\documentclass[makeidx]{amsart}
\usepackage{latexsym}
\usepackage{graphicx}
\usepackage{subfigure}
\usepackage{amsmath}
\usepackage{amssymb}
\usepackage{mathrsfs}
\usepackage{epstopdf}
\DeclareGraphicsRule{.tif}{png}{.png}{\xi`convert #1 `basename #1
.tif`.png}

\newtheorem{theorem}{Theorem}[section]
\newtheorem{lemma}[theorem]{Lemma}
\newtheorem{prop}[theorem]{Proposition}
\newtheorem{corollary}[theorem]{Corollary}
 \newtheorem{definition}[theorem]{Definition}
 \newtheorem{note}[theorem]{Note}

\newenvironment{remark}{\noindent \textbf{Remark}.}{\hfill $\square$}

\renewcommand{\Re}{\mathop{\rm Re}\nolimits}
\renewcommand{\Im}{\mathop{\rm Im}\nolimits}

\newcommand{\sgn}{\mathop{\rm sgn}\nolimits}

\numberwithin{equation}{section} \makeindex
\begin{document}
\author{ J. Ye$^1$ and S.  Tanveer$^2$  }
\title{Global solutions for two-phase
Hele-Shaw bubble for a near-circular initial shape}

\gdef\shortauthors{ J. Ye \& S. Tanveer  } \gdef\shorttitle{Global
solutions to the bubble}
\thanks{$1$.  Department of Mathematics, Ohio State University, Columbus, OH 43210 (jenny$_{-}$yeyj@math.ohio-state.edu).}
\thanks{$2$.  Department of Mathematics, Ohio State University, Columbus, OH 43210 (tanveer@math.ohio-state.edu).}
\maketitle

\today

\bigskip
\begin{abstract} Using a vortex sheet method we prove global
existence of a near circular initial bubble in a Hele-Shaw cell with
surface tension and generally finite nonzero viscosity ratio between
fluids inside and
outside the bubble. The circular shape is shown to be
asymptotically stable for all sufficiently smooth small perturbation.
The initial condition in this case, while smooth, need not be analytic.
\end{abstract}

\section{Introduction.}
The displacement of a more viscous fluid by a less viscous fluid in a
Hele-Shaw cell has been a problem of considerable physical as well
as mathematical interest. Over the years, many reviews have appeared
from a range of perspectives (Saffman \cite{PG2}; Bensimon {\it {et
al.}} \cite{BE1}; Homsy \cite{HO1}; Pelce \cite{PE1}; Kessler {\it
et al.} \cite{KE1}; Tanveer \cite{Tanveer3} and \cite{Tanveer1};
Hohlov \cite{HY1}; Howison \cite{HOW2} and \cite{HO}). An important
issue is the stability of steadily propagating shapes such as a
semi-infinite Saffman-Taylor finger \cite{PG1}, or a finite
translating bubble. Although there has been a lot of formal
asymptotic \cite{Tanveer2} as well as numerical computations
\cite{MC1}, some issues of stability have not been completely
resolved without controversy.

Rigorous mathematical tools for investigation of global solutions
and nonlinear stability problem are still quite limited. Thus far,
global stability of near circular analytic shapes has been
established by
P. Constantin and M. Pugh \cite{PM}
for one-phase Hele-Shaw bubble in the absence of pressure gradient
or injection of less viscous fluid.

The present  paper generalizes these results for nonzero viscosity
ratio between interior and exterior fluids for nonanalytic but
sufficient smooth shapes. Unlike \cite{PM}, where conformal mapping
is used, this paper relies on a vortex sheet equal-arclength
formulation, originally due to Hou, Lowengrub, and Shelley
\cite{HL1}. This is particularly advantageous for study of interface
motion between fluids with nonzero viscosities. Ambrose \cite{AD3}
used this formulation to prove local existence of Hele-Shaw solution
for general  initial shapes \cite{AD3} without surface tension;
earlier Duchon and Robert \cite{DR1} obtained local existence
results with surface tension in a differing formulation for
one-phase Hele-Shaw problem.

In this paper, Ambrose's approach has been extended suitably to
obtain global existence and stability results for smooth but non-analytic
near-circular initial shapes in two phase Hele-Shaw flow with surface tension.

In the equal arc-length vortex sheet formulation,
the boundary curve between the two fluids of differing viscosity is
described parametrically at any time $t$ by $z=x(\alpha,t)+iy(\alpha,t)$.
$\alpha$ is chosen so that
$z(\alpha+2
\pi,t)=z(\alpha,t)$. We define $\theta$ so that
$\alpha+\theta$ for the angle formed between the tangent to the curve and
the horizontal ($x$-axis), as the boundary is tranversed counter-clockwise
with increasing $\alpha$.
Hou, Lowengrub and Shelley in \cite{HL2} observed that
a
choice\footnote{This choice or any other choice of
tangential speed of points on the interface has no effect on the interface
shape itself.}
of the tangent velocity $T$
is possible so that $s_\alpha$ is independent of $\alpha$, where $s$ is
the arc-length. They also observed that this choice
simplifies the evolution equation for $\theta$.

It is convenient to introduce the map $\Phi:\mathbb{R}^2\rightarrow
\mathbb{C}$ by $\Phi(a,b)= a+ib$. Then
the velocity ${\bf W}$ (see \cite{HL1}) generated by a vortex sheet on
the boundary of strength $\gamma (\alpha)$ is given by the
Birkhoff-Rott integral which has
the complex representation:
\begin{eqnarray}
\label{1.1}
[\Phi({\bf W})]^{\ast}=\frac{1}{2\pi
i}PV\int_0^{2\pi}\frac{\gamma(\alpha')}{z(\alpha)-z(\alpha')}d\alpha'.
\end{eqnarray}
The unit tangent and normal vectors to the curve clearly satisfy
\begin{displaymath}
\Phi({\bf t})=\frac{2\pi z_\alpha}{L},\,\,\Phi({\bf n})=\frac{2\pi i
z_\alpha}{L}.
\end{displaymath}
The normal velocity $U(\alpha,t)$ of the curve is given by
\begin{equation}
\label{1.2}
U(\alpha,t)={\bf W}\cdot {\bf n}.
\end{equation}

The equations for the evolution of a Hele-Shaw interface in the
infinite domain with surface tension accounted for are given by (see
\cite{HL1})
 \begin{equation*}
 \left\{\begin{aligned}
\theta_t(\alpha,t)&=\frac{2\pi}{L}U_\alpha(\alpha,t)+\frac{2\pi}{L}T(\alpha,t)\big(1+\theta_\alpha(\alpha,t)\big),\\
L_t(t)&=-\int_0^{2\pi}\big(1+\theta_\alpha(\alpha,t)\big)
U(\alpha,t) d\alpha,
\end{aligned}\right.\tag{A.1}\end{equation*}
with
\begin{equation*}
\left\{\begin{aligned} \gamma(\alpha,t)&=-\frac{L}{\pi} A_{\mu} {\bf
W}\cdot {\bf
t}+\frac{2\pi}{L}\sigma \theta_{\alpha \alpha},\\
T(\alpha,t)&=\int_0^{\alpha}
\big(1+\theta_{\alpha'}(\alpha',t)\big)U(\alpha',t)
d\alpha'-\frac{\alpha}{2\pi} \int_0^{2\pi}
\big(1+\theta_{\alpha}(\alpha,t)\big)U(\alpha,t) d\alpha
,\end{aligned}\right.\tag{A.2}
\end{equation*}
where \begin{displaymath} A_{\mu}=\frac{\mu_1-\mu_2}{\mu_1+\mu_2},
\end{displaymath}
$\mu_1$ is the viscosity of the exterior fluid, $\mu_2$ is the
viscosity of the interior fluid, and $\sigma$ is the coefficient of
surface tension. The initial condition is given by
\begin{eqnarray}
\label{1.3} \theta(\alpha,0)=\theta_0(\alpha),\,\,L(0)=2\pi.
\end{eqnarray} In order that $z_\alpha = \frac{L}{2\pi} \exp \left [ i \alpha +
i \theta \right ]$, the specified $\theta_0 (\alpha) $ must satisfy
the the consistency condition
\begin{equation}
\label{1.3.1} \int_0^{2 \pi} \exp \left [ i \alpha + i \theta_0
(\alpha) \right ] d\alpha = 0.
\end{equation}
\begin{definition}
\label{def1.1}
Let $s\geq0$. The Sobolev
space $H^s\big(\mathbb{T}[0,2\pi]\big)$ is the set of all
$2\pi$-periodic function $f=\sum_{-\infty}^{\infty}\hat{f}(k)
e^{ik\alpha}$ such that
\begin{displaymath}
\|f\|_s=\sqrt{\sum_{k=-\infty}^{\infty}|k|^{2s}|\hat{f}(k)|^2+|\hat{f}(0)|^2}<\infty.
\end{displaymath}
\end{definition}
\begin{note}
\label{note1.1} For $f, g \in H^s
\left ( \mathbb{T} [0, 2 \pi ] \right ) $, the Banach Algebra property
$ \| f g \|_s \le C_s \| f \|_s \| g \|_s $ for $s \ge
1 $ for some constant $C_s$ depending on $s$ is easily proved and will
be useful in the sequel.
\end{note}
\begin{definition}
\label{def1.2}
The Hilbert transform, $\mathcal{H}$, of a function
$f \in H^0 \left ( \mathbb{T} [0, 2\pi] \right )$ with Fourier Series
$f=\sum_{-\infty}^{\infty}\hat{f}(k) e^{ik\alpha}$ is given by
\begin{eqnarray}
\mathcal{H}[f] (\alpha)&=&\frac{1}{2
\pi}PV\int_{0}^{2\pi}f(\alpha')\cot{\frac{1}{2}(\alpha-\alpha')}d\alpha'\nonumber\\
&=&\sum_{k\neq 0}-i\sgn(k)\hat{f}(k) e^{ik\alpha}.\nonumber
\end{eqnarray}
\end{definition}

\begin{note}
\label{note1.3}
The Hilbert transform commutes with differentiation. We will
denote derivative with respect to $\alpha$, either by $D$ or subscript
$\alpha$. Also, for the sake of brevity of notation, the time $t$ dependence
will often be omitted, except where it might cause
confusion otherwise.
\end{note}

\begin{definition}
\label{def1.4} We define the operator $\Lambda$ to be a derivative
followed by the Hilbert transform: $\Lambda=\mathcal{H}D$.
\end{definition}
\begin{note}\label{note1.4}
In
the Fourier representation, we have $(\Lambda f)_k= |k|\hat{f}(k)$. This
implies that
\begin{displaymath} \Big(\int_0^{2 \pi}\big(
f^2+f\Lambda f\big)d\alpha\Big)^{1/2}
\end{displaymath}
is  equivalent  to $H^{1/2}\big(\mathbb{T}[0,2\pi]\big)$ norm for a
real-valued $2\pi$-periodic function $f$. Further, if ${\hat f} (0)
=0$, then it is easily seen that $\left ( \int_0^{2\pi} f \Lambda f
d\alpha \right )^{1/2} = \| f \|_{1/2} $. Note that operator
$\Lambda$ is self-adjoint in $H^{1/2}\big(\mathbb{T}[0,2\pi]\big)$.
\end{note}
\begin{definition}
\label{def1.5}  Following Ambrose \cite{AD3}, we define commutator
$$ [\mathcal{H},f]g =
\mathcal{H}(fg)-f\mathcal{H}(g).$$ We also define the linear
integral operator $\mathcal{K} [z]$, depending on $z$, as
$$ \left (  \mathcal{K} [z] f  \right ) (\alpha) = \frac{1}{2 \pi i} \int_{\alpha-\pi}^{\alpha+\pi} f(\alpha')
\left [ \frac{1}{z (\alpha)-z(\alpha') } - \frac{1}{2 z_\alpha (\alpha') } \cot \frac{1}{2} (\alpha-\alpha') \right ] d\alpha'.
$$
\end{definition}

\begin{remark} For $2 \pi$-periodic functions $f$ and $z$, it is clear that the upper and lower
limits of the integral above can be replaced by $a$ and $a+2\pi$ respectively for arbitrary $a$. Further,
in terms of operators $\left [ \mathcal{H}, \frac{1}{z_\alpha} \right ]$ and  $\mathcal{K}$,
we may express
${\bf W}$ in the following form
(see \cite{AD1}):
\begin{eqnarray}
\label{2.1} [\Phi({\bf W})]^{\ast}=\frac{1}{2i} \left [ \mathcal{H}
, \frac{1}{z_\alpha} \right ] \gamma + \frac{1}{2i z_\alpha}
\mathcal{H} \gamma + \mathcal{K}[z] \gamma.
\end{eqnarray}
\end{remark}

\begin{definition}
\label{def2.2}
We define a complex operator $\mathcal{G}[z]$, depending on $z$, so that
\begin{equation}
\label{2.5.0} \mathcal{G}[z]\gamma =
z_\alpha\Big[\mathcal{H},\frac{1}{z_\alpha}\Big]\gamma +2i
z_\alpha\mathcal{K}\big[z\big]\gamma.
\end{equation}
It is also convenient to define a related
real operator $\mathcal{F} [z]$, depending on $z$, so that
\begin{equation}
\label{2.5} \mathcal{F} [z] \gamma = \Re \left ( \frac{z_\alpha
(\alpha)}{\pi i}  PV \int_0^{2 \pi} \frac{\gamma (\alpha')
}{z(\alpha)-z(\alpha')} d \alpha' \right ) = \Re \Big ( \frac{1}{i}
\mathcal{G} [z] \gamma \Big ).
\end{equation}
\end{definition}
From the expressions for $U$ and ${\bf W} \cdot {\bf t}$, it follows that
\begin{equation}
\label{Ueqn} U = \frac{\pi}{L} \mathcal{H} [\gamma] + \frac{\pi}{L}
\Re \left ( \mathcal{G} [z] \gamma \right ),
\end{equation}
\begin{equation}
\label{Wteqn}
{\bf W} \cdot {\bf t} = \frac{\pi}{L}\mathcal{F} [z] \gamma.
\end{equation}

\begin{definition}
\label{def1.6}
We introduce the projection operator $\mathcal{Q}_1$ so that
\begin{displaymath}
\left [ \mathcal{Q}_1 f \right ] (\alpha)=
f(\alpha)-\hat{f}(0)-\hat{f}(1)e^{i\alpha}-\hat{f}(-1)e^{-i\alpha},
\end{displaymath}
where $f=\sum_{-\infty}^{\infty}\hat{f}(k) e^{ik\alpha}$.
In general, ${\hat .}$ symbol will be reserved for Fourier components.
Further, we will denote
$\tilde\theta=\mathcal{Q}_1\theta$.
\end{definition}
\begin{definition}
\label{def1.10} We define $\dot{H}^s$ as the subspace of
$H^s\left(\mathbb{T}[0,2\pi]\right)$ containing real-valued
functions so that $\phi\in\dot{H}^s$ implies
$\mathcal{Q}_1\phi=\phi$. Note $\|\phi\|_s=\|D^s\phi\|_0$ for
$s\ge1$.
\end{definition}

 The significantly new aspect of the present paper
include a vortex sheet formulation equivalent to the evolution
system (A.1)-(A.2) with initial condition (\ref{1.3}) that projects
away the neutral linear modes so that exponentially decay of the
remaining Fourier modes helps control small nonlinearity.  The
equivalent system involves the evolution of $\tilde\theta$,
$\hat\theta(0)$ and $L$, where $\hat\theta(1)$ and $\hat\theta(-1)$
are determined as complex functionals of $\tilde\theta$.

 We
analyze the evolution of $\tilde\theta$ and $L$. We first form
Galerkin approximation in a finite dimensional space for the two
evolution equations. We then show that solutions to these
approximate equations exist by using the Picard theorem for
differential equations in Banach spaces.

We then define  energy
$E(t)=\frac{1}{2}\big\|D_\alpha^r\tilde\theta(\cdot,t)\big\|_0^2$
for $r\geq4$. We estimate its growth and find that if
$\big\|\mathcal{Q}_1\theta_0\big\|_r$ is small enough, then there
exists a positive constant $A$, which for concreteness is chosen to
be $\frac{\sigma}{18}$, so that
\begin{eqnarray}
\label{1.4}
\frac{dE}{dt}\leq -AE.
\end{eqnarray}

The estimate (\ref{1.4}) holds for  Galerkin approximate equations
and is independent of the truncation $n$. The exponential decay
estimates on $E(t)$ implied by this inequality help continue the
solution of the Galerkin approximation to arbitrary time. Further
estimates show that $\left \{ {\tilde \theta}_n \right
\}_{n=2}^\infty $ form a Cauchy sequence in $\dot{H}^1 $, which is
used to show that that ${\tilde \theta}_n $ converges to a strong
solution ${\tilde \theta}$ of the original system, which also decays
exponentially. The functional relation determining ${\hat \theta}
(1)$ and ${\hat \theta} (-1)$ shows that $\theta -{\hat \theta} (0)$
also decays exponentially in time, which implies that circular
shapes are asymptotically stable.

The main result in this paper is the following Theorem:
\begin{theorem}
\label{Thm1.7} There exists $\epsilon>0$ such that for $r\ge4$, if $\|\mathcal{Q}_1\theta_0\|_r<\epsilon$, then there
exists  a global  solution $(\theta,L)\in H^r\left(\mathbb{T}[0,2\pi]\right)\times\mathbb{R}$, which satisfies  (A.1)-(A.2)
with initial condition (\ref{1.3}).  Further,
$\|\tilde\theta\|_r$, $\hat\theta (1)$ and $\hat\theta (-1)$ each
decay exponentially as $t \rightarrow \infty$, $|\hat\theta(0)|$
remains finite, while $L$ approaches $2\sqrt{\pi\mathcal{S}}$,
$\mathcal{S}$ being the  area of the bubble, which is invariant with
time. Thus a near circular bubble is asymptotically stable for
sufficiently small distortions in the
$H^r\big(\mathbb{T}[0,2\pi]\big) $ space.
\end{theorem}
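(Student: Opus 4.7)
The plan is to follow the strategy outlined just before the theorem statement, namely to recast the problem as an evolution for $\tilde\theta$ and $L$ (with $\hat\theta(\pm 1)$ slaved to $\tilde\theta$ through the consistency condition (\ref{1.3.1}) and $\hat\theta(0)$ evolving passively), then to solve the resulting system by a Galerkin method controlled by a single high-order energy estimate. Concretely, I first decompose $\theta = \hat\theta(0) + \hat\theta(1) e^{i\alpha} + \hat\theta(-1) e^{-i\alpha} + \tilde\theta$. Differentiating (\ref{1.3.1}) in time and using reality $\overline{\hat\theta(1)} = \hat\theta(-1)$, the implicit function theorem gives $\hat\theta(\pm 1)$ as smooth functionals of $\tilde\theta$, well defined whenever $\|\tilde\theta\|_r$ is small. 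Substituting into (A.1)--(A.2) yields a closed evolution for the triple $(\tilde\theta, \hat\theta(0), L)$ in which the $\tilde\theta$-equation has the form $\tilde\theta_t = -\frac{(2\pi)^3 \sigma}{L^3}\Lambda^3 \tilde\theta + \mathcal{N}[\tilde\theta,L]$, the leading linear part providing parabolic smoothing on the range of $\mathcal{Q}_1$ precisely because the neutral modes $k=0,\pm 1$ have been projected out.

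Next, I would form the Galerkin truncation $\tilde\theta_n = \sum_{2\le|k|\le n}\hat\theta(k) e^{ik\alpha}$, projecting the right-hand side of (A.1)--(A.2) onto the same finite set of Fourier modes. Each truncation is a system of ODEs in the finite-dimensional Banach space $\mathcal{Q}_1 \dot{H}^r$ (times $\mathbb{R}$ for $L$ and $\hat\theta(0)$) with locally Lipschitz right-hand side, so the Picard theorem produces a unique local-in-time solution $(\tilde\theta_n,\hat\theta_n(0),L_n)$. The central analytic step is to prove the energy inequality (\ref{1.4}) for these truncated solutions, with a constant $A$ independent of $n$: writing $E_n(t) = \frac{1}{2}\|D^r\tilde\theta_n\|_0^2$, differentiate in time and use (\ref{Ueqn})--(\ref{Wteqn}) together with the representation (\ref{2.1}) of $\bf W$ to split $\frac{dE_n}{dt}$ into a dominant dissipative contribution coming from the Laplacian-like term $-\frac{(2\pi)^3\sigma}{L^3}(D^r\tilde\theta_n, \Lambda^3 D^r\tilde\theta_n)_0 \le -c\|\tilde\theta_n\|_{r+3/2}^2$ (using Note \ref{note1.4} and self-adjointness of $\Lambda$), plus remainder terms coming from the commutator $[\mathcal{H},1/z_\alpha]$, the operator $\mathcal{K}[z]$, the tangential velocity $T$, and the nonlinear correction coming from slaving $\hat\theta(\pm 1)$ to $\tilde\theta$. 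Each remainder is estimated by the Banach algebra property (Note \ref{note1.1}), standard commutator estimates in the spirit of Ambrose \cite{AD3}, and the fact that these contributions are at least quadratic in $\tilde\theta$. Choosing $\epsilon$ small enough that $\|\tilde\theta_n(\cdot,t)\|_r \le 2\epsilon$ ensures the remainders are bounded by $\epsilon\cdot \|\tilde\theta_n\|_{r+3/2}^2$, hence absorbed into the dissipation with room to spare, producing $\frac{dE_n}{dt}\le -\frac{\sigma}{18}E_n$. A standard continuation argument in $t$ then bootstraps this from a local to a global bound and yields global existence of each Galerkin solution together with $\|\tilde\theta_n(\cdot,t)\|_r \le Ce^{-At/2}\|\mathcal{Q}_1\theta_0\|_r$, uniformly in $n$.

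Having global, uniformly decaying approximate solutions, I would pass to the limit by showing that $\{\tilde\theta_n\}$ is Cauchy in $C([0,\infty);\dot{H}^1)$: the difference $w_{n,m} = \tilde\theta_n - \tilde\theta_m$ satisfies a perturbed linear equation whose lower-order energy $\|w_{n,m}\|_1^2$ can be controlled by the same type of estimates, gaining smallness either from $\min(n,m)\to\infty$ (through the truncation error) or from the exponential decay of the approximate solutions. The limit $\tilde\theta$ inherits the bound $\|\tilde\theta\|_r \le C e^{-At/2}\|\mathcal{Q}_1\theta_0\|_r$ by weak-$\ast$ lower semicontinuity, and the uniform regularity is enough to show that $(\tilde\theta,\hat\theta(0),L)$ is a strong solution of the reduced system, which then lifts to a solution of (A.1)--(A.2). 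The conclusions about $L$ and $\hat\theta(0)$ follow from the evolution equations directly: the area $\mathcal{S}$ is conserved along (A.1)--(A.2), which together with $L=\int_0^{2\pi}|z_\alpha|d\alpha$ and the decay of $\tilde\theta$ forces $L(t)\to 2\sqrt{\pi\mathcal{S}}$; the decay of $\hat\theta(\pm 1)$ comes from the functional relation $\hat\theta(\pm 1)=\Psi_{\pm}(\tilde\theta)$ with $\Psi_\pm(0)=0$ and $\Psi_\pm$ Lipschitz on a neighborhood of $0$; and boundedness of $\hat\theta(0)$ follows from its time derivative being an integral of quantities that decay exponentially.

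The main obstacle is the uniform energy estimate $\frac{dE}{dt}\le -AE$: the equation for $\theta$ is genuinely third-order through the surface-tension term $\frac{2\pi}{L}\sigma\theta_{\alpha\alpha}$ inside $\gamma$, so the dissipation one extracts from $(D^r\tilde\theta,D^r U_\alpha)_0$ lives only half a derivative above $D^r\tilde\theta$, and the commutator estimates for $[\mathcal{H},1/z_\alpha]$, $\mathcal{K}[z]$, and the analogous terms generated by the slaving of $\hat\theta(\pm 1)$ must all be performed carefully enough that no remainder consumes more than $\|\tilde\theta\|_{r+3/2}^{2}$ worth of derivatives, with the remaining factors small in $\|\tilde\theta\|_r$. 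Once this delicate estimate is in place, the rest of the argument is, in principle, a direct adaptation of the Galerkin/Picard machinery used by Ambrose \cite{AD3} for local existence, combined with the exponential decay to propagate everything to $t=\infty$.
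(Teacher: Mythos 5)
Your overall architecture is the same as the paper's: slave $\hat\theta(\pm1)$ to $\tilde\theta$ by the implicit function theorem applied to the consistency condition (this is (B.4) and Proposition \ref{prop2.6}), evolve $(\tilde\theta,L)$ with $\hat\theta(0)$ passive, solve Galerkin truncations by Picard, prove an $n$-independent decay estimate for $E_n=\frac12\|D^r\tilde\theta_n\|_0^2$, pass to the limit via a Cauchy argument in $\dot{H}^1$, and read off the statements about $\hat\theta(\pm1)$, $L\to 2\sqrt{\pi\mathcal{S}}$ (area conservation) and $\hat\theta(0)$ exactly as in Lemma \ref{lem2.16} and Proposition \ref{prop2.18}. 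However, there is a genuine flaw in the way you propose to close the central energy estimate. You claim that, after extracting the dissipative term from $U_\alpha$, the remaining contributions --- including the one from the tangential velocity $T$ --- are ``at least quadratic in $\tilde\theta$'' and can therefore be bounded by $\epsilon\,\|\tilde\theta_n\|_{r+3/2}^2$ and absorbed by smallness of $\epsilon$. That is false for $T$: since $T_\alpha=(1+\theta_\alpha)U-\frac{1}{2\pi}\int_0^{2\pi}(1+\theta_\alpha)U\,d\alpha$ and $U\approx\frac{2\pi^2\sigma}{L^2}\mathcal{H}[\theta_{\alpha\alpha}]$ is linear in $\theta$ at leading order, $T$ is linear in $\tilde\theta$, and in Fourier variables its contribution to $\tilde\theta_t$ is $+\frac{4\pi^3\sigma}{L^3}|k|\hat\theta(k)$, i.e.\ anti-dissipative and of the \emph{same} order as the dissipation $-\frac{4\pi^3\sigma}{L^3}|k|^3\hat\theta(k)$ (so your reduced linear operator is $-\frac{4\pi^3\sigma}{L^3}(\Lambda^3-\Lambda)$, not $-\frac{(2\pi)^3\sigma}{L^3}\Lambda^3$). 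This term cannot be made small by shrinking $\epsilon$; in the energy it produces $+\frac{2\pi^2\sigma}{L_n^2}\|\tilde\theta_n\|_{r+1/2}^2$, which is a fixed fraction of the dissipation $-\frac{2\pi^2\sigma}{L_n^2}\|\tilde\theta_n\|_{r+3/2}^2$.

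The missing ingredient is the spectral-gap inequality that the paper uses in Lemma \ref{prel2.14}: because $\mathcal{Q}_1$ removes the modes $k=0,\pm1$, one has $\|\tilde\theta_n\|_{r+1/2}^2\le\frac14\|\tilde\theta_n\|_{r+3/2}^2$ (equivalently $|k|\le\frac14|k|^3$ for $|k|\ge2$), so the linear $T$-contribution is dominated by the dissipation with a definite margin ($-\frac{3\pi^2\sigma}{2L_n^2}\|\tilde\theta_n\|_{r+3/2}^2$ left over), and only the genuinely quadratic remainders --- the deviations $U-\frac{2\pi^2\sigma}{L^2}\mathcal{H}[\theta_{\alpha\alpha}]$ coming from $[\mathcal{H},1/\omega_\alpha]$, $\mathcal{K}[\omega]$ and the slaving (Lemma \ref{lemUTtheta}), plus $U\theta_\alpha$ and $T\theta_\alpha$ --- are absorbed by smallness of $\|\tilde\theta_n\|_{r-1}$. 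You invoke the projection of the neutral modes only to argue the dissipation is effective; as written, your absorption step for the $T$-term would fail, and with it the inequality $\frac{dE_n}{dt}\le-\frac{\sigma}{18}E_n$ on which the whole global continuation and decay argument rests. Once this is corrected the rest of your outline matches the paper's proof.
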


In \S 2, we introduce a modified evolution system (B.1)-(B.4) with
initial condition (\ref{2.4}), which is shown to be equivalent to
(A.1)-(A.2) with initial condition (\ref{1.3}). We formulate a Galerkin approximation (\ref{2.12}) and
show how Theorem \ref{Thm1.7} follows from Theorem \ref{Thm2.15},
Lemma \ref{lem2.16} and Proposition \ref{prop2.18}.

In \S 3, we prove several preliminary Lemmas. In \S 4, we prove {\it
a priori} estimates on the growth of solutions to the approximate
initial value problem (\ref{2.12}). In \S 5, first we use  a priori
estimates to prove global existence and uniqueness of solutions to
the Galerkin approximation (\ref{2.12}), then show the same to be
true for (B.1)-(B.4) with initial condition (\ref{2.4}). Finally, we
also show that $\|\tilde\theta\|_r$, for the solution to (B.1)-(B.4)
with initial condition (\ref{2.4}) decays exponentially in time.

\section{Equivalent evolution equations}
In this section,  we derive an  equivalent system of the evolution
equations, which  will be analyzed later in this paper. Much of the
difficulty in this problem is to  control the energy appropriately.
We find that an equivalent system  provides exponentially decaying
energy estimates, unlike the original system which contains
neutrally stable modes corresponding to bubble translation
degeneracy.

\begin{definition}
\label{def2.1}
We introduce functions
\begin{equation*}
\omega_0 (\alpha) = \int_0^\alpha e^{i \alpha'} d\alpha' ~~~~~\\,~~~
\omega(\alpha)=
\int_0^{\alpha}e^{i\alpha'+i\hat\theta(1)e^{i\alpha'}+i\hat\theta(-1)
e^{-i\alpha'}+i\tilde\theta(\alpha')}d\alpha'.
\end{equation*}
\end{definition}

\begin{remark}
It is readily checked that
\begin{equation}
\label{Fomega01}
\Re\Big(\frac{\omega_{0,\alpha}}{\pi}PV\int_0^{2\pi}\frac{f(\alpha')}
{\omega_0(\alpha)-\omega_0(\alpha')}d\alpha'\Big)= \mathcal{H}(f).
\end{equation}
From expression (\ref{2.5.0}) and (\ref{2.5}), it is also
easily checked that
if
$f(\alpha)
=\sum_{k=-\infty}^{\infty}\hat{f}(k) e^{ik\alpha}$, then we have
\begin{equation}
\label{omega0f}
\mathcal{G} [\omega_0] \gamma
= i {\hat f} (0),
\end{equation}
which  from (\ref{2.5}) implies
\begin{equation}
\label{Fomega0} \mathcal{F} \left [ \omega_0 \right ] f =
\hat{f}(0).
\end{equation}
\end{remark}

We will show that evolution system (A.1)-(A.2) is equivalent to the following
evolution system for
$\big(\tilde{\theta}(\alpha,t),L(t),\hat{\theta}(0;t)\big)$ with
$\tilde\theta(\alpha,t)=\sum_{k\neq0,\pm1}\hat{\theta}(k;t)e^{ik\alpha}$,
where
$\theta(\alpha,t)=\hat{\theta}(0;t)+
\hat\theta(-1;t)e^{-i\alpha}+\hat\theta(1;t)e^{i\alpha}
+\tilde\theta(\alpha,t)$:
\begin{equation*}
\left\{\begin{aligned}\frac{\partial
\tilde{\theta}(\alpha,t)}{\partial t}&=\frac{2\pi}{L}
\mathcal{Q}_1\big(U_{\alpha}+T(1+\theta_{\alpha})\big),\\
\frac{d L(t)}{d t}&=-\int_0^{2\pi}\big(1+\theta_{\alpha}\big)
Ud\alpha,
\end{aligned}\right.\tag{B.1}
\end{equation*}
\begin{equation*}\frac{d \hat{\theta}(0;t)}{d t}=\frac{1}{L}\int_0^{2\pi}
T(1+\theta_{\alpha})d\alpha,\tag{B.2}
\end{equation*}
with $\gamma(\alpha,t)$, $T(\alpha,t)$, $\hat{\theta}(1;t)$ and
$\hat{\theta} (-1;t)$ determined\footnote{Since $\theta(\alpha,t)$
is real valued, note ${\hat \theta}^{\ast} (1; t) = \theta (-1,
t)$.} by
\begin{equation*}\left\{\begin{aligned}
\gamma(\alpha,t)&=-\frac{L} {\pi}A_{\mu} {\bf W} \cdot {\bf t}
+\frac{2\pi}{L}\sigma \theta_{\alpha \alpha},\\
T(\alpha,t)&=\int_0^{\alpha}
\big(1+\theta_{\alpha'}(\alpha')\big)U(\alpha')
d\alpha'-\frac{\alpha}{2\pi} \int_0^{2\pi}
\big(1+\theta_{\alpha}(\alpha)\big)U(\alpha) d\alpha,
\end{aligned}\right.\tag{B.3}
\end{equation*}
\begin{equation*}
\int_0^{2\pi}\exp\Big(i\alpha+i\big(\hat{\theta}(-1;t)e^{-i\alpha}
+\hat{\theta}(1;t)e^{i\alpha}+\tilde\theta(\alpha,t)\big)\Big)d\alpha=0,\tag{B.4}
\end{equation*}
where
 $U$ and ${\bf W} \cdot {\bf t}$ are given by
\begin{eqnarray}
\label{2.2}
  U
&=&\Re\Big(\frac{\omega_\alpha(\alpha)}{L}\mbox{PV}\int_0^{2\pi}\frac{\gamma(\alpha')}
 {\omega(\alpha)-\omega(\alpha')}d\alpha'\Big) \\
&=& \frac{\pi}{L} \mathcal{H} [\gamma] + \frac{\pi}{L} \Re \left (
\omega_\alpha \left [ \mathcal{H}, \frac{1}{\omega_\alpha} \right ]
\gamma + 2 i \omega_\alpha \mathcal{K} [\omega] \gamma \right
),\nonumber
\end{eqnarray}
\begin{equation}
\label{2.3}
{\bf W} \cdot {\bf t} =
\Re\Big(\frac{\omega_\alpha(\alpha)}{L i }\mbox{PV}\int_0^{2\pi}\frac{\gamma(\alpha')}
 {\omega(\alpha)-\omega(\alpha')}d\alpha'\Big)  =
\frac{\pi}{L} \mathcal{F} [\omega] \gamma.
\end{equation}
 \begin{remark}
 The formulae for $U$ in (\ref{2.2}) and ${\bf W}\cdot{\bf t}$ in (\ref{2.3})
 are equivalent to those in (\ref{Ueqn}) and (\ref{Wteqn}) since
 $\hat{\theta}(0;t)$ and $L$ cancel out.
 \end{remark}

The appropriate initial condition is
\begin{eqnarray}
\label{2.4}
\tilde\theta(\alpha,0)=\mathcal{Q}_1\theta_0,\,\,L(0)=2\pi,
\,\,\hat{\theta}(0;0)=\hat{\theta}_0(0).
\end{eqnarray}

Note that the first equation in (B.3) can be rewritten  as
\begin{eqnarray}
\label{2.6}
\big(I+A_{\mu}\mathcal{F}[\omega]\big)\gamma=\frac{2\pi}{L}\sigma
\theta_{\alpha\alpha}.
\end{eqnarray}
\begin{note}
\label{note2.3}
Later we shall see that if
$\tilde\theta\in H^1\big(\mathbb{T}[0,2\pi]\big)$ and $\|\tilde\theta\|_1$ is sufficiently small, then $I+A_{\mu}\mathcal{F}[\omega]$ is
invertible from $\{u\in
H^0\big(\mathbb{T}[0,2\pi]\big)|\hat{u}(0)=0\}$ to itself for any $A_\mu
\in [-1,1]$.
More general results are available \cite{AD1}, \cite{BMO} for non self-
intersecting interface; however, since we need the sharper estimates
for near circular interface in any case, we construct a direct proof rather
than rely on the more general theorems.
\end{note}

\begin{definition}
\label{def2.4}
Let $r\geq 3$.
 We define an open ball $\mathcal{B}$:
 $$\mathcal{B}=\left\{u\in
 \dot{H}^r| \|u\|_r<\epsilon\right\}.$$
 We also define  open balls:
  \begin{align*}
  \mathcal{O}&=\left\{(u,v,w)\in \dot{H}^r\times \mathbb{R}^2\big|
 u\in\mathcal{B},\, |v-2\pi|+|w|<
  1\right\},\\
  \mathcal{V}&=\left\{(u,v)\in \dot{H}^r\times \mathbb{R}
\big|
 u\in\mathcal{B},\, |v-2\pi|<
  1 \right\},\\
  \mathcal{U}&=\left\{(u,v)\in H^r\big(\mathbb{T}[0,2\pi]\big)\times \mathbb{R}\big|\mathcal{Q}_1u\in\mathcal{B},\,\|u\|_r+|v-2\pi|<
  1\right\}.
  \end{align*}
\end{definition}
\begin{remark}
We choose  $\epsilon>0$ is small enough for Lemma \ref{lem2.16} to
apply.
\end{remark}

For (B.4), we also have the following result:
\begin{prop}
\label{prop2.6} There exists $\epsilon_1>0$ so that  (B.4)
implicitly defines a unique $C^1$ function $g:\big\{u\in
\dot{H}^1|
\|u\|_1<\epsilon_1\big\}\rightarrow \mathbb{R}^2$ satisfying
$\big(\Re\hat\theta(1),\Im\hat{\theta}(1)\big)=g(\tilde\theta)$
and $g(0)=0$. Further, $g$ satisfies the following estimates  for
all $u, u_1,u_2\in\big\{u\in
\dot{H}^1|
\|u\|_1<\epsilon_1\big\}$:
\begin{eqnarray}
\label{2.7}
|g(u)|&\leq &\frac{1}{2}\|u\|_1,\\
|g(u_1)-g(u_2)|&\leq& \frac{1}{2}\|u_1-u_2\|_1.
\label{2.8}
\end{eqnarray}
\end{prop}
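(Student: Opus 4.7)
The plan is to apply the Banach-space implicit function theorem. Using reality of $\theta$, parametrize $\hat\theta(1)=a_1+ia_2$ with $a_1,a_2\in\mathbb{R}$ so that
$$\hat\theta(-1)e^{-i\alpha}+\hat\theta(1)e^{i\alpha}=2a_1\cos\alpha-2a_2\sin\alpha,$$
and recast (B.4) as $F(a_1,a_2,u)=0$, where
$$F(a_1,a_2,u):=\int_0^{2\pi}\exp\bigl[i\alpha+i(2a_1\cos\alpha-2a_2\sin\alpha)+iu(\alpha)\bigr]\,d\alpha,$$
with $u$ playing the role of $\tilde\theta$. One checks $F(0,0,0)=\int_0^{2\pi}e^{i\alpha}d\alpha=0$. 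The Banach algebra property of $H^1$ (Note \ref{note1.1}), together with the Taylor expansion of $e^{iu}$, shows $F$ is real-analytic (and in particular $C^1$) on $\mathbb{R}^2\times\dot{H}^1$.

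Identifying $\mathbb{C}\simeq\mathbb{R}^2$, the partial Jacobian at the origin is computed directly:
$$\partial_{a_1}F(0,0,0)=2i\int_0^{2\pi}\cos\alpha\,e^{i\alpha}\,d\alpha=2\pi i,\qquad \partial_{a_2}F(0,0,0)=-2i\int_0^{2\pi}\sin\alpha\,e^{i\alpha}\,d\alpha=2\pi,$$
so $D_{(a_1,a_2)}F(0,0,0)$ corresponds to the matrix $\bigl(\begin{smallmatrix}0&2\pi\\ 2\pi&0\end{smallmatrix}\bigr)$ of determinant $-4\pi^2\neq 0$. The Banach-space implicit function theorem thus yields $\epsilon_1>0$ and a unique $C^1$ map $g:\{u\in\dot{H}^1:\|u\|_1<\epsilon_1\}\to\mathbb{R}^2$ with $g(0)=0$ and $F(g(u),u)=0$.

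For the quantitative bounds (\ref{2.7})--(\ref{2.8}), the key observation is that $Dg(0)=0$. By implicit differentiation,
$$Dg(0)=-[D_{(a_1,a_2)}F(0,0,0)]^{-1}\circ D_uF(0,0,0),$$
and for any $v\in\dot{H}^1$,
$$D_uF(0,0,0)\cdot v=i\int_0^{2\pi}v(\alpha)\,e^{i\alpha}\,d\alpha=2\pi i\,\hat v(-1)=0,$$
since $v\in\dot{H}^1$ forces $\hat v(-1)=0$ by Definition \ref{def1.10}. This is precisely where the projection $\mathcal{Q}_1$ pays off: the linearization of (B.4) is degenerate on the neutral modes, but trivially zero once those are removed. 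Because $g$ is $C^1$, $u\mapsto Dg(u)$ is continuous, so after shrinking $\epsilon_1$ we may ensure $\|Dg(u)\|_{\mathrm{op}}\leq \tfrac12$ throughout the ball. The mean value inequality then gives (\ref{2.8}) directly, and specializing $u_2=0$ (with $g(0)=0$) yields (\ref{2.7}).

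The only non-mechanical step is the vanishing of $D_uF(0,0,0)$ on $\dot H^1$; verifying smoothness of $F$ and computing the scalar Jacobian are routine. I anticipate no serious obstacle, since the projection onto $\dot H^1$ has been set up precisely so that this implicit function argument produces a Lipschitz constant arbitrarily smaller than $1$.
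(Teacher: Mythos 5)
Your proposal is correct and follows essentially the same route as the paper: the implicit function theorem applied to the same functional $F(r_1,r_2,u)$, invertibility of $D_vF(0,0)$, and the bounds (\ref{2.7})--(\ref{2.8}) deduced from $Dg(0)=0$ plus continuity of $Dg$ and the mean value inequality on a (convex) small ball. In fact you make explicit the one point the paper leaves implicit, namely that $D_uF(0,0,0)$ vanishes on $\dot{H}^1$ because $\hat v(-1)=0$ there, which is exactly why $Dg(0)=0$.
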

\begin{remark}
  Having determined $\gamma$,  $\hat\theta(1)$ and $\hat\theta(-1)$,
(\ref{2.2}) and the second equation in (B.3) determine $U$ and
 $T$ needed in (B.1) and (B.2).
\end{remark}
\begin{lemma}
\label{lem2.7}
   If $(\theta, L)\in C\big([0,S];\mathcal{U}\big)$ with $\theta$ real-valued is the solution of the evolution equations (A.1), where
    $\gamma$, $T$ and $U$ are determined by (A.2) and (\ref{1.2}) with  initial
condition (\ref{1.3}), then
$\big(\tilde{\theta}=\mathcal{Q}_1\theta,L,\hat{\theta}(0)\big)$
will satisfy the equations (B.1) and (B.2) where   $\gamma$, $T$,
$\hat{\theta}(\pm1)$ and $U$ are determined by  (B.3), (B.4) and
(\ref{2.2}) with initial condition (\ref{2.4}) for $t\in[0,S]$.

Conversely, if $\big(\tilde{\theta},L,\hat{\theta}(0)\big)\in
C\big([0,S];\mathcal{O}\big)$ is the solution of the system (B.1)
and (B.2) where   $\gamma$, $T$, $\hat{\theta}(\pm1)$ and $U$ are
determined by  (B.3), (B.4) and (2.4) with  initial condition
(\ref{2.4}), then   $\theta =\tilde{\theta}+\hat{\theta}(0)
+\hat{\theta}(1)e^{i\alpha}+\hat{\theta}(-1)e^{-i\alpha}$ is a
real-valued function and $(\theta,L)$  satisfies the system (A.1)
for $t \in [0, S]$ with initial condition (\ref{1.3}),   where
    $\gamma$, $T$ and $U$ are determined by (A.2) and (1.2).
\end{lemma}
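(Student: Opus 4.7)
The plan is as follows. For the forward implication, I would begin by projecting the first equation of (A.1) with the Fourier cutoff $\mathcal{Q}_1$, which commutes with $\partial_t$ and so gives the first equation of (B.1) at once. Averaging the same equation over $[0,2\pi]$ and using $\int_0^{2\pi} U_\alpha\,d\alpha = 0$ produces (B.2), while the $L_t$ equation in (B.1) and the formulas for $\gamma$ and $T$ in (B.3) are copied verbatim from (A.1)--(A.2). To recover the representations (\ref{2.2})--(\ref{2.3}) from (\ref{Ueqn})--(\ref{Wteqn}) I would use $z_\alpha = (L/2\pi)\,e^{i\hat\theta(0)}\,\omega_\alpha$, so that $z(\alpha)-z(\alpha') = (L/2\pi)\,e^{i\hat\theta(0)}\,(\omega(\alpha)-\omega(\alpha'))$ and the constant prefactor cancels inside the Birkhoff--Rott kernel.

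The only nontrivial part of the forward direction is establishing (B.4). I would set $Q(t) = \int_0^{2\pi} e^{i\alpha+i\theta(\alpha,t)}\,d\alpha$ and differentiate in $t$. A short manipulation using (A.1), integration by parts on the $U_\alpha$ term, and the identity $T_\alpha = (1+\theta_\alpha)U + L_t/(2\pi)$ (immediate from the definition of $T$ together with the $L_t$ equation) makes the two $U$-dependent contributions cancel, leaving the linear ODE $\dot Q = -(L_t/L)\,Q$. Since $Q(0) = 0$ by the consistency condition (\ref{1.3.1}), uniqueness gives $Q(t) \equiv 0$, which is (B.4).

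For the converse, given $(\tilde\theta, L, \hat\theta(0)) \in C([0,S];\mathcal{O})$ solving (B.1)--(B.4), I would invoke Proposition \ref{prop2.6} to produce $\hat\theta(\pm 1)$ as $C^1$ functionals of $\tilde\theta$; reality of $\tilde\theta$ together with the uniqueness part of Proposition \ref{prop2.6} forces $\hat\theta(-1) = \overline{\hat\theta(1)}$, so the reconstructed $\theta = \hat\theta(0) + \hat\theta(1)e^{i\alpha} + \hat\theta(-1)e^{-i\alpha} + \tilde\theta$ is real-valued. The $L_t$ equation of (A.1) is identical to that of (B.1). For the $\theta_t$ equation, writing $R := (2\pi/L)\bigl(U_\alpha + T(1+\theta_\alpha)\bigr)$, the Fourier modes outside $\{-1,0,1\}$ agree with $\tilde\theta_t$ by (B.1) and the mode-$0$ coefficient agrees with $\dot{\hat\theta}(0)$ by (B.2).

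The main obstacle is to match the $\pm 1$ Fourier modes, because (B.1)--(B.2) do not prescribe $\dot{\hat\theta}(\pm 1)$ directly. My plan is to differentiate (B.4) in $t$, which after using (B.4) itself reduces to the single complex linear constraint $\int_0^{2\pi} e^{i\alpha+i\theta}\,\theta_t\,d\alpha = 0$; this determines $\dot{\hat\theta}(\pm 1)$ uniquely in terms of $\tilde\theta_t$ and $\dot{\hat\theta}(0)$, via the same non-degeneracy that underlies the implicit function theorem in Proposition \ref{prop2.6}. On the other hand, repeating the computation of the second paragraph with $R$ in place of $\theta_t$ gives $\int_0^{2\pi} e^{i\alpha+i\theta}\,R\,d\alpha = -(L_t/L)\,Q = 0$, so $\hat R(\pm 1)$ also satisfies the constraint. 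By uniqueness $\dot{\hat\theta}(\pm 1) = \hat R(\pm 1)$, so $\theta_t = R$ in every Fourier mode, completing (A.1).
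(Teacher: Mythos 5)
Your proposal is correct and takes essentially the same route as the paper: the forward direction is the same ODE argument $Q'=-(L_t/L)Q$, $Q(0)=0$ from the consistency condition (\ref{1.3.1}), and your converse — differentiating (B.4), showing $\int_0^{2\pi}e^{i\alpha+i\theta}R\,d\alpha=0$ by the same integration-by-parts identity, and matching the $\pm1$ modes via the dominance of $\hat{c}(1)$ together with the complex-conjugate structure — is precisely the paper's argument leading to (\ref{2.9}) and the fact that $a_1\eta+a_2\eta^{\ast}=0$ forces $\eta=0$ when $|a_1|\neq|a_2|$.
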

\begin{proof}
Let $(\theta, L)\in C\big([0, S];\mathcal{U}\big)$ be the solution
of the evolution equations (A.1)  where
    $\gamma$, $T$ and $U$ are determined by (A.2) and (\ref{1.2})
with initial condition (\ref{1.3}). Then
 we define
$p(t)=\int_0^{2\pi} e^{i\alpha+i\theta(\alpha,t)}d\alpha$.  From the
consistency condition (\ref{1.3.1}),
$p(0)=0$. We consider
$$p'(t)=i\int_0^{2\pi}e^{i\alpha+i\theta(\alpha,t)}\theta_t
d\alpha. $$ Substituting for $\theta_t$ from (A.1), and using the
identity $(e^{i\alpha+i\theta})_{\alpha}=i(1+\theta_\alpha)
e^{i\alpha+i\theta}$, we have
$$ p'(t)=\frac{2\pi}{L}\int_0^{2\pi}\big[iU_\alpha e^{i\alpha+i\theta}+T(e^{i\alpha+i\theta})_\alpha\big]d\alpha.$$
We integrate the last  term by parts; we use (A.2) to substitute for
$T_\alpha$. There is no boundary term from integrating by parts
since $T$ and $e^{i\alpha+i\theta}$ are periodic. We have
\begin{eqnarray}
 p'(t)&=&\frac{2\pi}{L}\int_0^{2\pi}\big(iU_\alpha e^{i\alpha+i\theta}-(1+\theta_\alpha) U e^{i\alpha+i\theta}
 -\frac{1}{2\pi} L_t e^{i\alpha+i\theta}\big)d\alpha\nonumber.
 \end{eqnarray}
Since $iU_\alpha e^{i\alpha+i\theta}-(1+\theta_\alpha) U
e^{i\alpha+i\theta}=( i U e^{i\alpha+i\theta})_\alpha$, we have
$$p'=-\frac{L_t}{L}p.$$
Note that $(\theta,L)\in\mathcal{U}$ implies that $L>2\pi-1>0$.
Furthermore, $L_t$ is continuous in $[0, S]$ from (A.1). So $p(t)=0$
is the unique solution to the above ordinary differential equation
with $p(0)=0$ for $t\in[0,S]$. Hence
$$e^{i\hat{\theta}(0)}\int_0^{2\pi}\exp\Big(i\alpha+i\big(\hat{\theta}(-1;t)e^{-i\alpha}+\hat{\theta}
(1;t)e^{i\alpha}+\tilde\theta(\alpha,t)\big)\Big)d\alpha=0,$$ implying
 $$\int_0^{2\pi}\exp\Big(i\alpha+i\big(\hat{\theta}(-1;t)e^{-i\alpha}+\hat{\theta}
(1;t)e^{i\alpha}+\tilde\theta(\alpha,t)\big)\Big)d\alpha=0 \mbox{
 for  }t\in[0, S].$$ Thus
$\big(\tilde{\theta}=\mathcal{Q}_1\theta,
 L,\hat{\theta}(0)\big)$ satisfies
the  equations (B.1) and (B.2) where   $\gamma$, $T$,
$\hat{\theta}(\pm1)$ and $U$ are determined by (B.3), (B.4) and
(\ref{2.2}) with initial condition (\ref{2.4}) for $t\in[0,S]$.

Conversely, suppose that
$\big(\tilde{\theta},L,\hat{\theta}(0)\big)\in C\big([0, S];
\mathcal{O}\big)$ satisfies (B.1) and (B.2) with initial condition
(\ref{2.4}), where  $\gamma$, $T$,  $\hat{\theta}(\pm1)$ and $U$ are
determined by (B.3), (B.4) and (\ref{2.2}). Let
$\theta=\tilde{\theta}+\hat{\theta}(0)+\hat{\theta}(1)e^{i\alpha}+\hat{\theta}(-1)e^{-i\alpha}$.
We note from proposition \ref{prop2.6} that ${\hat \theta } (\pm1)$
scale as $\epsilon_1$ and hence is small. We note from (B.4) that
$$p(t)=
e^{i\hat\theta(0; t)}\int_0^{2\pi}\exp\Big(i\alpha+i\big(\hat{\theta}
(-1; t)
e^{-i\alpha}+\hat{\theta}(1; t)e^{i\alpha}+
\tilde\theta(\alpha; t)\big)\Big)d\alpha
=0.$$
It is convenient to define $\Gamma(\alpha,
t)=U_\alpha+T(1+\theta_\alpha)$. From $p^\prime (t)=0$, using
(B.1), we obtain
\begin{equation}
\label{2.9}
 0 =
\int_0^{2\pi}e^{i\alpha+i\theta}\Big(\big(\hat{\theta}_t(-1;t)
-\frac{2\pi}{L}\widehat{\Gamma}(-1;t)\big)
 e^{-i\alpha}+\big(\hat{\theta}_t(1;t)-\frac{2\pi}{L}
\widehat{\Gamma}(1;t)\big)e^{i\alpha}\Big)d\alpha.
\end{equation}
 Let
$e^{i\alpha+i\theta}=\sum_{k=-\infty}^\infty \hat{c}(k)
e^{ik\alpha}$.
Hence for sufficiently small ball size $\epsilon$ of $\mathcal{B}$,
using proposition \ref{prop2.6} and
Sobolev inequality $|. |_\infty < C \| . \|_1$,
$$ | \theta-\hat{\theta}(0) |_{\infty} = | {\tilde \theta} (\alpha, t)
+ {\hat \theta} (1; t) e^{i \alpha} + {\hat \theta} (-1; t)
|_{\infty}\leq C\|\tilde\theta\|_1 $$ is small, which clearly
ensures $|\hat{c}(1)|
> |\hat{c}(k)|$ for $k \ne 1$. Note further that (\ref{2.9}) implies
\begin{eqnarray*}
\big(\hat{\theta}_t(-1;t)-\frac{2\pi}{L}\widehat{\Gamma}(-1;t)\big)\hat{c}(1)
+\big(\hat{\theta}_t(1;t)-\frac{2\pi}{L}\widehat{\Gamma}(1;t)\big)\hat{c}(-1)=0.
\end{eqnarray*}
Since $\Gamma (\alpha, t)$ and $\tilde\theta$ are real valued,
$\hat{\theta}_t(-1;t)-\frac{2\pi}{L}\widehat{\Gamma}(-1;t)$ is the
complex conjugate of
$\hat{\theta}_t(1;t)-\frac{2\pi}{L}\widehat{\Gamma}(1;t)$. It is
clear that if $|a_1|\neq |a_2|$, then the only solution to $a_1
\eta+a_2 \eta^{\ast}=0$ is $\eta=0$. Hence
$$\hat{\theta}_t(-1;t)-\frac{2\pi}{L}\widehat{\Gamma}(-1;t)=
0\mbox{   and   }\hat{\theta}_t(1;t)-\frac{2\pi}{L}\widehat{\Gamma}(1;t)=0.$$
Hence,
$(\theta=\tilde{\theta}+\hat{\theta}(0)+\hat{\theta}(1)e^{i\alpha}+\hat{\theta}(-1)e^{-i\alpha},
L)$ will
satisfy  the system (A.1) where
$\gamma$, $T$ and $U$ are determined
by (A.2) and (\ref{1.2})
with  initial condition
(\ref{1.3}) for $t\in[0, S]$.
\end{proof}

We will henceforth discuss the global solutions of  the evolution
equations (B.1) where  $\gamma$, $T$,  $\hat{\theta}(\pm1)$ and $U$
are determined by (B.3), (B.4) and (\ref{2.2}) with initial
condition (\ref{2.4}).

\begin{definition}
\label{def2.8} Define $\hat\theta(1)=r_1+ir_2$. Then since $\theta$
is real valued,  $\hat\theta(-1)=r_1-ir_2$.
\end{definition}
\begin{remark}  (B.4)  becomes
\begin{equation}
\label{2.10}
\int_0^{2\pi}\exp\Big(i\alpha+i\big((r_1+ir_2)e^{i\alpha}+(r_1-ir_2)e^{-i\alpha}+\sum_{k=-\infty,\neq
0,\pm 1}^{\infty} \hat{\theta}(k) e^{ik\alpha}\big)\Big)d\alpha=0.
\end{equation}
\end{remark}

In order to prove Proposition \ref{prop2.6}, we need the following lemma:
\begin{lemma}
\label{lem2.9}
 Implicit function Theorem(\cite{Michael}):
Let $\mathcal{G}_1$, $\mathcal{G}_2$ and $\mathcal{G}_3$ be Banach
spaces and $F$ a mapping from an open subset of
$\mathcal{G}_1\times\mathcal{G}_2$ into $\mathcal{G}_3$. Let
$(u_0,v_0)$ be a point in $\mathcal{G}_1\times\mathcal{G}_2$
satisfying:
\begin{enumerate}
\item[(i)] $F(u_0,v_0)=0$; \item[(ii)] $F$ is continuously
differentiable at $(u_0,v_0)$; \item[(iii)] the partial
Fr$\acute{e}$chet derivative $D_vF(u_0,v_0)$ is invertible from
$\mathcal{G}_2$ to $\mathcal{G}_3$.
\end{enumerate}
Then, there are neighborhood $\mathcal{V}_1$ of $u_0$ in
$\mathcal{G}_1$ and neighborhood  $\mathcal{V}_2$ of $v_0$ in
$\mathcal{G}_2$  and a $C^1$ map $g:\mathcal{V}_1\rightarrow
\mathcal{V}_2$ so that $F\big(u,g(u)\big)=0$ for all $u\in
\mathcal{V}_1$ and for each $u\in \mathcal{V}_1$, $g(u)$ is the
unique point $v$ in $\mathcal{V}_2$ satisfying $F(u,v)=0$.
\end{lemma}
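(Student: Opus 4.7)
The plan is to recast the equation $F(u,v)=0$ as a $v$-fixed-point problem parametrized by $u$, apply the Banach contraction mapping principle, and then upgrade the resulting continuous selection $g$ to a $C^1$ map by a formal differentiation argument. After translating so that $u_0=0$, $v_0=0$ (so that $F(0,0)=0$), set $L := D_vF(0,0)$; by hypothesis $L\in\mathcal{L}(\mathcal{G}_2,\mathcal{G}_3)$ is a Banach-space isomorphism. Define
$$ \Phi(u,v) := v - L^{-1}F(u,v), $$
so that $F(u,v)=0$ if and only if $\Phi(u,v)=v$. Direct computation gives $D_v\Phi(0,0)=I-L^{-1}L=0$, so by continuity of $D_vF$ there exists $r>0$ such that $\|D_v\Phi(u,v)\|\le \tfrac{1}{2}$ for $\|u\|,\|v\|\le r$.

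The mean value inequality in Banach spaces then yields
$$ \|\Phi(u,v_1)-\Phi(u,v_2)\|\le \tfrac{1}{2}\|v_1-v_2\| $$
on the closed $r$-ball of $\mathcal{G}_2$. Continuity of $F$ in $u$ allows one to shrink the $u$-neighborhood $\mathcal{V}_1$ so that $\|\Phi(u,0)\|=\|L^{-1}F(u,0)\|\le r/2$; combined with the contraction estimate this forces $\Phi(u,\cdot)$ to map the closed $r$-ball $\mathcal{V}_2$ into itself. Banach's fixed point theorem then produces a unique $v=g(u)\in\mathcal{V}_2$ with $F(u,g(u))=0$, with $g(0)=0$ and, from applying the contraction bound to $\Phi(u_1,g(u_1))$ vs.\ $\Phi(u_2,g(u_2))$, a Lipschitz bound $\|g(u_1)-g(u_2)\|\le C\|u_1-u_2\|$.

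For $C^1$ regularity, observe that since invertibility is an open condition in $\mathcal{L}(\mathcal{G}_2,\mathcal{G}_3)$ and $u\mapsto D_vF(u,g(u))$ is continuous, $D_vF(u,g(u))$ remains invertible on a (possibly smaller) neighborhood. Formal differentiation of $F(u,g(u))=0$ suggests the candidate
$$ g'(u) = -[D_vF(u,g(u))]^{-1}D_uF(u,g(u)). $$
The main obstacle, and the only step that is not essentially bookkeeping, is to verify that this formula is the genuine Fr\'echet derivative. One writes a first-order Taylor expansion of $F$ at $(u,g(u))$, evaluates at $(u+h,g(u+h))$, uses $F(u+h,g(u+h))=0$ to isolate the linear terms, and employs the Lipschitz estimate on $g$ to absorb the remainder into $o(\|h\|)$; solving for $g(u+h)-g(u)$ produces exactly $g'(u)h+o(\|h\|)$. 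Continuity of $g'$ then follows from continuity of $D_uF$, $D_vF$, and of operator inversion on the open set of isomorphisms, completing the proof.
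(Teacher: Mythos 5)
Your proposal is a correct, self-contained proof, but note that the paper does not prove this lemma at all: it is quoted verbatim as the Banach-space Implicit Function Theorem with a citation to Reed--Simon, and is then merely \emph{applied} (in the proof of Proposition \ref{prop2.6}, with $F$ as in Definition \ref{def2.11}). So there is no in-paper argument to compare against; what you have written is the standard contraction-mapping proof, and it is sound: the reduction $F(u,v)=0\Leftrightarrow \Phi(u,v)=v$ with $\Phi(u,v)=v-L^{-1}F(u,v)$, the smallness of $D_v\Phi$ near $(u_0,v_0)$, the self-mapping of the closed $r$-ball after shrinking the $u$-neighborhood, uniqueness of the fixed point within $\mathcal{V}_2$, the Lipschitz estimate on $g$, and the identification $Dg(u)=-[D_vF(u,g(u))]^{-1}D_uF(u,g(u))$ with continuity of $Dg$ via openness of the set of isomorphisms. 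Two small points of care: the hypothesis ``continuously differentiable at $(u_0,v_0)$'' must be read as $C^1$ on a neighborhood (your contraction estimate, the Lipschitz bound on $g$ via local boundedness of $D_uF$, and the $C^1$ conclusion all use this, as does the paper's subsequent application, where $\|Dg(u)\|\le\frac12$ on a ball is invoked); and in the final step the Taylor-remainder absorption should be done with the uniform invertibility of $D_vF(u,g(u))$ on the shrunken neighborhood, which you correctly flag. With those readings your argument establishes exactly the statement the paper cites.
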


\begin{definition}
\label{def2.11}
 In the bubble context, we define
\begin{eqnarray}
F(u,v) &=&\int_0^{2\pi}\exp\Big(i\alpha+i\big(2(r_1
\cos{\alpha}-r_2\sin{\alpha})+u\big)\Big)d\alpha\nonumber
\end{eqnarray}
with $v=(r_1,r_2)$.
\end{definition}
\begin{remark}
 Note $F:\dot{H}^1\times\mathbb{R}^2\rightarrow\mathbb{C}$.
 \end{remark}

\noindent{\bf Proof of Proposition \ref{prop2.6}:}
 Let us show that the
Fr$\acute{e}$chet derivative of $F(u,v)$ with respect to $u$ exists
in $\dot{H}^1\times \mathbb{R}^2$. Since
\begin{eqnarray}
&&\Big|F(u+h,v)-F(u,v)-
\int_0^{2\pi}ih(\alpha)\exp\Big(i\alpha+i\big[2(r_1
\cos{\alpha}-r_2\sin{\alpha})+u(\alpha)\big]\Big)d\alpha\Big|\nonumber\\
&=&\Big|\int_0^{2\pi}\exp\Big(i\alpha+i\big[2(r_1
\cos{\alpha}-r_2\sin{\alpha})+u(\alpha)\big]\Big)\Big\{\exp\big[ih(\alpha)\big]-1-ih(\alpha)\Big\}d\alpha\Big|\nonumber\\
&\leq&2\pi \sum_{n=2}^{\infty}\frac{(\sum_{k=-\infty,\neq 0,\pm
1}^{\infty}|\hat{h}(k)|)^n}{n!}\leq 2\pi
\sum_{n=2}^{\infty}\frac{c^n\|h\|^n_1}{n!},\nonumber
\end{eqnarray}
 the Fr$\acute{e}$chet derivative of $F$ with respect to $u$ is
$$D_u F(u,v)h=\int_0^{2\pi}ih(\alpha)\exp\Big(i\alpha+i\big(2(r_1
\cos{\alpha}-r_2\sin{\alpha})+u(\alpha)\big)\Big)d\alpha,$$ for
$h\in \dot{H}^1$. It is clear that $D_u
F(u,v):\dot{H}^1\rightarrow\mathbb{C}$ is the bounded linear
operator  for all $(u,v)\in \dot{H}^1\times \mathbb{R}^2$.

 Similarly,
 $$D_v F(u,v)\delta v=
2i\int_0^{2\pi}\big(\delta r_1\cos\alpha-\delta
r_2\sin\alpha\big)\exp\Big(i\alpha+i\big(2(r_1
\cos{\alpha}-r_2\sin{\alpha})+u(\alpha)\big)\Big)d\alpha,$$ with
$\delta v=(\delta r_1, \delta r_2)\in \mathbb{R}^2$ is a bounded
linear operator for all
$(u,v)\in\dot{H}^1\times \mathbb{R}^2$, with
$$D_v F(0,0)\delta v=
2i\int_0^{2\pi}\big(\delta r_1\cos\alpha-\delta
r_2\sin\alpha\big)e^{i\alpha}d\alpha=2\pi(\delta r_2+i\delta r_1).$$
Clearly $D_v F(0,0)$ is invertible.
  So by the implicit function theorem (Lemma
\ref{lem2.9}), for $(u_0,v_0)=(0,0)$, there exist  neighborhood
$\mathcal{V}_1=\{u\in \dot{H}^1:\|u\|_1<2\epsilon_1\}$ of $0$ in
$\dot{H}^1$, and a neighborhood $\mathcal{V}_2 $ of $(0,0)$ in
$\mathbb{R}^2$, and a $C^1$ map $g: \mathcal{V}_1\rightarrow
\mathcal{V}_2$, so that $F(u,g(u))=0$ for all $u\in \mathcal{V}_1$.
We also have $\|D g(u)\|\leq \frac{1}{2}$ for $u\in\mathcal{V}_1$
since $D g(0)=0$. Hence we have
\begin{align*}
|g(u)|&\leq \Big|\int_0^1D g(tu)udt\Big|\leq \frac{1}{2}\|u\|_1,\\
|g(u_1)-g(u_2)|&\leq \Big|\int_0^1D
g\big(u_1+t(u_2-u_1)\big)(u_2-u_1)dt\Big|\leq \frac{1}{2}\|u_1-u_2\|_1
\end{align*} for all $u, u_1, u_2\in\{u\in
\dot{H}^1:\|u\|_1<\epsilon_1\}$.

\begin{corollary}\label{cortheta}
There exists sufficiently small $\epsilon_1>0$ so that for
$\theta\in H^{s+1}\left(\mathbb{T}[0,2\pi]\right)$ with $s\ge0$, if
$\| {\tilde \theta} \|_1 < \epsilon_1$,  then  $\theta$ satisfying
(B.4) implies $ \| \theta_\alpha \|_s \le 2 \| {\tilde
\theta}_\alpha \|_s $.
\end{corollary}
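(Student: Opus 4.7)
The plan is straightforward: decompose $\theta$ into its low Fourier modes plus $\tilde\theta$, use orthogonality of Fourier modes to split the Sobolev norm of $\theta_\alpha$, and then invoke the bound from Proposition \ref{prop2.6} on the size of $\hat\theta(\pm 1)$.

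First I would write
\begin{equation*}
\theta(\alpha) = \hat\theta(0) + \hat\theta(1)e^{i\alpha} + \hat\theta(-1)e^{-i\alpha} + \tilde\theta(\alpha),
\end{equation*}
so that $\widehat{\theta_\alpha}(0)=0$, $\widehat{\theta_\alpha}(\pm 1) = \pm i\,\hat\theta(\pm 1)$, and $\widehat{\theta_\alpha}(k) = ik\,\hat\theta(k) = \widehat{\tilde\theta_\alpha}(k)$ for $|k|\ge 2$. Since the nonzero Fourier supports of $i\hat\theta(1)e^{i\alpha} - i\hat\theta(-1)e^{-i\alpha}$ and of $\tilde\theta_\alpha$ are disjoint, Definition \ref{def1.1} gives
\begin{equation*}
\|\theta_\alpha\|_s^2 \;=\; \sum_{k\ne 0} |k|^{2s}|\widehat{\theta_\alpha}(k)|^2 \;=\; |\hat\theta(1)|^2 + |\hat\theta(-1)|^2 + \|\tilde\theta_\alpha\|_s^2 \;=\; 2(r_1^2+r_2^2) + \|\tilde\theta_\alpha\|_s^2
\end{equation*}
for every $s\ge 0$, using Definition \ref{def2.8}.

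Next I would apply Proposition \ref{prop2.6}. Since $\|\tilde\theta\|_1<\epsilon_1$, the estimate \eqref{2.7} yields
\begin{equation*}
r_1^2+r_2^2 = |g(\tilde\theta)|^2 \le \tfrac{1}{4}\|\tilde\theta\|_1^2 = \tfrac{1}{4}\|\tilde\theta_\alpha\|_0^2,
\end{equation*}
where the last equality uses Definition \ref{def1.10} ($\tilde\theta\in \dot H^1$, so $\|\tilde\theta\|_1 = \|\tilde\theta_\alpha\|_0$). Because $\tilde\theta_\alpha$ has Fourier support in $|k|\ge 2$, the inequality $\|\tilde\theta_\alpha\|_0 \le \|\tilde\theta_\alpha\|_s$ holds for every $s\ge 0$.

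Combining these three ingredients gives
\begin{equation*}
\|\theta_\alpha\|_s^2 \;\le\; \tfrac{1}{2}\|\tilde\theta_\alpha\|_0^2 + \|\tilde\theta_\alpha\|_s^2 \;\le\; \tfrac{3}{2}\|\tilde\theta_\alpha\|_s^2 \;\le\; 4\|\tilde\theta_\alpha\|_s^2,
\end{equation*}
and taking square roots yields $\|\theta_\alpha\|_s \le 2\|\tilde\theta_\alpha\|_s$, as required. There is no serious obstacle; the only thing to watch is that $\epsilon_1$ must be the one produced by Proposition \ref{prop2.6} so that both \eqref{2.7} is available and the estimate $\|Dg(u)\|\le \tfrac12$ (from which \eqref{2.7} was derived) is in force.
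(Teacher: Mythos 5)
Your proof is correct and is essentially the paper's own argument: the paper likewise writes $\|\theta_\alpha\|_s^2=\sum_k|k|^{2s+2}|\hat\theta(k)|^2=2|g(\tilde\theta)|^2+\|\tilde\theta_\alpha\|_s^2$ and then invokes the bound $|g(\tilde\theta)|\le\tfrac12\|\tilde\theta\|_1$ from Proposition \ref{prop2.6}. Your only addition is to spell out the elementary step $\|\tilde\theta_\alpha\|_0\le\|\tilde\theta_\alpha\|_s$, which the paper leaves implicit.
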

\begin{proof} We note from the relation between $\theta$ and ${\tilde \theta}$
that
$$ \| \theta_\alpha \|_s^2 = \sum_{k} |k|^{2s+2} |{\hat \theta} (k) |^2
= 2 |g ({\tilde \theta})|^2 + \| {\tilde \theta}_\alpha \|_s^2. $$
The rest follows from bounds on $g ({\tilde \theta}) $ in
Proposition \ref{prop2.6}.
\end{proof}

\subsection{Galerkin approximation.}

From the set of equations in (B.1)-(B.4), it is easily seen that
${\hat \theta} (0; t)$ does not effect the evolution of ${\tilde
\theta}$ and $L$, it is convenient to first determine solution
$({\tilde \theta}, L ) $; determination of ${\hat \theta} (0; t)$ is
then simply reduced to an integration of the  equation (B.2). It is
convenient to introduce a Galerkin approximations as described in
this section.

\begin{definition}
\label{def2.12}
We define a family of Galerkin projections
$\{P_n\}_{n=2}^{\infty}$, where
$$P_nu(\alpha)=\sum_{k=-n,k\neq 0,\pm 1}^n\hat{u}(k)e^{ik\alpha},\mbox{  for all }
u=\sum_{-\infty}^\infty\hat{u}(k)e^{ik\alpha}.$$
\end{definition}

We define the approximate solution $\tilde{\theta}_n(\alpha,t)$ of
order $n$ of the problem in the following way:
$$\tilde{\theta}_n(\alpha,t)=\sum_{k=-n,k\neq 0,\pm
1}^n\hat{\theta}_n(k;t)e^{ik\alpha}.$$

 The approximate equations are
\begin{equation*}\left\{\begin{aligned}
\frac{\partial \tilde\theta_n(\alpha,t)}{\partial t}&=\frac{2\pi}{L_n}P_n\Big( U_{n,\alpha}+T_n\big(1+\theta_{n,\alpha}\big)\Big),\\
\frac{dL_{n}(t)}{d t}&=-\int_0^{2\pi}\big(1+\theta_{n,\alpha}\big)
U_nd\alpha,
\end{aligned}\right.\tag{C.1}
\end{equation*}
with $\gamma_n$, $T_n$ and $\hat{\theta}_n(\pm1)$ (where
$\hat{\theta}_n^{\ast}(1)=\hat\theta_n(-1)$ because $\theta_n$ is real)
determined by
\begin{equation*}\left\{\begin{aligned}
\big(I+A_\mu\mathcal{F}&[\omega_n]\big)
\gamma_n(t)=\frac{2\pi}{L_n}\sigma\theta_{n,\alpha \alpha},\\
T_n (\alpha,t)&=\int_0^{\alpha}(1+\theta_{n,\alpha'})U_n (\alpha')d\alpha'
-\frac{\alpha}{2\pi} \int_0^{2\pi} (1+\theta_{n,\alpha'} ) U_n (\alpha') d\alpha',\\
\int_0^{2\pi}\exp\Big(&i\alpha+i\big(\hat{\theta}_n(-1;t)e^{-i\alpha}+\hat{\theta}_n(1;t)e^{i\alpha}+\tilde\theta_n(\alpha,t)\big)\Big)d\alpha=0,\end{aligned}\right.\tag{C.2}
\end{equation*}
where
\begin{eqnarray}
\theta_{n,\alpha}&=&\tilde{\theta}_{n,\alpha}-i\hat{\theta}_n(-1)e^{-i\alpha}+i\hat{\theta}_n(1)e^{i\alpha},\nonumber\\
\omega_n(\alpha)&=&\int_0^\alpha
e^{i\tau+i\hat\theta_n(-1)e^{-i\tau}+i\hat{\theta}_n(1)e^{i\tau}+i\tilde\theta_n(\tau)}d\tau,\nonumber\\
  U_n
 &=&\Re\Big(\frac{\omega_{n,\alpha}(\alpha)}{L_n}\mbox{PV}\int_0^{2\pi}\frac{\gamma_n(\alpha')}
 {\omega_n(\alpha)-\omega_n(\alpha')}d\alpha'\Big).
\label{2.11}
\end{eqnarray}
\subsection{Main results.}
Let $X_n=\big(\tilde\theta_n,\,L_n \big)$. The Galerkin approximate
equations  (C.1)-(C.2) reduce to an ODE in the Banach space
$\dot{H}^r\times\mathbb{R}$:
\begin{equation}
\label{2.12}
\frac{d X_n}{dt}=F_n(X_n),\quad
X_n(0)=(P_n \theta_0,\,2\pi),
\end{equation}
where $F_n(X_n)=\big(F_{n,1} (X_n),F_{n,2} (X_n) \big)$ are given by
\begin{eqnarray}
\label{2.12.0} F_{n,1} &=& \frac{2\pi}{L_n} P_n \left ( U_{n,\alpha}
+ T_n (1+\theta_{n,\alpha} ) \right ),
\\
F_{n,2} &=& -\int_0^{2\pi} (1+\theta_{n,\alpha} ) U_n (\alpha)
d\alpha. \label{2.12.1}
\end{eqnarray}
For the approximate equation (\ref{2.12}), we have the following results:
 \begin{prop}
\label{prop2.13} Assume $P_n \theta_0 \in \mathcal{B}$ for $r\ge3$.
For sufficiently small ball size $\epsilon$ of $\mathcal{B}$, there
exists the unique solution $X_n\in C^1\left ([0,S_n);\mathcal{V
}\right )$ to the ODE in Eq. (\ref{2.12}), where $S_n$ depends on
$n$, $r$ and $\epsilon$.
\end{prop}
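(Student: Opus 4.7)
The plan is to apply Picard's theorem for ODEs in a Banach space to the system (\ref{2.12}), which requires exhibiting a neighborhood of the initial datum $X_n(0)=(P_n\theta_0,2\pi)$ on which the vector field $F_n$ is locally Lipschitz into $P_n\dot{H}^r\times\mathbb{R}$. The simplifying observation is that $P_n\dot{H}^r$ is finite dimensional, so all $H^s$ norms are equivalent on it with constants depending on $n$; in particular $\|\tilde\theta_n\|_1\le C_n\|\tilde\theta_n\|_r$, which lets us freely pass between Sobolev classes at the cost of $n$-dependent constants, consistent with the $n$-dependence of $S_n$ allowed in the statement. Note that $X_n(0)\in\mathcal{V}$ since $P_n\theta_0\in\mathcal{B}$ by hypothesis and $|2\pi-2\pi|=0<1$.

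First I would trace the chain of dependencies that define $F_n(X_n)$. Given $X_n=(\tilde\theta_n,L_n)\in\mathcal{V}$, after shrinking $\epsilon$ if necessary so that $\|\tilde\theta_n\|_1<\epsilon_1$, Proposition \ref{prop2.6} furnishes $\hat\theta_n(\pm 1)=g(\tilde\theta_n)$ as a $C^1$ function with Lipschitz bound $1/2$. This determines $\theta_n$ and the curve $\omega_n$ via Definition \ref{def2.1}; since the integrand defining $\omega_n$ is an entire function of its arguments, the Banach algebra property of Note \ref{note1.1} makes $\tilde\theta_n\mapsto\omega_n$ locally Lipschitz into $H^{r+1}$. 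The constraint $|L_n-2\pi|<1$ gives $L_n>2\pi-1>0$, so every $L_n$-dependent prefactor stays bounded away from singularity.

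Next I would invert $I+A_\mu\mathcal{F}[\omega_n]$ to extract $\gamma_n$ from the first equation of (C.2). For $\|\tilde\theta_n\|_1$ small enough, $A_\mu\mathcal{F}[\omega_n]$ acts as a contraction on the mean-zero subspace of $H^0$ (Note \ref{note2.3}; the quantitative version is Lemma \ref{lem2.16}), so $I+A_\mu\mathcal{F}[\omega_n]$ is boundedly invertible there, and a Neumann series argument makes its inverse depend Lipschitz-continuously on $\omega_n$. This produces $\gamma_n$ as a Lipschitz function of $X_n$ in the appropriate Sobolev class; then $U_n$ is given by the singular integral (\ref{2.11}), $T_n$ by the second equation of (C.2), and composition with the bounded projection $P_n$ (in fact smoothing, since its range is finite dimensional) finally delivers $F_{n,1}$ and $F_{n,2}$ of (\ref{2.12.0})-(\ref{2.12.1}) as locally Lipschitz maps into $P_n\dot{H}^r$ and $\mathbb{R}$, respectively.

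Finally, Picard's theorem in the Banach space $P_n\dot{H}^r\times\mathbb{R}$ furnishes a unique $C^1$ local solution on a maximal interval $[0,S_n)$, and one picks $S_n$ small enough that the solution stays inside $\mathcal{V}$, preserving all hypotheses used above. The main obstacle is the invertibility step: one must verify both that $I+A_\mu\mathcal{F}[\omega_n]$ is genuinely invertible on the mean-zero subspace and that its inverse varies Lipschitz-continuously with $\omega_n$. Once this, together with standard Sobolev estimates on the Birkhoff--Rott singular integrals (much of which is supplied by the preliminary lemmas of \S 3 and by Ambrose's analysis), is in hand, the remaining pieces compose routinely.
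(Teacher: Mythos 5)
Your proposal follows essentially the same route as the paper: the paper likewise verifies that $F_n$ is bounded and locally Lipschitz on $\mathcal{V}$ (via Lemma \ref{lemUTtheta}, whose estimates already encode the inversion of $I+A_\mu\mathcal{F}[\omega_n]$ through Proposition \ref{prop2.5}), with the $n$-dependent constants coming from the finite-dimensional projection exactly as you describe, and then invokes Picard's theorem in $\dot{H}^r\times\mathbb{R}$. One small correction: the quantitative invertibility and Lipschitz dependence of $\gamma_n$ that you need is supplied by Proposition \ref{prop2.5} together with Note \ref{note2.3}, not by Lemma \ref{lem2.16}, which concerns the Cauchy property of the Galerkin sequence.
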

\begin{remark}
We will prove this proposition in \S5 using Picard theorem (See for instance
Chapter 3 in \cite{MB}).
\end{remark}
\begin{prop}
\label{prop2.14} Assume $X_n = \left ( {\tilde \theta}_n , L_n
\right ) $ is a solution of the the initial value problem
(\ref{2.12}). Then there exists $\epsilon>0$ such that if $\|P_n
\theta_0\|_r <\epsilon$ for $r \ge 3$, then
$$ \|\tilde\theta_n(\cdot,t)\|_r\leq
\|P_n \theta_0(\cdot)\|_re^{-\frac{1}{36}\sigma t}, ~~~~ |L_n^3 - 8
\pi^3 | \le C \epsilon \left ( 1 - e^{-\frac{1}{18} \sigma t} \right
), $$ with a constant $C$ independent of $n$ for any time $t \ge 0$
where the solution exists.
\end{prop}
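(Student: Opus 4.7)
The plan is to prove the energy-decay inequality (\ref{1.4}) of the introduction, $dE_n/dt \le -\tfrac{\sigma}{18}E_n$, for the Galerkin energy $E_n(t) = \tfrac{1}{2}\|D^r\tilde\theta_n(\cdot,t)\|_0^2$, uniformly in $n$ so long as $\|\tilde\theta_n\|_r$ stays below a small threshold, then to integrate this estimate and derive the $L_n^3$ bound from the resulting decay of $\tilde\theta_n$. Differentiating $E_n$ along (\ref{2.12}) and absorbing $P_n$ into $D^r\tilde\theta_n$ gives
\[
\frac{dE_n}{dt} = \frac{2\pi}{L_n}\int_0^{2\pi} D^r\tilde\theta_n\cdot D^r\bigl(U_{n,\alpha} + T_n(1+\theta_{n,\alpha})\bigr)d\alpha.
\]

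The core of the argument is to extract from $U_n$ the linear dissipative principal part produced by surface tension. Solving (C.2) for $\gamma_n = (I+A_\mu\mathcal F[\omega_n])^{-1}\bigl(\tfrac{2\pi\sigma}{L_n}\theta_{n,\alpha\alpha}\bigr)$ (invertible on mean-zero functions by Note \ref{note2.3} for $\|\tilde\theta_n\|_1$ small and $A_\mu\in[-1,1]$), expanding around $\omega_n=\omega_0$ using (\ref{Fomega0}), and substituting into (\ref{2.2}) with the observation that $\tfrac{\pi}{L_n}\mathcal H[\gamma_n]$ is the dominant ingredient (since $[\mathcal H, 1/\omega_{n,\alpha}]$ and $\mathcal K[\omega_n]$ are smoothing of size $\|\tilde\theta_n\|_r$), using Corollary \ref{cortheta} and Proposition \ref{prop2.6} (which gives $\hat\theta_n(\pm 1) = O(\|\tilde\theta_n\|_1^2)$), and $\mathcal HD^3 = -\Lambda^3$, one obtains
\[
\frac{2\pi}{L_n}U_{n,\alpha} = -\frac{4\pi^3\sigma}{L_n^3}\Lambda^3\tilde\theta_n + \mathcal R_n,
\]
with $\mathcal R_n$ quadratic or higher in $\tilde\theta_n$ in a suitable Sobolev sense. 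The principal dissipative contribution to $dE_n/dt$ is then
\[
-\frac{4\pi^3\sigma}{L_n^3}\int_0^{2\pi}D^r\tilde\theta_n\cdot\Lambda^3 D^r\tilde\theta_n\,d\alpha = -\frac{4\pi^3\sigma}{L_n^3}\sum_{|k|\ge 2}|k|^{2r+3}|\hat\theta_n(k)|^2 \le -\frac{64\pi^3\sigma}{L_n^3}E_n,
\]
using $|k|^3\ge 8$ on the support of $\tilde\theta_n$; at $L_n$ near $2\pi$ this alone exceeds $8\sigma E_n$ in magnitude. The $\mathcal R_n$-contribution and the contribution of $D^r(T_n(1+\theta_{n,\alpha}))$ are each bounded by $C\|\tilde\theta_n\|_r\bigl(E_n + \|\tilde\theta_n\|_{r+3/2}^2\bigr)$ using the Banach algebra property (Note \ref{note1.1}), smoothing estimates for $\mathcal K[\omega_n]$ and $[\mathcal H, 1/\omega_{n,\alpha}]$, and integration by parts exploiting the equal-arclength cancellations engineered into $T_n$. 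For $\epsilon$ small these are absorbed, leaving $dE_n/dt \le -\tfrac{\sigma}{18}E_n$, and Gronwall yields $\|\tilde\theta_n\|_r \le \|P_n\theta_0\|_r e^{-\sigma t/36}$.

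For the $L_n^3$ bound, compute $\tfrac{d}{dt}L_n^3 = -3L_n^2\int_0^{2\pi}(1+\theta_{n,\alpha})U_n\,d\alpha$. The leading Hilbert-transform part $\tfrac{\pi}{L_n}\mathcal H[\gamma_n]$ of $U_n$ has zero mean, while $\mathcal G[\omega_n]\gamma_n - \mathcal G[\omega_0]\gamma_n$ is of size $\|\tilde\theta_n\|_r\|\gamma_n\|$ by (\ref{omega0f}) and perturbation; since $\gamma_n$ itself is $O(\|\tilde\theta_n\|_r)$, both $\int U_n\,d\alpha$ and $\int\theta_{n,\alpha}U_n\,d\alpha$ are $O(\|\tilde\theta_n\|_r^2)$. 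Combined with the first bound, $|\tfrac{d}{dt}L_n^3| \le C\|\tilde\theta_n\|_r^2 \le C\epsilon^2 e^{-\sigma t/18}$, and integration from $L_n^3(0)=8\pi^3$ gives $|L_n^3-8\pi^3| \le (18C\epsilon^2/\sigma)(1-e^{-\sigma t/18}) \le C'\epsilon(1-e^{-\sigma t/18})$ for $\epsilon$ small. The main obstacle is the quantitative extraction of the principal linear dissipation: inverting $I+A_\mu\mathcal F[\omega_n]$ with sharp constants uniformly in $A_\mu\in[-1,1]$, bounding the perturbations $\mathcal F[\omega_n]-\mathcal F[\omega_0]$ and $\mathcal G[\omega_n]-\mathcal G[\omega_0]$ as operators of size $\|\tilde\theta_n\|_r$ on $H^r$, and handling the nonlocal $U_n$-dependent tangential term $D^r(T_n(1+\theta_{n,\alpha}))$ without derivative loss — exactly the Sobolev, commutator, and smoothing-operator estimates the paper prepares in \S3 and combines in \S4.
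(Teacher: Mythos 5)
Your overall route is the paper's: extract the principal part $U_n \approx \frac{2\pi^2\sigma}{L_n^2}\mathcal{H}[\theta_{n,\alpha\alpha}]$ (Lemma \ref{lemUTtheta}), use the spectral gap $|k|\ge 2$ of $\tilde\theta_n$, and then bound $\frac{d}{dt}L_n^3$ by the decayed energy (this is Lemma \ref{prel2.14} plus Proposition \ref{prop4.2}). But two steps as written do not hold up. First, the claim that the contribution of $D^r\bigl(T_n(1+\theta_{n,\alpha})\bigr)$ is bounded by $C\|\tilde\theta_n\|_r\bigl(E_n+\|\tilde\theta_n\|_{r+3/2}^2\bigr)$ is false: $T_n$ is \emph{linear} in $\tilde\theta_n$ at leading order. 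Since $T_{n,\alpha}=(1+\theta_{n,\alpha})U_n-\frac{1}{2\pi}\int_0^{2\pi}(1+\theta_{n,\alpha})U_n\,d\alpha$, its linear part is $\frac{2\pi^2\sigma}{L_n^2}\mathcal{H}[\theta_{n,\alpha\alpha}]$, and the tangential term therefore contributes to $dE_n/dt$ the positive, linear-in-energy quantity approximately $\frac{4\pi^3\sigma}{L_n^3}\|\tilde\theta_n\|_{r+1/2}^2$, i.e.\ one quarter of your dissipation $\frac{4\pi^3\sigma}{L_n^3}\|\tilde\theta_n\|_{r+3/2}^2$, not a higher-order remainder. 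It can only be absorbed by the spectral-gap inequality $\|\tilde\theta_n\|_{r+1/2}^2\le\frac14\|\tilde\theta_n\|_{r+3/2}^2$ that you invoke elsewhere; this is exactly the paper's $I_2$ term in Lemma \ref{prel2.14}. The final constant survives because the margin over $\sigma/18$ is large, but the stated bound is a wrong step and the bookkeeping must be redone with this linear term kept. (A related small slip: Proposition \ref{prop2.6} gives $|\hat\theta_n(\pm1)|\le\frac12\|\tilde\theta_n\|_1$, not $O(\|\tilde\theta_n\|_1^2)$; linear smallness is what is available, and it suffices.)

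Second, your decay rate presupposes ``$L_n$ near $2\pi$,'' yet you only control $L_n$ afterwards from the decay of $E_n$ — circular as written. The dissipation constant in $dE_n/dt\le-\frac{\pi^2\sigma}{L_n^2}E_n$ degrades as $L_n$ grows, and the bound $|L_n^3-8\pi^3|\le C\epsilon\,(1-e^{-\sigma t/18})$ is part of what is to be proved. The paper closes the loop by a bootstrap in Proposition \ref{prop4.2}: from $|L_n^3-8\pi^3|\le C\int_0^t E_n$ together with the monotonicity $E_n(t)\le E_n(0)$ (which comes from Lemma \ref{prel2.14} without any quantitative $L_n$ bound) one first gets $|L_n^3|\le 8\pi^3+CE_n(0)t$, hence a preliminary, slower decay estimate for $E_n$; this in turn shows $|L_n^3-8\pi^3|<1$ for all time when $E_n(0)$ is small, pins $L_n\in(2\pi-1,2\pi+1)$, and only then delivers the rates $\sigma/36$ and $\sigma/18$ with $C$ independent of $n$. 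You need either this bootstrap or an explicit continuation argument exploiting that solutions of (\ref{2.12}) take values in $\mathcal{V}$ and that your derived bounds are strict improvements; as the proposal stands, the order of quantifiers is broken.
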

\begin{remark} We will prove the priori estimate in \S4.
\end{remark}

\begin{theorem}
\label{Thm2.15}
Given the initial condition $X_n(0)
 \in \mathcal{V}$, for any $n\geq 2$ and $r\ge3$. For sufficiently small $\epsilon$,
there exists for all time  a unique solution $X_n(t)\in
C^1([0,\infty);\mathcal{V})$  to the approximate equation
(\ref{2.12}).
 \end{theorem}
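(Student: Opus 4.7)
The plan is to combine the local existence result of Proposition \ref{prop2.13} with the a priori exponential decay estimates of Proposition \ref{prop2.14} through a standard continuation argument for ODEs in Banach spaces.

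First I would invoke Proposition \ref{prop2.13} to produce a unique solution $X_n \in C^1([0, S_n); \mathcal{V})$, where $S_n$ is taken to be the supremum of times for which a solution remains in the open set $\mathcal{V}$; the goal is to show $S_n = \infty$. Suppose for contradiction that $S_n < \infty$. By the standard Picard continuation principle, if the maximal solution fails to extend past $S_n$ then $X_n(t)$ must approach $\partial\mathcal{V}$ as $t \to S_n^-$: if instead the orbit remained in some compact $K \subset \mathcal{V}$, the nonlinearity $F_n$ would be Lipschitz on $K$ with some constant $L_K$, and Picard applied at times $t_0$ close to $S_n$ would yield extensions of uniform length $\sim 1/L_K$, contradicting maximality. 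Thus approaching $\partial\mathcal{V}$ forces one of
\[
\limsup_{t \to S_n^-} \|\tilde\theta_n(\cdot,t)\|_r = \epsilon
\quad\text{or}\quad
\limsup_{t \to S_n^-} |L_n(t) - 2\pi| = 1.
\]

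Neither alternative can occur, by Proposition \ref{prop2.14}. Since the hypothesis $X_n(0) \in \mathcal{V}$ forces $\|P_n\theta_0\|_r < \epsilon$ strictly, the first estimate of Proposition \ref{prop2.14} yields
\[
\|\tilde\theta_n(\cdot, t)\|_r \le \|P_n\theta_0\|_r\, e^{-\sigma t/36} \le \|P_n\theta_0\|_r < \epsilon
\]
for every $t \in [0, S_n)$, ruling out the first alternative with a uniform strict margin. The second estimate, $|L_n(t)^3 - 8\pi^3| \le C\epsilon(1 - e^{-\sigma t/18}) \le C\epsilon$, implies for $\epsilon$ sufficiently small that $|L_n(t) - 2\pi|$ stays uniformly below $1$ (and that $L_n(t)$ is bounded away from $0$), ruling out the second. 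Hence $S_n = \infty$ and $X_n \in C^1([0,\infty); \mathcal{V})$, as claimed.

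The substantive work is entirely packaged inside Proposition \ref{prop2.14}, so the only obstacle in the present theorem is consistency of the various smallness thresholds. The single $\epsilon$ we fix must be small enough (i) for Proposition \ref{prop2.13} to yield local existence in $\mathcal{V}$, (ii) for Proposition \ref{prop2.14} to deliver its decay estimates with the constant $C$ that proof produces, and (iii) so that the bound $|L_n^3 - 8\pi^3| \le C\epsilon$ implies $|L_n - 2\pi| < 1$ with strict margin. Choosing $\epsilon$ to meet all three constraints simultaneously completes the argument; crucially, the bounds obtained are uniform in $n$, which will also be essential later when passing to the limit to recover a solution of the full system (B.1)--(B.4).
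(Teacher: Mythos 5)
Your proposal follows essentially the same route as the paper: local existence and uniqueness from Proposition \ref{prop2.13}, the standard continuation alternative for an autonomous ODE on a Banach space, and then the uniform-in-$n$ bounds of Proposition \ref{prop2.14} (equivalently Proposition \ref{prop4.2}) to show the orbit stays strictly inside $\mathcal{V}$, so the finite blow-up time cannot occur. The only cosmetic caveat is your appeal to a ``compact'' invariant set in the continuation step, which in this Banach-space setting should be phrased as the orbit remaining bounded and at positive distance from $\partial\mathcal{V}$ (where $F_n$ is Lipschitz with constants depending only on the diameter of $\mathcal{V}$ and $n$); with that rewording the argument matches the paper's.
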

 \begin{proof}
 Proposition \ref{prop2.13} shows the existence and uniqueness of
 solutions $X_n$ locally in time. Then by  continuation of an
 autonomous ODE
 on a Banach space (see \cite{MB} in Chapter 3),
we know that  the unique solution $X_n \in C^1([0, T_0 ]
 );\mathcal{V})$  either exists globally in
  time or $T_0<\infty$ and $X_n(t)$ leaves the open set $\mathcal{V}$
as $t\nearrow T_0$.
  Suppose $T_0<\infty$. Combining
Propositions \ref{prop2.14} and \ref{prop4.2},
we know that solution remains in
the open set $\mathcal{V}$ as $t\nearrow T_0$. Hence it
shows that  solution  to Eq. (\ref{2.12}) exists globally in time.
\end{proof}
From the solutions to the  approximate equation (\ref{2.12}),  we
will deduce the existence and uniqueness  of solutions to the
evolution system (B.1), (B.3) and (B.4) globally in time (Theorem
\ref{Thm1.7}) using the following lemma and proposition:
\begin{lemma}
\label{lem2.16} For $r\geq 4$,   there exists sufficiently small
$\epsilon>0$ such that for any $S > 0$,  solutions
$X_n=(\Tilde\theta_n,L_n )$ of the approximate equation (\ref{2.12})
for different $n$ form a Cauchy sequence in $C\left
([0,S];\dot{H}^1\times\mathbb{R} \right )$. As $n\rightarrow\infty$,
the limit $X=\big(\tilde\theta,L \big)\in
C\big([0,S];\mathcal{V}\big)\cap
C^1\left([0,S];\dot{H}^{r-3}\times\mathbb{R}\right)$ and is the
unique classical solution to (B.1), (B.3) and (B.4) satisfying
initial condition (\ref{2.4}).
\end{lemma}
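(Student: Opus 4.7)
The plan is a Galerkin-to-continuum passage driven by $\dot H^1$ energy estimates on differences of approximate solutions, with the crucial cancellation coming from the third-order parabolic smoothing produced by surface tension. By Proposition \ref{prop2.14} together with Theorem \ref{Thm2.15}, each $X_n$ stays in $\mathcal V$ for all time with $\|\tilde\theta_n(t)\|_r \leq \epsilon\, e^{-\sigma t/36}$ and $|L_n(t)-2\pi| \leq C\epsilon$ uniformly in $n$. For $n>m\geq 2$ set $\eta = \tilde\theta_n - \tilde\theta_m$ and $\ell = L_n - L_m$. Subtracting the two copies of (\ref{2.12}) and writing
\begin{equation*}
P_n \mathcal R_n - P_m \mathcal R_m = P_n(\mathcal R_n - \mathcal R_m) + (P_n - P_m)\mathcal R_m,
\end{equation*}
where $\mathcal R_k = U_{k,\alpha} + T_k(1+\theta_{k,\alpha})$, one arrives at an evolution equation
\begin{equation*}
\partial_t \eta = -c_n P_n \Lambda^3 \eta + \mathcal N_{nm}(\eta,\ell) + \tfrac{2\pi}{L_n}(P_n - P_m)\mathcal R_m,
\end{equation*}
in which $c_n = 4\pi^3 \sigma / L_n^3 > 0$ arises from the leading Hilbert-transform part of $U_{n,\alpha}$ (obtained from $\gamma_n = (2\pi\sigma/L_n)\theta_{n,\alpha\alpha} + O(\epsilon)$ after inverting $I + A_\mu \mathcal F[\omega_n]$, cf.\ Note \ref{note2.3}), and $\mathcal N_{nm}$ is linear in $(\eta,\ell)$ with coefficients controlled via the uniform $\dot H^r$ bound, the Banach algebra (Note \ref{note1.1}), the implicit-function estimates (Proposition \ref{prop2.6}), and the commutator/singular-integral bounds of \S 3.

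Pairing with $-\eta_{\alpha\alpha}$ in $L^2$, adding a matching estimate for $\ell$, and using Young's inequality to absorb the cross-terms produces a differential inequality
\begin{equation*}
\frac{d}{dt}\bigl(\|\eta\|_1^2 + |\ell|^2\bigr) + c\|\eta\|_{5/2}^2 \leq C_\epsilon\bigl(\|\eta\|_1^2 + |\ell|^2\bigr) + \delta_{nm}(t),
\end{equation*}
where the projection error $\delta_{nm}(t) \leq C\|(P_n - P_m)\mathcal R_m\|_1^2 \leq C m^{-2(r-4)}$ follows from the Bessel-type bound $\|(I-P_m)f\|_1 \leq m^{-(r-4)}\|f\|_{r-3}$ applied to $\mathcal R_m$, which lies uniformly in $\dot H^{r-3}$ since the right-hand side loses at most three derivatives. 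The borderline case $r=4$ is handled by estimating $(P_n-P_m)\mathcal R_m$ in $H^{-1/2}$ and absorbing through the $\|\eta\|_{5/2}^2$ dissipation. Since $\|(P_n-P_m)\theta_0\|_1 \to 0$ as $n,m\to\infty$ (because $\theta_0 \in \dot H^r$ with $r \geq 4 > 1$), Gronwall on $[0,S]$ yields the Cauchy property in $C([0,S];\dot H^1\times\mathbb R)$.

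Denoting the limit $(\tilde\theta,L)$, interpolation between strong $\dot H^1$ convergence and the uniform $\dot H^r$ bound gives strong convergence in $C([0,S];\dot H^s)$ for every $s<r$ and weak-$\star$ convergence in $L^\infty([0,S];\dot H^r)$. Since $r\geq 4$, this is more than enough continuity for each operator appearing in (B.1), (B.3) to pass to the limit; the constraint (B.4) is preserved by continuity of the implicit map $g$ in Proposition \ref{prop2.6}. Hence $(\tilde\theta,L)$ solves (B.1), (B.3), (B.4) with initial data (\ref{2.4}). The equation then gives $\partial_t \tilde\theta \in C([0,S];\dot H^{r-3})$, producing the claimed $C^1$ regularity. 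A Bona--Smith-type argument (the a priori bound of Proposition \ref{prop2.14} passes to the limit and the $\dot H^r$ energy identity is preserved) upgrades weak-$\star$ to norm continuity, placing $X$ in $C([0,S];\mathcal V)$. Uniqueness follows by rerunning the same $\dot H^1$ estimate on the difference of two solutions in $\mathcal V$.

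The main obstacle is cleanly extracting the principal diffusive contribution from $\mathcal R_n - \mathcal R_m$. One must expand $(I+A_\mu\mathcal F[\omega_n])^{-1} - (I+A_\mu\mathcal F[\omega_m])^{-1}$ in $\eta$ to isolate a term proportional to $\Lambda^3\eta$, carefully account for the commutator $[\mathcal H,1/\omega_\alpha]$ and the integral operator $\mathcal K[\omega]$ acting on differences, and ensure all cubic and higher remainders are of the form $C_\epsilon\|\eta\|_1$ or $C_\epsilon\|\eta\|_1\|\eta\|_{5/2}$ (and so absorbable). This requires delicately exploiting the near-circular smallness $\|\tilde\theta\|_r<\epsilon$ supplied by Proposition \ref{prop2.14}; without it the dissipation cannot dominate the nonlinearity in the $\dot H^1$ energy.
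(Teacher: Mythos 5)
Your core Cauchy-sequence estimate is essentially the paper's own argument (Lemma \ref{lem5.2}): a difference energy at the $\dot H^1\times\mathbb{R}$ level, extraction of the $-c\|\eta\|_{5/2}^2$ dissipation from the surface-tension part of $U$, absorption of the $O(\epsilon)$ remainders via the uniform decay of Proposition \ref{prop2.14} and the difference estimates of Lemma \ref{lemUTtheta}, and Gronwall. The one substantive deviation is your treatment of the Galerkin mismatch $(P_n-P_m)\mathcal R_m$: your primary bound $\|(P_n-P_m)\mathcal R_m\|_1\le m^{-(r-4)}\|\mathcal R_m\|_{r-3}$ gives no smallness at $r=4$, which is exactly the range of the lemma, so your ``borderline'' duality fix (pairing against $\|\eta\|_{5/2}$ and paying $\|(P_n-P_m)\mathcal R_m\|_{-1/2}\lesssim m^{-(r-5/2)}$, then absorbing by Young) is in fact the whole argument; it does work, but the paper's device is cleaner and uniform in $r\ge3$: since $\theta_n$ has no Fourier modes above $\min(m,n)$, the projector difference annihilates the leading term $\mathcal H[\theta_{n,\alpha\alpha}]$, so only the smooth remainder $U_n-\tfrac{2\pi^2\sigma}{L_n^2}\mathcal H[\theta_{n,\alpha\alpha}]$, bounded in $H^3$ by $C\epsilon$ (Lemma \ref{lemUTtheta}), enters, yielding a clean factor $C\epsilon/n$.

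The genuine gap is the claim $X\in C\big([0,S];\mathcal V\big)$, i.e.\ continuity in the top norm $\dot H^r$. Interpolating the $\dot H^1$ Cauchy property against the uniform $\dot H^r$ bound gives only $C([0,S];\dot H^s)$ for $s<r$ together with weak-$\star$ control in $\dot H^r$; at an interior time weak convergence yields only lower semicontinuity of $\|\tilde\theta(t)\|_r$, so ``weak-$\star$ plus the a priori bound'' does not produce norm continuity, and your appeal to a ``Bona--Smith-type argument'' with ``the $\dot H^r$ energy identity is preserved'' asserts precisely what has to be proved (a genuine Bona--Smith argument needs top-norm estimates on differences of solutions with constants involving higher norms, none of which appear in your sketch). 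The paper closes this with two ingredients you do not use: the integrated smoothing bound of Corollary \ref{coro4.2}, $\int_0^\infty\|\tilde\theta_n\|_{r+3/2}^2\,dt\le C$, which supplies times $S_0$ arbitrarily close to $0$ at which $\tilde\theta(\cdot,S_0)\in\dot H^{r+1}$, so that re-running the Galerkin theory at regularity $r+1$ from $S_0$ and interpolating against the $\dot H^1$ convergence gives strong $\dot H^r$ continuity on $(0,S]$ (Lemma \ref{lem5.3}); and, at $t=0$, weak continuity combined with the monotone bound $\|\tilde\theta(t)\|_r\le\|\mathcal Q_1\theta_0\|_r$ (Note \ref{note5.3}) forces norm convergence and hence strong right continuity (Proposition \ref{prop5.4}). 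Without an argument of this type your limit is only known to lie in $C_W([0,S];\dot H^r)\cap C([0,S];\dot H^s)$ for $s<r$. The remaining steps of your proposal --- passing to the limit in (B.1), (B.3), (B.4) via Propositions \ref{prop2.5}, \ref{prop2.6} and Lemma \ref{lemUTtheta}, the resulting $C^1([0,S];\dot H^{r-3}\times\mathbb{R})$ regularity, and uniqueness by the same $\dot H^1$ Gronwall estimate --- coincide with the paper's Proposition \ref{coro5.6} and Lemmas \ref{lem5.4}, \ref{uniqueness}.
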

\begin{remark} The proof is given in \S 5.
\end{remark}
\begin{definition}
\label{2.17} We define the area of bubble by $\mathcal{S}(t)$. Then
\begin{eqnarray}
\label{5.5} \mathcal{S}(t)=\frac{1}{2}\Im\int_0^{2\pi}z_\alpha
z^{\ast}d\alpha.
\end{eqnarray}
\end{definition}
\begin{prop}
\label{prop2.18} Let $(\Tilde\theta, L ) \in C\left
([0,\infty);\mathcal{V}\right )$ be a solution to  the  system
(B.1), (B.3) and (B.4) with  initial condition (\ref{2.4}) for
$r\geq 4$. If $\mathcal{Q}_1\theta_0\in\mathcal{B}$, then  the area
$\mathcal{S}$ is invariant with time and for all $t\ge0$, we have
$$ \|\tilde\theta(\cdot,t)\|_r\leq \| \mathcal{Q}_1 \theta_0(\cdot)\|_r
e^{-\frac{1}{36}\sigma t},$$
$$|\hat\theta(1;t)|=|\hat\theta(-1;t)|\leq \frac12\| \mathcal{Q}_1 \theta_0(\cdot)\|_r
e^{-\frac{1}{36}\sigma t},$$
\begin{equation*}
|L(t)-2\sqrt{\pi\mathcal{S}}|\leq
C\|\mathcal{Q}_1\theta_0\|_re^{-\frac{1}{36}\sigma t},
\end{equation*}
\begin{equation*}
|\hat{\theta}(0;t)-\hat{\theta}_0(0)|\leq
C\|\mathcal{Q}_1\theta_0\|_r,
\end{equation*}
where $C$ depends on $\mathcal{S}$ and the diameter of
$\mathcal{B}$.
\end{prop}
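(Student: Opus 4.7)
The plan is to deduce the four estimates of Proposition~\ref{prop2.18} by combining the uniform-in-$n$ Galerkin bounds of Proposition~\ref{prop2.14}, the convergence in Lemma~\ref{lem2.16}, the implicit-function bound of Proposition~\ref{prop2.6}, and direct computation with the evolution equations (B.1)--(B.2).

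\textit{Decay of $\|\tilde\theta\|_r$ and of $|\hat\theta(\pm 1)|$.} Since $P_n$ is a Fourier truncation, $\|P_n\theta_0\|_r\le\|\mathcal{Q}_1\theta_0\|_r$, so Proposition~\ref{prop2.14} gives the uniform bound $\|\tilde\theta_n(\cdot,t)\|_r\le\|\mathcal{Q}_1\theta_0\|_r\,e^{-\sigma t/36}$. Lemma~\ref{lem2.16} provides strong convergence $\tilde\theta_n\to\tilde\theta$ in $C([0,S];\dot H^1)$; combined with the uniform $\dot H^r$-bound, Banach--Alaoglu gives weak convergence in $\dot H^r$ at each fixed $t$, and weak lower semicontinuity of the norm delivers the first inequality. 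The bound on $|\hat\theta(\pm 1;t)|$ is then immediate from Proposition~\ref{prop2.6}: $|\hat\theta(1)|=|\hat\theta(-1)|=|g(\tilde\theta)|\le\tfrac12\|\tilde\theta\|_1\le\tfrac12\|\tilde\theta\|_r$.

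\textit{Area invariance and decay of $|L-2\sqrt{\pi\mathcal{S}}|$.} Differentiating $\mathcal{S}=\tfrac12\Im\int_0^{2\pi}z^* z_\alpha\,d\alpha$ and using $z_t=(2\pi/L)(T+iU)z_\alpha$, the periodicity of $z$ (ensured by (B.4)), and $|z_\alpha|=L/(2\pi)$, I obtain $\dot{\mathcal{S}}=-(L/(2\pi))\int_0^{2\pi}U\,d\alpha$. Substituting (\ref{2.2}), swapping integration order, and invoking the Plemelj identity $\oint d\omega/(\omega-\omega(\alpha'))=i\pi$ along the smooth closed curve $\omega$, the integral reduces to $\Re[i\pi L^{-1}\int\gamma\,d\alpha']=0$ since $\gamma$ is real; hence $\mathcal{S}$ is conserved. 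For the $L$-estimate, substituting $z_\alpha=(L/(2\pi))e^{i\alpha+i\theta}$ into the area formula gives $\mathcal{S}=(L^2/(8\pi^2))\Im J$ with $J=\int_0^{2\pi}d\alpha\int_0^{\alpha}d\alpha'\,e^{i\phi(\alpha)-i\phi(\alpha')}$ and $\phi=\alpha+\theta$, the cross term $(L/(4\pi))\Im[z(0)^*\int e^{i\alpha+i\theta}d\alpha]$ vanishing by periodicity via (B.4). Taylor-expanding $e^{i(\theta(\alpha)-\theta(\alpha'))}$ and using the Banach algebra property of Note~\ref{note1.1} yields $\Im J=2\pi+O(\|\tilde\theta\|_r+|\hat\theta(\pm 1)|)$, so $|L^2-4\pi\mathcal{S}|\le C\|\mathcal{Q}_1\theta_0\|_r\,e^{-\sigma t/36}$; dividing by $L+2\sqrt{\pi\mathcal{S}}\ge 2\pi-1>0$ yields the claimed estimate.

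\textit{Bound on $|\hat\theta(0;t)-\hat\theta_0(0)|$ and main obstacle.} Integrating (B.2), it suffices to show $|\int_0^{2\pi}T(1+\theta_\alpha)\,d\alpha|\le C\|\tilde\theta\|_r$. The invertibility of $I+A_\mu\mathcal{F}[\omega]$ on mean-zero functions (Note~\ref{note2.3}) together with Corollary~\ref{cortheta} gives $\|\gamma\|_0\le C\|\theta_{\alpha\alpha}\|_0\le C'\|\tilde\theta\|_r$; formula (\ref{2.2}) then delivers $\|U\|_{L^2}\le C''\|\tilde\theta\|_r$, and $T$ inherits this bound from its defining integral in (B.3). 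Integrating the resulting exponentially decaying estimate for $|\dot{\hat\theta}(0)|$ produces the claimed constant bound. The main obstacle is the algebraic bookkeeping in the expansion of $\mathcal{S}$ for the $L$-estimate: (B.4) must be invoked at exactly the right place to kill what would otherwise be an $O(1)$ contribution, and the $O(\|\tilde\theta\|_r)$ remainder must be extracted from a careful combination of quadratic products together with the constraint-enforced smallness of $\hat\theta(\pm 1)$.
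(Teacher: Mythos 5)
Your proposal is correct, and for most of the statement it follows the same route as the paper: the $\dot H^r$ decay of $\tilde\theta$ comes from the uniform Galerkin bound of Proposition \ref{prop2.14} (using $\|P_n\theta_0\|_r\le\|\mathcal{Q}_1\theta_0\|_r$) passed to the limit — the paper takes the limit of the norms along the strong convergence of Lemma \ref{lem5.3}, while your weak-compactness/lower-semicontinuity variant is a harmless (slightly more economical) substitute; the $\hat\theta(\pm1)$ bound is exactly the paper's use of (\ref{2.7}); the $L$ estimate is in substance identical to the paper's, which equates $\frac{L^2}{8\pi^2}\Im\int\omega_\alpha\omega^*d\alpha$ with $\frac{\mathcal{S}}{2\pi}\Im\int\omega_{0,\alpha}\omega_0^*d\alpha$ and bounds the difference of the two integrals by $C\|\tilde\theta\|_1$ — your Taylor expansion of $\Im J$ is the same computation in different clothing, including the use of (B.4) to kill the $z(0)^*$ cross term; and the $\hat\theta(0)$ bound is the paper's argument verbatim (integrate (B.2) using the $\|T\|_0\le C\|\tilde\theta\|_3$ estimate, i.e. (\ref{T0}), and the exponential decay). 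The one genuinely different step is area conservation: the paper observes $\frac{d\mathcal{S}}{dt}=-\frac{L}{2\pi}\int_0^{2\pi}U\,d\alpha=-\frac{L}{2\pi}\int_0^{2\pi}\frac{\partial\phi_2}{\partial\mathbf{n}}\,d\alpha$ and invokes Green's second identity for the harmonic interior potential, whereas you stay inside the vortex-sheet formulation, swap the order of integration in the Birkhoff--Rott representation of $U$ and use the Plemelj value $PV\oint\frac{d\omega}{\omega-\omega(\alpha')}=i\pi$ on the closed curve, so that $\int U\,d\alpha=\Re\big(\frac{i\pi}{L}\int\gamma\big)=0$ because $\gamma$ is real. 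Both are valid; your version is self-contained (no appeal to the bulk potentials) but requires noting that (B.4) closes the curve, that the near-circular curve is smooth and non-self-intersecting so Plemelj applies, and a short justification of interchanging the principal-value integral with the $\alpha'$ integration, while the paper's version avoids singular-integral manipulation at the cost of importing the fluid-mechanical interpretation of $U$ as a normal derivative of a harmonic potential.
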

\begin{remark}
We will prove Lemma \ref{lem2.16} and Proposition  \ref{prop2.18} in
\S5. Further, the result above together with Proposition
\ref{prop2.6} shows that $\theta(\alpha,t)-\hat{\theta}(0;t)$ goes
to 0 exponentially as $t\rightarrow \infty$.
\end{remark}

\noindent{\bf Proof of Theorem \ref{Thm1.7}:} This immediately
follows from Lemma \ref{lem2.16} and Proposition \ref{prop2.18}
since Lemma \ref{lem2.7} gives equivalence between (A.1)-(A.2) and
(B.1)-(B.4).

\section{Preliminary Lemmas} We will need to use a variety of routine
estimates for integral operators and other functions in terms of
$\tilde\theta$ and $\hat{\theta}(0)$.  Recall tangent angle of the
curve is
$\alpha+\theta(\alpha)=\alpha+\tilde\theta(\alpha)+\hat{\theta}(0)+\hat{\theta}(-1)e^{-i\alpha}+\hat{\theta}(1)e^{i\alpha}$,
where $\hat\theta(1)$ and $\hat\theta(-1)$ are determined by
$g(\tilde\theta)$.

The next lemma gives a bound for $\omega_\alpha$ in terms of ${\tilde \theta}$.
 \begin{lemma}
\label{lem3.1} Assume $ \| \tilde \theta \|_1 < \epsilon_1$ where
$\epsilon_1$ is small enough for Corollary \ref{cortheta} to apply.

If $\omega$ determined by ${\tilde \theta} \in \dot{H}^s$, then for
$s \ge 1$,
 \begin{equation}
 \label{1ststate}
\|\omega_\alpha\|_s\leq
C_1(\|\tilde\theta\|_s+1)\exp\left(C_2\|\tilde\theta\|_{s-1}\right),~~~~~~~~~
\Big\|\frac{1}{\omega_\alpha}\Big\|_s\leq
C_1(\|\tilde\theta\|_s+1)\exp\left(C_2\|\tilde\theta\|_{s-1}\right),
\end{equation}
where constants $C_1$ and $C_2$, depend only on $s$, and particularly for $s=1$,
$C_2=0$.

Further, if $\omega^{(1)}, \omega^{(2)}$ correspond respectively to
${\tilde \theta}^{(1)}, {\tilde \theta}^{(2)} \in \dot{H}^{s} $,
where $\| {\tilde \theta}^{(1)} \|_1 $, $\| {\tilde \theta}^{(2)}
\|_1 < \epsilon_1$, then for $s \ge 1$,
\begin{eqnarray}
\label{3rdstate}
 \|\omega_{\alpha}^{(1)} - \omega_\alpha^{(2)} \|_{s} \leq
 C_1 \| {\tilde \theta}^1 - {\tilde \theta}^2  \|_s
\exp{\left [ C_2 \left ( \|{\tilde \theta}^1 \|_{s} + \| {\tilde
\theta}^2 \|_s
\right )\right]},\\
\left\|\frac{1}{\omega_{\alpha}^{(1)}} -
\frac{1}{\omega_\alpha^{(2)}}\right \|_{s} \leq
 C_1 \| {\tilde \theta}^{(1)} - {\tilde \theta}^{(2)}  \|_s
\exp{\left [C_2 \left ( \|{\tilde \theta}^{(1)} \|_{s} + \| {\tilde
\theta}^{(2)} \|_s \right ) \right ]},
\label{4thstate}
\end{eqnarray}
while for $s\ge2$,
\begin{eqnarray}
\label{5thstate} \|\omega_\alpha^{(1)} - \omega_\alpha^{(2)}\|_s
&\leq& C_1\left(\|\tilde\theta^{(1)}-\tilde\theta^{(2)}\|_s
+\|\tilde\theta^{(2)}\|_s\|\tilde\theta^{(1)}-\tilde\theta^{(2)}\|_{s-1}\right)
\\
&\times&
\exp\left[C_2\big(\|\tilde\theta^{(1)}\|_{s-1}
+\|\tilde\theta^{(2)}\|_{s-1}\big)\right],\nonumber\\
\label{6thstate} \left\|\frac{1}{\omega_\alpha^{(1)}} -\frac{1}{
\omega_\alpha^{(2)}}\right\|_s &\leq&
C_1\left(\|\tilde\theta^{(1)}-\tilde\theta^{(2)}\|_s
+\|\tilde\theta^{(2)}\|_s\|\tilde\theta^{(1)}-\tilde\theta^{(2)}\|_{s-1}\right)
\\
&\times&
\exp\left[C_2\big(\|\tilde\theta^{(1)}\|_{s-1}
+\|\tilde\theta^{(2)}\|_{s-1}\big)\right],
\nonumber
\end{eqnarray}
where the constants $C_1$ and $C_2$ depend only on $s$.
 \end{lemma}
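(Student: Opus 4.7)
The plan is to exploit the explicit formula $\omega_\alpha(\alpha) = e^{i\alpha+if(\alpha)}$ with $f(\alpha)=\hat\theta(1)e^{i\alpha}+\hat\theta(-1)e^{-i\alpha}+\tilde\theta(\alpha)$, so that $1/\omega_\alpha = e^{-i\alpha-if(\alpha)}$. Multiplication by $e^{\pm i\alpha}$ merely shifts Fourier modes by one, so $\|e^{\pm i\alpha}g\|_s$ and $\|g\|_s$ are equivalent up to constants depending only on $s$, and it suffices to bound $\|e^{\pm if}\|_s$. Proposition \ref{prop2.6} gives $|\hat\theta(\pm1)|\le\frac12\|\tilde\theta\|_1$ and the analogous Lipschitz bound for the difference, so $\|f\|_s\le C\|\tilde\theta\|_s$ and $\|f_1-f_2\|_s\le C\|\tilde\theta^{(1)}-\tilde\theta^{(2)}\|_s$ for all $s\ge0$. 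Moreover, for $\|\tilde\theta\|_1<\epsilon_1$, Sobolev embedding yields $|f|_\infty\le C\|f\|_1\le C\epsilon_1$, and, more generally, $|f|_\infty\le C\|\tilde\theta\|_{s-1}$ whenever $s-1\ge 1$.

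For the first pair of estimates (\ref{1ststate}), I would expand $e^{if}=\sum_{n=0}^\infty(if)^n/n!$ and use the tame product inequality $\|uv\|_s\le C_s(\|u\|_s|v|_\infty+|u|_\infty\|v\|_s)$, valid for $s\ge1$. Iterating yields $\|f^n\|_s\le nC_s^{n-1}\|f\|_s|f|_\infty^{n-1}$, and termwise summation gives $\|e^{if}\|_s\le 1+C\|f\|_s e^{C|f|_\infty}$. For $s=1$ the exponent $C|f|_\infty\le C\epsilon_1$ is bounded, so the exponential is absorbed into $C_1$, producing exactly the form with $C_2=0$; for $s\ge2$ the bound $|f|_\infty\le C\|\tilde\theta\|_{s-1}$ produces the stated form $C_1(1+\|\tilde\theta\|_s)\exp(C_2\|\tilde\theta\|_{s-1})$. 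The bound on $1/\omega_\alpha$ is identical, replacing $f$ by $-f$.

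For the difference bounds I would use the integral identity
\begin{equation*}
e^{if_1}-e^{if_2} = i(f_1-f_2)\int_0^1 e^{i(tf_1+(1-t)f_2)}\,dt.
\end{equation*}
For (\ref{3rdstate})--(\ref{4thstate}), apply the Banach algebra property in $H^s$ together with the first part applied to the integrand at each $t$, noting $\max_t\|tf_1+(1-t)f_2\|_{s-1}\le\|f_1\|_{s-1}+\|f_2\|_{s-1}$; the resulting factor $(1+\|\tilde\theta^{(1)}\|_s+\|\tilde\theta^{(2)}\|_s)$ is absorbed into the exponential via $1+x\le e^x$, yielding $\exp[C_2(\|\tilde\theta^{(1)}\|_s+\|\tilde\theta^{(2)}\|_s)]$. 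For the sharper tame estimates (\ref{5thstate})--(\ref{6thstate}), valid for $s\ge2$, I would instead apply $\|uv\|_s\le C(\|u\|_s|v|_\infty+|u|_\infty\|v\|_s)$ with $u=f_1-f_2$ and $v$ the integral: $|v|_\infty\le1$ trivially, $|u|_\infty\le C\|\tilde\theta^{(1)}-\tilde\theta^{(2)}\|_{s-1}$ by Sobolev (since $s-1\ge1$), and $\|v\|_s$ is controlled by the first part, leaving $\exp[C_2(\|\tilde\theta^{(1)}\|_{s-1}+\|\tilde\theta^{(2)}\|_{s-1})]$ in the exponent and producing the extra $\|\tilde\theta^{(2)}\|_s\|\tilde\theta^{(1)}-\tilde\theta^{(2)}\|_{s-1}$ term.

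The main technical obstacle is the bookkeeping in step two: arranging the tame (Moser-type) product estimates so that $\|\tilde\theta\|_s$ appears only linearly outside the exponential while $\|\tilde\theta\|_{s-1}$ (never $\|\tilde\theta\|_s$) appears inside, and, in particular, that the smallness $\|\tilde\theta\|_1<\epsilon_1$ is exactly what makes $C_2=0$ at $s=1$. Everything else reduces to power series manipulation and Sobolev embedding, with Proposition \ref{prop2.6} supplying the linear control of $\hat\theta(\pm1)$ by $\tilde\theta$.
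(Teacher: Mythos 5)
Your proposal is correct, and it reaches all five estimates, but by technical devices that differ from the paper's. The paper also starts from $\omega_\alpha=e^{i\alpha+i\theta-i\hat\theta(0)}$ and controls $\hat\theta(\pm1)$ through Proposition \ref{prop2.6} (via Corollary \ref{cortheta}), but for the tame bound (\ref{1ststate}) it uses the identity $D^s\omega_\alpha=D^{s-1}\bigl[i(1+\theta_\alpha)\omega_\alpha\bigr]$ together with the Banach algebra property and a crude chain-rule bound $\|D^k\omega_\alpha\|_0\le C_1\exp(C_2\|\theta_\alpha\|_{k-1})$, which is what puts $\|\tilde\theta\|_s$ linearly outside and $\|\tilde\theta\|_{s-1}$ inside the exponential; you instead get the same structure from the Moser-type product inequality $\|uv\|_s\le C_s(\|u\|_s|v|_\infty+|u|_\infty\|v\|_s)$ applied termwise to the power series of $e^{if}$. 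That inequality is standard but is not among the tools the paper states (it only records the Banach algebra property and, later, $\|hg\|_s\le C_s(\|h\|_{s-1}\|g\|_s+\|h\|_s\|g\|_{s-1})$), so in a self-contained write-up you should prove or cite it. For the differences, the paper factors $\omega^{(1)}_\alpha-\omega^{(2)}_\alpha=\bigl[e^{i(\theta^{(1)}-\hat\theta^{(1)}(0)-\theta^{(2)}+\hat\theta^{(2)}(0))}-1\bigr]e^{i\alpha+i\theta^{(2)}-i\hat\theta^{(2)}(0)}$ and estimates the series of $e^{ih}-1$, while you use the convexity identity $e^{if_1}-e^{if_2}=i(f_1-f_2)\int_0^1e^{i(tf_1+(1-t)f_2)}\,dt$; both work. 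One small bookkeeping point: your symmetric identity produces the product term $(\|\tilde\theta^{(1)}\|_s+\|\tilde\theta^{(2)}\|_s)\|\tilde\theta^{(1)}-\tilde\theta^{(2)}\|_{s-1}$ in (\ref{5thstate})--(\ref{6thstate}), so to reach the stated asymmetric form you must split $\|\tilde\theta^{(1)}\|_s\le\|\tilde\theta^{(1)}-\tilde\theta^{(2)}\|_s+\|\tilde\theta^{(2)}\|_s$ and absorb $\|\tilde\theta^{(1)}-\tilde\theta^{(2)}\|_{s-1}\le\|\tilde\theta^{(1)}\|_{s-1}+\|\tilde\theta^{(2)}\|_{s-1}$ into the exponential (or simply use the paper's one-sided factorization $e^{if_2}(e^{i(f_1-f_2)}-1)$, which yields $\|\tilde\theta^{(2)}\|_s$ directly); this is a one-line fix, not a gap. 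Your approach buys a uniform treatment of all cases (including $C_2=0$ at $s=1$ from $|f|_\infty\le C\epsilon_1$) from a single product inequality, while the paper's recursion $\omega_{\alpha\alpha}=i(1+\theta_\alpha)\omega_\alpha$ keeps everything inside the Banach algebra toolkit it has already declared.
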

 \begin{proof}
For the formula $\omega_\alpha=e^{i\alpha+i\theta-i {\hat \theta} (0)}$, it is
easy to obtain
 $$\|\omega_{\alpha}\|_0\leq C.$$
 Let us consider for $0<k\leq s$.
The chain rule gives
\begin{eqnarray}
D^k \omega_\alpha= \sum_{\beta_1+\cdots+\beta_{\mu}=k,\beta_i\geq 1}
C_\beta D ^{\beta_1}(\alpha+\theta)\cdots D
^{\beta_\mu}(\alpha+\theta) \omega_\alpha. \nonumber
\end{eqnarray}
So by Sobolev embedding Theorem, $|f |_{\infty}\leq C \|f\|_1$, we
have
\begin{equation}
\label{boundomega} \|D^k \omega_{\alpha}\|_0\leq C
\|1+\theta_{\alpha}\|_{k-1}(1+\|\theta_{\alpha}\|_{k-1}+\cdots+\|\theta_{\alpha}\|_{k-1}^{k-1})\leq
C_1 \exp{(C_2\|\theta_{\alpha}\|_{k-1})},
\end{equation}
where the constants, $C_1$ and $C_2$, depend only on $s$.

For $s=1$, we have
$$\|D\omega_\alpha\|_0=\|1+\theta_\alpha\|_0\leq
C(1+\|\tilde\theta\|_1).$$ For $s\ge2$, we note
$$D^s\omega_\alpha=D^{s-1}\big[i(1+\theta_\alpha) \omega_\alpha\big].$$ Hence, by noting Banach algebra
property (see Note \ref{note1.1}), Corollary \ref{cortheta} and
(\ref{boundomega}), we get
$$\|D^s\omega_\alpha\|_0\leq \Big\|
i(1+\theta_\alpha) \omega_\alpha\Big\|_{s-1}\leq
C_1(\|\tilde\theta\|_s+1)\exp\left(C_2\|\tilde\theta\|_{s-1}\right),$$where
the constants, $C_1$ and $C_2$, depend only on $s$. Since
$\frac{1}{\omega_\alpha}= e^{-i \alpha - i \theta (\alpha) +i {\hat
\theta} (0)} $, the preceding arguments are clearly applied to
$\frac{1}{\omega_\alpha}$ as well and (\ref{1ststate}) follows from
Corollary \ref{cortheta} for a modified constant $C_2$.

To prove (\ref{3rdstate}), we note that
$$ \omega_\alpha^{(1)} - \omega_\alpha^{(2)} =
\left [ e^{ i \big(\theta^{(1)}-\hat{\theta}^{(1)}(0) -
\theta^{(2)}+\hat{\theta}^{(2)}(0) \big)} - 1 \right ] e^{i\alpha+i
\theta^{(2)} - i {\hat \theta}^{(2)} (0) }
$$
From series representation of the exponential and application of
Banach algebra
property of $\| . \|_s$ norm to each term in the series, we deduce
\begin{multline*} \Big\| e^{ i
\big(\theta^{(1)}-\hat{\theta}^{(1)}(0) -
\theta^{(2)}+\hat{\theta}^{(2)}(0) \big)} - 1  \Big\|_s \\
\le C_1 \|  \theta^{(1)}-\hat{\theta}^{(1)}(0) -
\theta^{(2)}+\hat{\theta}^{(2)}(0)  \|_s \exp \left ( C_2 \|
\theta^{(1)}-\hat{\theta}^{(1)}(0) -
\theta^{(2)}+\hat{\theta}^{(2)}(0)  \Big\|_s \right
),\end{multline*} where the constants, $C_1$ and $C_2$, depend only
on $s$. Using Banach algebra properties and Corollary
\ref{cortheta}, (\ref{3rdstate}) follows. Almost identical arguments
are applied to prove (\ref{4thstate}).

Further, if $s\ge2$  we have
\begin{multline*}
\left\|D^{s}(\omega_\alpha^{(1)} - \omega_\alpha^{(2)})\right\|_0
=\left\|D^{s-1}\left[i(1+\theta_\alpha^{(1)})\omega_\alpha^{(1)}
-i(1+\theta_\alpha^{(2)})\omega_\alpha^{(2)}\right]
\right\|_0\\
\leq
C_1\left(\|\tilde\theta^{(1)}-\tilde\theta^{(2)}\|_s+\|\tilde\theta^{(2)}\|_s\|\tilde\theta^{(1)}-\tilde\theta^{(2)}\|_{s-1}\right)
\exp\left(C_2\|\tilde\theta^{(1)}\|_{s-1}+C_2\|\tilde\theta^{(2)}\|_{s-1}\right),
\end{multline*}
where the constants, $C_1$ and $C_2$, depend only on $s$. So
(\ref{5thstate}) follows. Almost identical arguments are applied for
(\ref{6thstate}).
\end{proof}

In simplifying our integral operators, we find
divided
differences to be very useful.
\begin{definition}
\label{def3.2}
We define
divided differences $q_1$ and $q_2$ as follows:
\begin{displaymath}
q_1[\omega](\alpha,\alpha')=\frac{\omega(\alpha)-\omega(\alpha')}{
\alpha-\alpha'}=\int_0^1
\omega_\alpha(t\alpha+(1-t)\alpha')dt,
\end{displaymath}
\begin{eqnarray}
q_2[\omega](\alpha,\alpha')&=&\frac{\omega(\alpha)-\omega(\alpha')-
\omega_{\alpha}(\alpha)(\alpha-\alpha')}{(\alpha-\alpha')^2}
=\int_0^1(t-1)\omega_{\alpha\alpha}((1-t)\alpha+t\alpha')dt.\nonumber
\end{eqnarray}
\end{definition}

\begin{prop}
\label{prop3.4} There exists $\epsilon_1 > 0$
so that
$\|\tilde\theta\|_1\leq \epsilon_1$ implies
\begin{eqnarray}
\label{3.1}
|q _1 [\omega] (\alpha, \alpha') | \ge \frac{1}{8} ~~,~~{\rm for} ~~
0 < |\alpha -\alpha'| \le \pi
\end{eqnarray}
\end{prop}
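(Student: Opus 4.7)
\begin{proofof}{Proposition \ref{prop3.4} (plan)}
The strategy is to compare $q_1[\omega]$ to the reference $q_1[\omega_0]$, bound the unperturbed quantity from below, and absorb the perturbation by smallness of $\|\tilde\theta\|_1$.

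First I would compute the reference value. Since $\omega_{0,\alpha}(\beta)=e^{i\beta}$, the integral form in Definition \ref{def3.2} gives
\begin{equation*}
q_1[\omega_0](\alpha,\alpha')=\int_0^1 e^{i(t\alpha+(1-t)\alpha')}\,dt
= e^{i(\alpha+\alpha')/2}\,\frac{\sin\tfrac{1}{2}(\alpha-\alpha')}{\tfrac{1}{2}(\alpha-\alpha')}.
\end{equation*}
Since $x\mapsto \sin(x)/x$ is decreasing on $(0,\pi/2]$, we get $|q_1[\omega_0](\alpha,\alpha')|\ge 2/\pi$ for $0<|\alpha-\alpha'|\le\pi$.

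Next I would estimate the difference. Writing $\omega_\alpha(\beta)=e^{i\beta}\,e^{i\phi(\beta)}$ with $\phi(\beta)=\hat\theta(1)e^{i\beta}+\hat\theta(-1)e^{-i\beta}+\tilde\theta(\beta)$, the same integral formula yields
\begin{equation*}
q_1[\omega](\alpha,\alpha')-q_1[\omega_0](\alpha,\alpha')
=\int_0^1 e^{i(t\alpha+(1-t)\alpha')}\bigl(e^{i\phi(t\alpha+(1-t)\alpha')}-1\bigr)dt,
\end{equation*}
so $|q_1[\omega]-q_1[\omega_0]|\le |e^{i\phi}-1|_\infty \le |\phi|_\infty$ by the elementary inequality $|e^{iy}-1|\le|y|$. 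By the Sobolev embedding $|\cdot|_\infty\le C\|\cdot\|_1$ and Proposition \ref{prop2.6}, which gives $|\hat\theta(\pm1)|=|g(\tilde\theta)|\le\tfrac{1}{2}\|\tilde\theta\|_1$, we obtain $|\phi|_\infty\le C\|\tilde\theta\|_1\le C\epsilon_1$.

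Combining the two estimates, $|q_1[\omega](\alpha,\alpha')|\ge 2/\pi - C\epsilon_1$, and choosing $\epsilon_1>0$ small enough (compatible with the hypothesis of Corollary \ref{cortheta}) so that $C\epsilon_1\le 2/\pi-1/8$ yields the claim. No obstacle is anticipated here; everything reduces to the explicit reference computation plus a triangle-inequality perturbation bound, the only quantitative point being that the $1/8$ lower bound is well below the unperturbed value $2/\pi$, leaving ample room for the $O(\epsilon_1)$ perturbation.
\end{proofof}
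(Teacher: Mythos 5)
Your proposal is correct and follows essentially the same route as the paper: compare $q_1[\omega]$ with the circular reference, bound the perturbation by $C\|\tilde\theta\|_1$ via the Sobolev embedding and Proposition \ref{prop2.6}, and absorb it into a lower bound for the unperturbed divided difference. The only differences are cosmetic (you use the $\int_0^1$ parametrization and the sharp lower bound $2/\pi$, while the paper uses the $\int_{\alpha'}^{\alpha}$ form and the cruder bound $1/4$ with the inequality $|e^{i\zeta}-e^{i\zeta'}|\le\sqrt{2}|\zeta-\zeta'|$).
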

\begin{proof}
We note that
$$ q_1 [\omega] (\alpha, \alpha') =
\frac{\int_{\alpha'}^{\alpha}
e^{i\tau+i\tilde\theta(\tau)+i\hat{\theta}(1)e^{i\tau}+i\hat{\theta}(-1)e^{-i\tau}}d\tau
} {\alpha-\alpha'}.
$$ Further,
\begin{eqnarray*}
&&\Big|\frac{\int_{\alpha'}^{\alpha}e^{i\tau+i\tilde\theta(\tau)+i\hat{\theta}(1)e^{i\tau}+i\hat{\theta}(-1)e^{-i\tau}}d\tau}{\alpha-\alpha'}
-\frac{\int_{\alpha'}^{\alpha}e^{i\tau}d\tau}{\alpha-\alpha'}\Big|\nonumber\\
&=&
\Big|\frac{\int_{\alpha'}^{\alpha}e^{i\tau}(e^{i\tilde\theta(\tau)+i\hat{\theta}(1)e^{i\tau}
+i\hat{\theta}(-1)e^{-i\tau}}-1)d\tau}{\alpha-\alpha'}\Big|\\
&\leq&
2\sqrt{2}\max_{\tau\in[0,2\pi]}\big|\tilde\theta(\tau)+\hat{\theta}(1)e^{i\tau}+\hat{\theta}(-1)e^{-i\tau}\big|.\nonumber
\end{eqnarray*}
This bound is a consequence of the inequality
\begin{eqnarray}
|e^{i\zeta}-e^{i\zeta'}|\leq \sqrt{2}|\zeta-\zeta'|,\mbox{ for all
} \zeta,\,\zeta'\mbox{ in }\mathbb{R}.\nonumber
\end{eqnarray}
We choose $\epsilon_1>0$ small enough so that
Proposition \ref{prop2.6} holds and
from Sobolev
embedding theorem,
$$2\sqrt{2}\max_{\tau\in[0,2\pi]}\big|\tilde\theta(\tau)+\hat{\theta}(1)e^{i\tau}+\hat{\theta}(-1)e^{-i\tau}\big|\leq
c\|\tilde\theta\|_1\leq \frac{1}{8},$$ where $c$ is some constant.

 It is easy to see that
\begin{eqnarray}
\Big|\frac{\int_{\alpha'}^{\alpha}e^{i\tau}d\tau}{\alpha-\alpha'}\Big|\geq
\frac{1}{4}, \mbox{ for }0<|\alpha-\alpha'|\leq\pi.\nonumber
\end{eqnarray}
Thus, if $\|\tilde\theta\|_1\leq \epsilon_1$, we have
\begin{eqnarray}
\Big|\frac{\int_{\alpha'}^{\alpha}
e^{i\tau+i\tilde\theta(\tau)+i\hat{\theta}(1)e^{i\tau}+i\hat{\theta}(-1)e^{-i\tau}}d\tau}{\alpha-\alpha'}\Big|\geq
\frac{1}{8}.\nonumber
\end{eqnarray}
 \end{proof}

\begin{lemma}{(See \cite{AD1} or appendix for proof)}
\label{lem3.3} Let $\omega_\alpha \in H^{k}
\big(\mathbb{T}[0,2\pi]\big)$ for $k \ge 0$. Then $D_\alpha^k q_1,
D_{\alpha'}^k q_1 \in H^0 [a, a+2 \pi]$ in both variables $\alpha$
or $\alpha'$ and satisfy bounds
$$ \| D_\alpha^k q_1[\omega] \|_{0} \leq C \|\omega_\alpha\|_{k} ~~~~,~~~~~
\| D_{\alpha'}^k q_1[\omega] \|_{0} \leq C \|\omega_\alpha\|_{k}$$
with $C$ only depending on $k$ (in particular independent of $a$).
Further if $ \omega_{\alpha \alpha} \in H^{k} \big ( \mathbb{T} [0,
2 \pi] \big ) $ for $k \ge 0$, then $D_\alpha^k q_2, D_{\alpha'}^k
q_2 \in H^0 [a, a+2 \pi]$ in both variables $\alpha$ and $\alpha'$
and satisfy
$$ \| D_\alpha^k q_2 [\omega] \|_{0} \leq C \|\omega_{\alpha\alpha} \|_{k}
~~~~,~~~~~ \| D_{\alpha'}^k q_2
[\omega] \|_{0} \leq C
\|\omega_{\alpha \alpha} \|_{k}$$ with $C$ only depending on $k$.
\end{lemma}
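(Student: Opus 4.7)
The plan is to differentiate the integral representations of $q_1$ and $q_2$ under the integral sign, apply Minkowski's integral inequality, and then exploit an affine change of variables together with the $2\pi$-periodicity of $\omega_\alpha$ and $\omega_{\alpha\alpha}$. Since the divided differences have the explicit representations given in Definition \ref{def3.2}, differentiating termwise gives
$$D_\alpha^k q_1[\omega](\alpha,\alpha') = \int_0^1 t^k (D^k\omega_\alpha)(t\alpha + (1-t)\alpha')\,dt,$$
$$D_\alpha^k q_2[\omega](\alpha,\alpha') = \int_0^1 (t-1)(1-t)^k (D^k\omega_{\alpha\alpha})((1-t)\alpha + t\alpha')\,dt,$$
with analogous formulas for $D_{\alpha'}^k$. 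Applying Minkowski to pull the $dt$ outside the $L^2$ norm on $[a,a+2\pi]^2$ reduces each estimate to controlling, for each fixed $t$, integrals of the form $\int\int |(D^k\omega_\alpha)(t\alpha+(1-t)\alpha')|^2\,d\alpha\,d\alpha'$.

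For each fixed $\alpha'$ and $t\in(0,1]$, I substitute $u=t\alpha+(1-t)\alpha'$ with $du=t\,d\alpha$. As $\alpha$ traverses an interval of length $2\pi$, $u$ traverses an interval of length $2\pi t\le 2\pi$, so $2\pi$-periodicity of $|D^k\omega_\alpha|^2$ gives
$$\int_a^{a+2\pi}|(D^k\omega_\alpha)(t\alpha+(1-t)\alpha')|^2\,d\alpha = \frac{1}{t}\int_{u_0}^{u_0+2\pi t}|D^k\omega_\alpha(u)|^2\,du\le \frac{1}{t}\|D^k\omega_\alpha\|_0^2.$$
Integrating in $\alpha'$ contributes a factor $2\pi$, and Minkowski then yields
$$\|D_\alpha^k q_1\|_0 \le \sqrt{2\pi}\,\|D^k\omega_\alpha\|_0\int_0^1 t^{k-1/2}\,dt = \frac{2\sqrt{2\pi}}{2k+1}\|D^k\omega_\alpha\|_0 \le C_k\|\omega_\alpha\|_k.$$
The $q_2$ bound follows from the same recipe with the substitution $u=(1-t)\alpha+t\alpha'$ (Jacobian $1-t$), producing a weight $(1-t)^{k+1/2}$ whose integral over $[0,1]$ is finite.

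The bounds on $D_{\alpha'}^k q_1$ are immediate from the symmetry $q_1[\omega](\alpha,\alpha')=q_1[\omega](\alpha',\alpha)$; the bounds on $D_{\alpha'}^k q_2$ are obtained by the analogous substitution $u=(1-t)\alpha+t\alpha'$ viewed as a function of $\alpha'$ (Jacobian $t$), which produces a weight $t^{k+1/2}$. The constants depend only on $k$ and not on $a$, since after the change of variables the resulting one-dimensional integrals are full-period $L^2$ norms of a periodic function.

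The only delicate point is integrability of the Jacobian factor $t^{-1}$ (resp.\ $(1-t)^{-1}$) near the endpoint of the $t$-integration. This is precisely where the factors $t^k$ and $(1-t)^k$ coming from differentiating under the integral do their work: for every $k\ge0$ the net exponent $k-1/2$ (resp.\ $k+1/2$) is $>-1$, so the $dt$ integral converges. Beyond this bookkeeping, the argument is essentially a routine application of Fubini, Minkowski and translation invariance, and nothing beyond the hypothesis $\omega_\alpha\in H^k$ (respectively $\omega_{\alpha\alpha}\in H^k$) is required.
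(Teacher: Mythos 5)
Your differentiation formulas and the Minkowski-plus-change-of-variables strategy are sound, but as written you prove a weaker family of bounds than the lemma asserts and than the paper actually uses. The norm in Lemma \ref{lem3.3} is the $L^2[a,a+2\pi]$ norm in \emph{one} variable with the other held fixed, uniformly in the fixed variable and in $a$; this is what gets invoked in Lemmas \ref{lem3.5} and \ref{lem3.8}, where one needs $\big(\int_{\alpha-\pi}^{\alpha+\pi}\big|D_\alpha^k q_i(\alpha,\alpha')\big|^2\,d\alpha'\big)^{1/2}$ bounded at each fixed $\alpha$. You instead work with the $L^2$ norm over the square $[a,a+2\pi]^2$, and all of your one-variable intermediate estimates are of the ``same-variable'' type: $D_\alpha^k q_1$ measured in $L^2_\alpha$ (Jacobian $t^{-1}$, tamed by $t^k$), $D_{\alpha'}^k q_1$ in $L^2_{\alpha'}$ via the symmetry of $q_1$, and the corresponding cases for $q_2$. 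The mixed cases --- $D_\alpha^k q_1$ and $D_\alpha^k q_2$ measured in $L^2_{\alpha'}$ at fixed $\alpha$, and conversely --- are never treated: the symmetry of $q_1$ only exchanges the two same-variable statements, and $q_2$ is not symmetric at all. Yet these mixed bounds are precisely the ones the paper proves (it fixes $\alpha$, integrates in $\alpha'$, and uses a Cauchy--Schwarz splitting with the weight $(1-t)^{1/4}$ so the Jacobian contributes an integrable $(1-t)^{-1/2}$) and precisely the ones needed downstream.

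The gap is fixable by your own method, but note the mechanism differs from what you single out as ``the only delicate point.'' In the mixed case the substitution $u=t\alpha+(1-t)\alpha'$ carried out in $\alpha'$ produces the factor $(1-t)^{-1/2}$, against which the derivative weight $t^k$ is useless; convergence of the $t$-integral holds only because $(1-t)^{-1/2}$ is integrable on its own, not because ``the factors $t^k$ and $(1-t)^k$ do their work'' (indeed even in your same-variable case the exponent $-1/2$ at $k=0$ is already $>-1$ without their help). So redo the Minkowski estimate with the $L^2$ norm taken in the variable \emph{not} being differentiated, at fixed value of the other variable, checking finiteness of $\int_0^1 t^k(1-t)^{-1/2}\,dt$ and $\int_0^1 (1-t)^{k+1}t^{-1/2}\,dt$ (and their counterparts), and record that the bounds are uniform in the fixed variable and in $a$. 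With that addition your argument gives the full lemma by a route (Minkowski rather than the paper's weighted Cauchy--Schwarz) that is, if anything, slightly cleaner; also, your stated weight $t^{k+1/2}$ for $D_{\alpha'}^k q_2$ should be $t^{k-1/2}(1-t)$, a harmless bookkeeping slip since it is still integrable.
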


\begin{lemma}
\label{lem3.5}
Let  $\omega^{(1)}, \omega^{(2)} \in H^{k+1}
\big(\mathbb{T}[0,2\pi]\big)$ for integer
$k\geq 0$.
Suppose
$$ |q_1[\omega^{(1)}](\alpha,\alpha')|\geq \frac{1}{8}, \mbox{ for }
0<|\alpha-\alpha'|\leq \pi.$$
Then
\begin{eqnarray}
\Big(\int_{\alpha-\pi}^{\alpha+\pi}\Big|D_\alpha^k
\frac{q_2[\omega^{(2)}](\alpha,\alpha')}{q_1[\omega^{(1)}](\alpha,\alpha')}
\Big|^2d\alpha'\Big)^{\frac{1}{2}}\leq
C_1\|\omega^{(2)}_\alpha\|_{k+1}
\exp(C_2\|\omega^{(1)}_\alpha\|_{k}), \label{5.3.2}
\end{eqnarray}
\begin{eqnarray*}
\Big(\int_{\alpha-\pi}^{\alpha+\pi}\Big|D_\alpha^k
\frac{q_2[\omega^{(2)}](\alpha',\alpha)}{q_1[\omega^{(1)}](\alpha',\alpha)}
\Big|^2d\alpha'\Big)^{\frac{1}{2}}\leq
C_1\|\omega^{(2)}_\alpha\|_{k+1}
\exp(C_2\|\omega^{(1)}_\alpha\|_{k}),
\end{eqnarray*}
where $C_1$ and $C_2$ depend on $k$ alone, but not on $\alpha$.
\end{lemma}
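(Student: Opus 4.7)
The approach is to expand $D_\alpha^k\bigl(q_2[\omega^{(2)}]/q_1[\omega^{(1)}]\bigr)$ by the generalized Leibniz rule combined with Faà di Bruno's formula for the derivatives of $1/q_1$, and then estimate each resulting term by Hölder's inequality: one factor will be placed in $L^2_{\alpha'}$ using the bounds of Lemma \ref{lem3.3}, and every other factor will be bounded in $L^\infty_{\alpha'}$ using the integral representations of $q_1, q_2$ together with the one-dimensional Sobolev embedding $|f|_\infty\le C\|f\|_1$.

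First I would write out the expansion
$$D_\alpha^k\frac{q_2[\omega^{(2)}]}{q_1[\omega^{(1)}]} = \sum_{\mathbf{j}} c_{\mathbf{j}}\,\frac{\bigl(D_\alpha^{j_0}q_2[\omega^{(2)}]\bigr)\prod_{i=1}^p\bigl(D_\alpha^{j_i}q_1[\omega^{(1)}]\bigr)}{\bigl(q_1[\omega^{(1)}]\bigr)^{p+1}},$$
the sum being finite and indexed by tuples $(j_0,\ldots,j_p)$ with $j_0+\cdots+j_p=k$ and $j_i\ge 1$ for $i\ge 1$. The hypothesis $|q_1[\omega^{(1)}]|\ge 1/8$ on $0<|\alpha-\alpha'|\le\pi$ makes each denominator a uniformly bounded multiplier, contributing only a harmless constant depending on $p$.

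Next, within each term I would single out the factor of highest derivative order and estimate it in $L^2_{\alpha'}$ via Lemma \ref{lem3.3}, which yields either $\|D_\alpha^j q_2[\omega^{(2)}]\|_{L^2_{\alpha'}}\le C\|\omega_\alpha^{(2)}\|_{j+1}$ or $\|D_\alpha^j q_1[\omega^{(1)}]\|_{L^2_{\alpha'}}\le C\|\omega_\alpha^{(1)}\|_j$ for some $j\le k$. Every remaining factor has derivative order strictly less than $k$ and is controlled pointwise using the integral representation
$$D_\alpha^j q_1[\omega](\alpha,\alpha') = \int_0^1 t^j\, D^j\omega_\alpha\bigl(t\alpha+(1-t)\alpha'\bigr)\,dt$$
(and the analogous formula for $q_2$), which gives $|D_\alpha^j q_1[\omega^{(i)}]|\le |D^j\omega_\alpha^{(i)}|_\infty\le C\|\omega_\alpha^{(i)}\|_{j+1}$. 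Assembling these estimates through Hölder's inequality produces, for each term, a bound linear in $\|\omega_\alpha^{(2)}\|_{k+1}$ times a polynomial in $\|\omega_\alpha^{(1)}\|_k$.

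Finally, summing over the finitely many partitions and using the elementary inequality $x^n\le n!\,e^x$ for $x\ge 0$ converts the polynomial factor into $\exp\bigl(C_2\|\omega_\alpha^{(1)}\|_k\bigr)$, giving the claimed bound (\ref{5.3.2}). The companion inequality, with $\alpha$ and $\alpha'$ interchanged inside $q_1$ and $q_2$, follows from an identical argument using the $D_{\alpha'}^k$ variant in Lemma \ref{lem3.3}. The main technical obstacle is the bookkeeping in the third step: one must verify that in every term produced by the Faà di Bruno expansion at most one factor carries derivative order equal to $k$ (so that the Sobolev embedding genuinely applies to all the other factors), and that no term produces more than one $\omega^{(2)}$-factor, which is what preserves the linear dependence on $\|\omega_\alpha^{(2)}\|_{k+1}$. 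This combinatorial verification is straightforward but tedious, and is the only point where the assumption $\omega^{(2)}\in H^{k+1}$ is used in full strength.
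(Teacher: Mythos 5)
Your proof is correct and follows essentially the same route as the paper: a Leibniz/chain-rule expansion of $D_\alpha^k\bigl(q_2[\omega^{(2)}]/q_1[\omega^{(1)}]\bigr)$, the $L^2_{\alpha'}$ bounds of Lemma \ref{lem3.3} on the highest-order factor, $L^\infty$ control of the lower-order factors via the integral representations and Sobolev embedding, the lower bound $|q_1[\omega^{(1)}]|\ge\frac18$ for the denominators, and absorption of the polynomial in $\|\omega^{(1)}_\alpha\|_k$ into an exponential. The only cosmetic difference is that you display the Fa\`a di Bruno terms explicitly, whereas the paper packages the same bookkeeping into the intermediate estimate $\|D_\alpha^j(1/q_1)\|_0\le C_1\exp\bigl(C_2\|\omega^{(1)}_\alpha\|_j\bigr)$; the substance is identical, including the variable-switching argument for the second inequality.
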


\begin{proof}
Clearly for $k=0$, (\ref{5.3.2}) holds. Consider $k\ge1$. It is easy
to have
$$ D_\alpha^k \frac{q_2}{q_1} = \sum_{j=0}^k C_{k,j} D_\alpha^{k-j} q_2
D^{j}_\alpha \frac{1}{q_1}. $$ We have from Lemma \ref{lem3.3}, for
$0 \le j \le k$,
$$ \| D_\alpha^{k-j} q_2 [\omega^{(2)} ](\alpha, \alpha') \|_0
\le C_1 \| \omega^{(2)}_{\alpha \alpha} \|_{k-j}  $$ and
$$ \| D_\alpha^{j} q_1 [\omega^{(1)} ](\alpha, \alpha') \|_0
\le C_1 \| \omega^{(1)}_{\alpha} \|_{j}.  $$ Further, since $q_1$ is
bounded below by $\frac{1}{8}$, it follows that for $0 \le j \le k$,
Hence
$$\| D^j_\alpha \frac{1}{q_1} \|_0 \le C \exp \left [ c_2
\sum_{m=1}^j \| D^m_\alpha q_1 \|_0 \right ] \le C_1 \exp \left [
C_2 \| \omega^{(1)}_\alpha \|_j \right ].$$
\begin{equation*}\left\|D_\alpha^k \frac{q_2}{q_1}\right\|_0\leq \sum_{j=0}^k
C_{k,j}\left\| D_\alpha^{k-j} q_2 D^{j}_\alpha
\frac{1}{q_1}\right\|_0\leq C_1\|\omega^{(2)}_\alpha\|_{k+1}
\exp(C_2\|\omega^{(1)}_\alpha\|_{k}),
\end{equation*}
since $\| \frac{1}{q_1} D_\alpha^k q_2  \|_0 \le
|\frac{1}{q_1}|_\infty \| D_\alpha^k q_2 \|_0 $ and for $1 \le j \le
k$,
$$\| D_\alpha^{k-j} q_2 ~~D_\alpha^{j} \frac{1}{q_1} \|_0 \le
| D_\alpha^{k-j} q_2 |_\infty
\| D_\alpha^j \frac{1}{q_1} \|_0
\le c \| q_2 \|_{k-j+1} \| D_\alpha^j \frac{1}{q_1}  \|_0 .$$
The second part follows in a very similar manner
since Lemma \ref{lem3.3} can be applied by switching variables
$\alpha'$ and $\alpha$ in the expression. We note that
Lemma \ref{lem3.3}
gives the same
$H^0 [a, a +2 \pi]$ estimates for
derivatives of $q_1$ and $q_2$ with respect to $\alpha$ or
$\alpha'$, independent of $a$.
\end{proof}

\begin{definition}
We write the cotangent as a function which is analytic at the
 origin plus a singular part:
 \begin{displaymath}
 \cot(\beta)=\frac{1}{\beta}+l(\beta).
 \end{displaymath}
\end{definition}

\begin{lemma}{(See \cite{AD1} or appendix for proof)}
\label{lem3.7}
Let $s\geq2$ and  $\omega\in H^s\big(\mathbb{T}[0,2\pi]\big)$ with
corresponding $\| {\tilde \theta} \|_1$ sufficiently small to ensure
$| q_1 [\omega] (\alpha, \alpha') | \ge \frac{1}{8}$. Then
$\mathcal{K}[\omega]:H^0\big(\mathbb{T}[0,2\pi]\big) \rightarrow
H^{s-2}\big(\mathbb{T}[0,2\pi]\big)$, and in particular, there are
positive constants $C_1$ and $C_2$ depending on $s$
such that
\begin{eqnarray}
\label{3.2}
\|\mathcal{K}[\omega]f\|_{s-2} \leq C_1 \|f\|_0
\exp{(C_2\|\omega_\alpha\|_{s-1})}.
\end{eqnarray}
Further, $\mathcal{K}[\omega]: H^1\big(\mathbb{T}[0,2\pi]\big)
\rightarrow H^{s-1}\big(\mathbb{T}[0,2\pi]\big)$, and
\begin{eqnarray}
\label{3.3}
\|\mathcal{K}[\omega]f\|_{s-1} \leq C_1\|f\|_1
\exp{(C_2\|\omega_\alpha\|_{s-1})}.
\end{eqnarray}
\end{lemma}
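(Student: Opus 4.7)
The plan is to first recast the kernel of $\mathcal{K}[\omega]$ in a manifestly non-singular form using divided differences, then prove estimate (\ref{3.2}) by differentiating under the integral combined with Cauchy-Schwarz and Lemma \ref{lem3.5}, and finally prove estimate (\ref{3.3}) by an integration-by-parts trick that trades one kernel-derivative for one derivative on $f$.

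\textbf{Desingularization of the kernel.} From the identity $\omega(\alpha)-\omega(\alpha')=(\alpha-\alpha')\,q_1[\omega](\alpha,\alpha')$, the consequence $\omega_\alpha(\alpha')-q_1[\omega](\alpha,\alpha')=(\alpha-\alpha')\,q_2[\omega](\alpha',\alpha)$ of Definition \ref{def3.2}, and the splitting $\cot\tfrac{\alpha-\alpha'}{2}=\tfrac{2}{\alpha-\alpha'}+l\bigl(\tfrac{\alpha-\alpha'}{2}\bigr)$, the apparent singularities in the integrand of $\mathcal{K}[\omega]$ cancel and I obtain the smooth kernel
$$K(\alpha,\alpha'):=\frac{1}{\omega(\alpha)-\omega(\alpha')}-\frac{\cot\tfrac{1}{2}(\alpha-\alpha')}{2\omega_\alpha(\alpha')}=\frac{q_2[\omega](\alpha',\alpha)}{\omega_\alpha(\alpha')\,q_1[\omega](\alpha,\alpha')}-\frac{l((\alpha-\alpha')/2)}{2\omega_\alpha(\alpha')}.$$
Since $\omega$ is $2\pi$-periodic by (B.4), $K$ is $2\pi$-periodic in $\alpha'$, the principal-value notation may be dropped, and differentiation under the integral (with limits $[\alpha-\pi,\alpha+\pi]$ or any interval of length $2\pi$) is legitimate.

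\textbf{Estimate (\ref{3.2}).} For integer $0\le k\le s-2$, Cauchy--Schwarz in $\alpha'$ yields
$$|D_\alpha^k(\mathcal{K}[\omega]f)(\alpha)|^2\le\frac{\|f\|_0^2}{(2\pi)^2}\int_{\alpha-\pi}^{\alpha+\pi}|D_\alpha^k K(\alpha,\alpha')|^2\,d\alpha'.$$
For the $q_2/q_1$ part, $1/\omega_\alpha(\alpha')$ is uniformly bounded via Lemma \ref{lem3.1} (applied with $s=1$), and Lemma \ref{lem3.5} furnishes
$$\Bigl(\int\Bigl|D_\alpha^k\frac{q_2[\omega](\alpha',\alpha)}{q_1[\omega](\alpha,\alpha')}\Bigr|^2d\alpha'\Bigr)^{1/2}\le C_1\|\omega_\alpha\|_{k+1}\exp(C_2\|\omega_\alpha\|_k).$$
The $l$-part is harmless because $l$ is analytic at $0$, giving uniform bounds on all its derivatives. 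Integrating in $\alpha$, summing over $0\le k\le s-2$, handling the zero Fourier mode directly by $L^\infty$-bounds on $K$, and absorbing the polynomial prefactor $\|\omega_\alpha\|_{k+1}\le\|\omega_\alpha\|_{s-1}$ into the exponential via $x\le e^x$, I obtain (\ref{3.2}).

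\textbf{Estimate (\ref{3.3}) and the main obstacle.} For $k=s-1$ the direct argument of the previous step would require $\|\omega_\alpha\|_s$, one derivative too many. I therefore decompose
$$D_\alpha^{s-1}K=D_\alpha^{s-2}\bigl[(\partial_\alpha+\partial_{\alpha'})K\bigr]-\partial_{\alpha'}\bigl[D_\alpha^{s-2}K\bigr],$$
and integrate by parts in $\alpha'$ (admissible by $2\pi$-periodicity) in the second summand, transferring one derivative onto $f$:
$$-\int f(\alpha')\partial_{\alpha'}D_\alpha^{s-2}K\,d\alpha'=\int f_\alpha(\alpha')\,D_\alpha^{s-2}K\,d\alpha',$$
which by the previous step is bounded by $C_1\|f\|_1\exp(C_2\|\omega_\alpha\|_{s-1})$. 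The first summand demands a bound on $D_\alpha^{s-2}[(\partial_\alpha+\partial_{\alpha'})K]$; a direct calculation gives
$$(\partial_\alpha+\partial_{\alpha'})K=\frac{1}{\alpha-\alpha'}\Bigl[\frac{\omega_{\alpha\alpha}(\alpha')}{\omega_\alpha(\alpha')^2}-\frac{q_1[\omega_\alpha](\alpha,\alpha')}{q_1[\omega](\alpha,\alpha')^2}\Bigr]+\frac{\omega_{\alpha\alpha}(\alpha')}{2\omega_\alpha(\alpha')^2}\,l\Bigl(\frac{\alpha-\alpha'}{2}\Bigr),$$
and the bracketed expression vanishes at $\alpha=\alpha'$ since $q_1[\,\cdot\,](\alpha',\alpha')$ reduces to the pointwise value; dividing by $\alpha-\alpha'$ therefore produces another smooth divided-difference kernel expressible through $q_1[\omega]$, $q_1[\omega_\alpha]$, $q_2[\omega]$, to which the Lemma \ref{lem3.5} machinery reapplies with the total regularity still capped at $\|\omega_\alpha\|_{s-1}$. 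Combining both summands gives (\ref{3.3}). The decisive technical point, and the main obstacle, is this cancellation: one must verify that the apparently singular contributions to $(\partial_\alpha+\partial_{\alpha'})K$ fit together into a kernel whose $L^2$-derivatives stay controlled by $\|\omega_\alpha\|_{s-1}$ rather than $\|\omega_\alpha\|_s$; once that is in hand the remaining estimates are routine applications of Cauchy--Schwarz, the Banach-algebra property of $\|\cdot\|_s$, and Lemma \ref{lem3.1}.
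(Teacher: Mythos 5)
Your desingularization of the kernel and your argument for (\ref{3.2}) are correct and essentially identical to the paper's: differentiate under the integral, write the kernel through $q_2[\omega](\alpha',\alpha)\big/\big(\omega_\alpha(\alpha')\,q_1[\omega](\alpha,\alpha')\big)$ plus the analytic $l$-part, and conclude by Cauchy--Schwarz together with Lemmas \ref{lem3.1} and \ref{lem3.5}.

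The proof of (\ref{3.3}), however, has a genuine gap at exactly the point you single out as the decisive one. The second summand of your decomposition (integration by parts onto $f$) is fine, but the first summand is not controlled by $\|\omega_\alpha\|_{s-1}$. In your own formula for $(\partial_\alpha+\partial_{\alpha'})K$, removing the factor $1/(\alpha-\alpha')$ from the bracket produces, besides harmless terms coming from $q_1[\omega](\alpha,\alpha')-\omega_\alpha(\alpha')=-(\alpha-\alpha')\,q_2[\omega](\alpha',\alpha)$, the contribution
$$\frac{\omega_{\alpha\alpha}(\alpha')-q_1[\omega_\alpha](\alpha,\alpha')}{(\alpha-\alpha')\,q_1[\omega](\alpha,\alpha')^2}=\pm\,\frac{q_2[\omega_\alpha](\alpha',\alpha)}{q_1[\omega](\alpha,\alpha')^2},$$
i.e.\ a \emph{second} divided difference of $\omega_\alpha$; the kernel is not expressible through $q_1[\omega]$, $q_1[\omega_\alpha]$, $q_2[\omega]$ alone, as you assert. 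For the worst Leibniz term, Lemma \ref{lem3.3} (applied with $\omega_\alpha$ in place of $\omega$) bounds $\|D_\alpha^{s-2}q_2[\omega_\alpha](\alpha',\alpha)\|_0$ only by $C\|\omega_{\alpha\alpha\alpha}\|_{s-2}=C\|\omega_\alpha\|_{s}$, i.e.\ it needs $\omega\in H^{s+1}$, one derivative more than the hypothesis $\omega\in H^{s}$ supplies; and you cannot push that extra derivative onto $f$ without demanding $f\in H^2$. The paper sidesteps this by integrating by parts at the \emph{first} derivative rather than at top order: using $D_\alpha$ of the kernel equals (up to the factor $\omega_\alpha(\alpha)/\omega_\alpha(\alpha')$) a $D_{\alpha'}$ of it, one obtains
$$D_\alpha\mathcal{K}[\omega]f=\frac{1}{2\pi i}\int_{\alpha-\pi}^{\alpha+\pi}D_{\alpha'}\Big(\frac{f(\alpha')}{\omega_\alpha(\alpha')}\Big)\Big[\frac{\omega_\alpha(\alpha)}{\omega(\alpha)-\omega(\alpha')}-\frac{1}{2}\cot\tfrac12(\alpha-\alpha')\Big]d\alpha',$$
after which the remaining $s-2$ derivatives fall on $-q_2[\omega](\alpha,\alpha')/q_1[\omega](\alpha,\alpha')-\tfrac12\,l$, which Lemma \ref{lem3.5} controls using only $\|\omega_\alpha\|_{s-1}$, while $D_{\alpha'}(f/\omega_\alpha)$ costs only $\|f\|_1\|\omega_\alpha\|_1$. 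To repair your argument you must either reproduce this identity (so that no third-order divided difference of $\omega$ ever appears) or supply a new estimate for $D_\alpha^{s-2}q_2[\omega_\alpha]$ in terms of $\|\omega_\alpha\|_{s-1}$, which is not available.
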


\begin{lemma}
\label{lem3.8} If $f \in H^1\left( \mathbb{T} [0, 2\pi] \right) $,
$\omega^{(1)}$ and $\omega^{(2)}$ correspond to
$\tilde{\theta}^{(1)}$ and $\tilde{\theta}^{(2)}$, each in $
\dot{H}^{1}$, respectively with $\|\tilde\theta^{(1)}\|_1$,
$\|\tilde\theta^{(2)}\|_1<\epsilon_1$, then for sufficient small
$\epsilon_1$,
\begin{eqnarray}
\|\mathcal{K}[\omega^{(1)}]f-\mathcal{K}[\omega^{(2)}]f\|_0 &\leq&
C_1
\|f\|_0\exp\left(C_2(\|\tilde\theta^{(1)}\|_1+\|\tilde\theta^{(2)}\|_1\right)
\|\tilde\theta^{(1)}-\tilde\theta^{(2)}\|_1.\nonumber
\end{eqnarray}
Suppose  ${\tilde \theta}^1, {\tilde \theta}^2 \in \dot{H}^s$. Then
for $s \ge 1$,
\begin{eqnarray*}
&&\|\mathcal{K}[\omega^{(1)}]f-\mathcal{K}[\omega^{(2)}]f\|_s \\
&\leq& C_1\exp\Big({C_2}\big(\|\tilde\theta^{(1)}\|_{s}+
\|\tilde\theta^{(2)}\|_{s}\big)\Big)
\|\tilde\theta^{(1)}-\tilde\theta^{(2)}\|_s\|f\|_1,
\end{eqnarray*}
while for
for $s\ge 3$,
\begin{eqnarray*}
&&\|\mathcal{K}[\omega^{(1)}]f-\mathcal{K}[\omega^{(2)}]f\|_s \\
&\leq& C_1\exp\Big({C_2}\big(\|\tilde\theta^{(1)}\|_{s-1}+
\|\tilde\theta^{(2)}\|_{s-1}\big)\Big)\Big(\big(\|\tilde\theta^{(1)}\|_s+\|\tilde\theta^{(2)}\|_s\big)\|\tilde\theta^{(1)}-\tilde\theta^{(2)}\|_{s-1}\\
&&\hspace{6cm}+
\|\tilde\theta^{(1)}-\tilde\theta^{(2)}\|_{s}\Big)\|f\|_1,
\end{eqnarray*}
where constants $C_1$ and $C_2$ depend on $s$ only.
\end{lemma}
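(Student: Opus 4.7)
I would estimate the kernel of $\mathcal{K}[\omega^{(1)}] - \mathcal{K}[\omega^{(2)}]$ after rewriting each operator in a form where the $1/(\alpha-\alpha')$-type singularity is manifestly cancelled, and then bound each resulting summand by Proposition \ref{prop3.4} and Lemmas \ref{lem3.1}, \ref{lem3.3}, \ref{lem3.5}.

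Set $w = \omega^{(1)}-\omega^{(2)}$ and split $\cot\tfrac{\alpha-\alpha'}{2} = \tfrac{2}{\alpha-\alpha'} + l\bigl(\tfrac{\alpha-\alpha'}{2}\bigr)$. Using $\omega^{(j)}(\alpha)-\omega^{(j)}(\alpha') = (\alpha-\alpha')\,q_1[\omega^{(j)}]$ together with the elementary identity $\omega^{(j)}_\alpha(\alpha') - q_1[\omega^{(j)}](\alpha,\alpha') = (\alpha-\alpha')\,q_2[\omega^{(j)}](\alpha',\alpha)$, each summand of $\mathcal{K}[\omega^{(j)}]$ rewrites as
\[
\frac{1}{\omega^{(j)}(\alpha)-\omega^{(j)}(\alpha')} - \frac{\cot\tfrac{1}{2}(\alpha-\alpha')}{2\,\omega^{(j)}_\alpha(\alpha')} = \frac{q_2[\omega^{(j)}](\alpha',\alpha)}{q_1[\omega^{(j)}]\,\omega^{(j)}_\alpha(\alpha')} - \frac{l\bigl(\tfrac{1}{2}(\alpha-\alpha')\bigr)}{2\,\omega^{(j)}_\alpha(\alpha')}.
\]
Subtracting the $j=1,2$ copies and telescoping produces a kernel consisting of finitely many summands, each numerator containing exactly one factor of $q_2[w](\alpha',\alpha)$, $q_1[w]$ or $w_\alpha(\alpha')$ (equivalently, of $\tilde\theta^{(1)}-\tilde\theta^{(2)}$) and each denominator being a product of $q_1[\omega^{(j)}]$'s and $\omega^{(j)}_\alpha(\alpha')$'s, all uniformly bounded below thanks to Proposition \ref{prop3.4} and Lemma \ref{lem3.1}.

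For the $H^0$ bound I would use Minkowski's inequality in $\alpha$, control the $L^2(\alpha')$-norm of each kernel summand uniformly in $\alpha$ by Lemma \ref{lem3.5} (together with Lemma \ref{lem3.3} for the analytic $l$ piece), and then convert the resulting $\|w_\alpha\|_0$ and $\|w_{\alpha\alpha}\|_0$ into $\|\tilde\theta^{(1)}-\tilde\theta^{(2)}\|_1$ through Lemma \ref{lem3.1}. For the $\|\cdot\|_s$ version with $s\ge 1$, I would differentiate $k\le s$ times under the integral sign with respect to $\alpha$, apply the Leibniz rule to distribute derivatives among the three factors (the two divided differences and the reciprocal of $\omega^{(j)}_\alpha(\alpha')$), and invoke Lemmas \ref{lem3.3} and \ref{lem3.5} for each resulting mixed term. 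The need to pull $f(\alpha')$ out pointwise at one stage forces the appearance of $\|f\|_1$ (via Sobolev embedding $|f|_\infty \le C\|f\|_1$) rather than $\|f\|_0$ on the right.

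The delicate part is the refined estimate for $s\ge 3$, in which at most one factor is allowed to carry the top derivative count $s$. When the $s$-th derivative lands on a $w$-factor, I would extract $\|\tilde\theta^{(1)}-\tilde\theta^{(2)}\|_s$ and estimate the remaining $\omega^{(j)}$-factors in the lower norm $\|\cdot\|_{s-1}$ via the first half of Lemma \ref{lem3.1}; when it lands instead on an $\omega^{(j)}$-factor, I would use the sharper estimates (\ref{5thstate})--(\ref{6thstate}) of Lemma \ref{lem3.1}, whose right-hand side is already of the desired product shape $\bigl(\|\tilde\theta^{(1)}\|_s+\|\tilde\theta^{(2)}\|_s\bigr)\,\|\tilde\theta^{(1)}-\tilde\theta^{(2)}\|_{s-1}$. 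All lower-order Leibniz terms reduce to the $s-1$ case already proven, closing the induction. The main obstacle is the combinatorial bookkeeping of ensuring that each summand of the expanded kernel indeed has the advertised $(w,\omega^{(j)})$ norm structure and that no single factor is ever asked to carry more than $s$ derivatives; once this accounting is done, the estimates are routine consequences of the preceding lemmas.
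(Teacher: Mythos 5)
Your kernel decomposition and telescoping (one factor of $w=\omega^{(1)}-\omega^{(2)}$ per summand, denominators controlled by Proposition \ref{prop3.4} and Lemma \ref{lem3.1}) is exactly the paper's starting point, and your treatment of the first ($H^0$) estimate is essentially the paper's: bound each kernel summand in $L^2$ in one variable uniformly in the other via Lemmas \ref{lem3.3} and \ref{lem3.5}, then convert to $\|\tilde\theta^{(1)}-\tilde\theta^{(2)}\|_1$ by Lemma \ref{lem3.1}. That part is fine.

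The higher-order estimates, however, contain a genuine gap. You propose to apply $D_\alpha^s$ directly under the integral sign and distribute derivatives by the Leibniz rule, with $\|f\|_1$ entering only through $|f|_\infty\le C\|f\|_1$. But in that expansion the term in which all $s$ derivatives of $\alpha$ fall on the divided-difference factor $q_2$ requires, by Lemmas \ref{lem3.3} and \ref{lem3.5}, control of $\|\omega_{\alpha\alpha}\|_s\sim\|\tilde\theta\|_{s+1}$ (respectively $\|w_{\alpha\alpha}\|_s\sim\|\tilde\theta^{(1)}-\tilde\theta^{(2)}\|_{s+1}$ when they land on the $w$-factor), i.e.\ one derivative more than the hypothesis $\tilde\theta^{(j)}\in\dot{H}^s$ and the claimed right-hand sides allow. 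No bookkeeping of ``at most $s$ derivatives per factor'' can repair this, because $q_2$ already carries two derivatives of $\omega$. The paper closes the estimate by first integrating by parts once in $\alpha'$ (formula (\ref{3.4})), which transfers one derivative onto $f(\alpha')/\omega_\alpha(\alpha')$, so that only $D_\alpha^{s-1}$ ever hits the kernel $q_2/q_1$; this derivative transfer is also the true source of the factor $\|f\|_1$ on the right (one needs $D_{\alpha'}\big(f/\omega_\alpha\big)\in H^0$), not Sobolev embedding of $f$. Your idea for the $s\ge3$ refinement, namely invoking the sharper difference estimates (\ref{5thstate})--(\ref{6thstate}) of Lemma \ref{lem3.1}, agrees with what the paper does, but it only becomes available once the integration-by-parts step is inserted.
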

\begin{proof}
We note that
\begin{multline*}
\mathcal{K} [\omega^{(1)}] f - \mathcal{K} [\omega^{(2)} ] f =
-\frac{1}{2 \pi i} \int_{\alpha-\pi}^{\alpha+ \pi}
\frac{f(\alpha')}{\omega^{(1)}_\alpha(\alpha')}  \left ( \frac{q_2
[\omega^{(1)}] (\alpha', \alpha) }{q_1 [\omega^{(1)} ] (\alpha',
\alpha) } -\frac{q_2 [\omega^{(2)}] (\alpha', \alpha) }{q_1
[\omega^{(2)} ] (\alpha', \alpha) }  \right )
d\alpha' \\
-\frac{1}{2 \pi i} \int_{\alpha-\pi}^{\alpha+ \pi}
f(\alpha')\left(\frac{1}{\omega^{(1)}_\alpha(\alpha')}-\frac{1}{\omega^{(2)}_\alpha(\alpha')}\right)
 \frac{q_2 [\omega^{(2)}] (\alpha', \alpha) }{q_1 [\omega^{(2)} ]
(\alpha', \alpha) }
d\alpha' \\
-\frac{1}{2\pi i}\int_{\alpha-\pi}^{\alpha+\pi} f(\alpha')\left(
\frac{1}{2\omega^{(1)}_{\alpha}(\alpha')}-
\frac{1}{2\omega^{(2)}_{\alpha}(\alpha')}\right)
l\big(\frac{1}{2}(\alpha-\alpha')\big)d\alpha' . \end{multline*} We
also have \begin{multline*}
  \frac{q_2
[\omega^{(1)}] (\alpha', \alpha) }{q_1 [\omega^{(1)} ] (\alpha',
\alpha) } -\frac{q_2 [\omega^{(2)}] (\alpha', \alpha) }{q_1
[\omega^{(2)} ] (\alpha', \alpha) } =\frac{q_2
[\omega^{(1)}-\omega^{(2)} ] (\alpha',\alpha) }{ q_1 [\omega^{(1)}
](\alpha', \alpha) } \\-\frac{q_2 [\omega^{(2)} (\alpha', \alpha)
q_1 [\omega^{(1)}-\omega^{(2)} ] (\alpha',\alpha) }{ q_1
[\omega^{(2)} ](\alpha', \alpha) q_1 [\omega^{(1)} ](\alpha',
\alpha) }.
\end{multline*}
Therefore, using Sobolev inequality $|. |_\infty \le C \| . \|_1$, we obtain
\begin{multline*}
\| \mathcal{K} [\omega^{(1)}] f - \mathcal{K} [\omega^{(2)} ] f \|_0
\le C_1 \| f \|_0\left\|\frac{1}{\omega^{(1)}_\alpha}\right\|_1
\left ( \| q_2 [\omega^{(1)} -\omega^{(2)} ]  \|_0 \right . \\
\left . +\| q_1 [\omega^{(1)} - \omega^{(2)}] \|_1 \| q_2
[\omega^{(2)}  \|_0 \right )+C_2\| f
\|_0\left\|\frac{1}{\omega^{(1)}_\alpha}-\frac{1}{\omega^{(2)}_\alpha}\right\|_1\left(\|q_2
[\omega^{(2)}] \|_0+1\right).
\end{multline*}
The first statement follows easily from Lemmas \ref{lem3.1},
\ref{lem3.3} and \ref{lem3.5}. Further, using one integration by
parts, the $s$th derivative of $\mathcal{K}[\omega] f$ is
\begin{eqnarray}
D^{s}_\alpha \mathcal{K}[\omega] f (\alpha) &=&D^{s-1}_\alpha
\frac{1}{2\pi i}\int_{\alpha-\pi}^{\alpha+\pi}
D_{\alpha'}\Big(\frac{ f(\alpha')}{\omega_\alpha(\alpha')}\Big)
\Big{[}\frac{\omega_\alpha(\alpha)}{\omega(\alpha)-\omega(\alpha')}-
\frac{1}{2}\cot{\frac{1}{2}(\alpha-\alpha')}\Big{]}d\alpha'\nonumber\\
&=&\frac{1}{2\pi i}\int_{\alpha-\pi}^{\alpha+\pi}
D_{\alpha'}\Big(\frac{ f(\alpha')}{\omega_\alpha(\alpha')}
\Big)D^{s-1}_\alpha\Big{[}\frac{\omega_\alpha(\alpha)}{\omega(\alpha)-\omega(\alpha')}
-\frac{1}{2}\cot{\frac{1}{2}(\alpha-\alpha')}\Big{]}d\alpha'\nonumber\\
&=& \frac{1}{2\pi
i}\int_{\alpha-\pi}^{\alpha+\pi}D_{\alpha'}\Big(\frac{
f(\alpha')}{\omega_\alpha(\alpha')}\Big)
D^{s-1}_\alpha\Big{[}\frac{\omega_\alpha(\alpha)}{\omega(\alpha)-\omega(\alpha')}-\frac{1}{\alpha-\alpha'}\Big{]}d\alpha'\nonumber\\
 &&-\frac{1}{2\pi i}\int_{\alpha-\pi}^{\alpha+\pi}
D_{\alpha'}\Big(
\frac{f(\alpha')}{2\omega_{\alpha}(\alpha')}\Big)D^{s-1}_\alpha
l\big(\frac{1}{2}(\alpha-\alpha')\big)d\alpha'.
\label{3.4}
\end{eqnarray}
Hence, we have
\begin{multline}
\label{3.5}
D^{s}_\alpha \big(\mathcal{K}[\omega^{(1)}] f
(\alpha)-\mathcal{K}[\omega^{(2)}] f
(\alpha)\big)\\
 =-\frac{1}{2\pi i}\int_{\alpha-\pi}^{\alpha+\pi}\Big(D_{\alpha'}\Big(\frac{ f(\alpha')}{\omega^{(1)}_\alpha(\alpha')}\Big)
D^{s-1}_\alpha\frac{q_2[\omega^{(1)}](\alpha,\alpha')}{q_1[\omega^{(1)}](\alpha,\alpha')}
-D_{\alpha'}\Big(\frac{
f(\alpha')}{\omega^{(2)}_\alpha(\alpha')}\Big)
D^{s-1}_\alpha\frac{q_2[\omega^{(2)}](\alpha,\alpha')}{q_1[\omega^{(2)}](\alpha,\alpha')}\Big)d\alpha'\\
 -\frac{1}{2\pi i}\int_{\alpha-\pi}^{\alpha+\pi}
\Big(D_{\alpha'}\Big(
\frac{f(\alpha')}{2\omega^{(1)}_{\alpha}(\alpha')}\Big)-D_{\alpha'}\Big(
\frac{f(\alpha')}{2\omega^{(2)}_{\alpha}(\alpha')}\Big)\Big)D^{s-1}_\alpha
l\big(\frac{1}{2}(\alpha-\alpha')\big)d\alpha'.
\end{multline}
Let us see the first part on the right side of (\ref{3.5}).
It can be
split as
\begin{multline}
\label{3.6}
\frac{1}{2\pi i}\int_{\alpha-\pi}^{\alpha+\pi}
D_{\alpha'}\Big(f(\alpha')
\frac{\omega^{(2)}_\alpha(\alpha')-\omega^{(1)}_\alpha(\alpha')}{\omega^{(1)}_\alpha(\alpha')\omega^{(2)}_\alpha(\alpha')}\Big)
D^{s-1}_\alpha \frac{q_2[\omega^{(1)}](\alpha,\alpha')}{
q_1[\omega^{(1)}](\alpha,\alpha')}d\alpha'\\
+\frac{1}{2\pi i}\int_{\alpha-\pi}^{\alpha+\pi}
D_{\alpha'}\Big(\frac{f(\alpha')}{\omega^{(2)}_\alpha(\alpha')}\Big)
D^{s-1}_\alpha\Big{[}
\frac{q_2[\omega^{(1)}-\omega^{(2)}] (\alpha,\alpha')}{
q_1[\omega^{(1)}](\alpha,\alpha')} \Big] d\alpha' \\
+\frac{1}{2\pi i}\int_{\alpha-\pi}^{\alpha+\pi}
D_{\alpha'}\Big(\frac{f(\alpha')}{\omega^{(2)}_\alpha(\alpha')}\Big)
D^{s-1}_\alpha\Big{[} \frac{q_2 [\omega^{(2)}] (\alpha, \alpha') }{
q_1 [\omega^{(2)}] (\alpha, \alpha') }
\frac{q_1[\omega^{(2)}-\omega^{(1)}] (\alpha,\alpha')}{
q_1[\omega^{(1)}](\alpha,\alpha')} \Big] d\alpha'.
\end{multline}
By Proposition \ref{prop2.6},  Lemma \ref{lem3.1},  Lemma \ref{lem3.5}
and Note \ref{note1.1},
the
$L^{\infty}$-norm of the first part of (\ref{3.6}) is bounded by
\begin{multline*}
C_1\Big\|\frac{f}{\omega^{(1)}_\alpha\omega^{(2)}_\alpha}(\omega^{(1)}_\alpha-\omega^{(2)}_\alpha)\Big\|_1\|\omega^{(1)}_\alpha\|_{s}
\exp\big({C_2}\|\omega^{(1)}_\alpha\|_{s-1}\big)\\
\leq
C_1\left(\|\tilde\theta^{(1)}\|_s+\|\tilde\theta^{(2)}\|_s+1\right)\exp\Big({C_2}\big(\|\tilde\theta^{(1)}\|_{s-1}+
\|\tilde\theta^{(2)}\|_{s-1}\big)\Big)\|\tilde\theta^{(1)}-\tilde\theta^{(2)}\|_1\|f\|_1,
\end{multline*}
with $C_1$  and $C_2$ depending on $s$.
For the second term in (\ref{3.6}), we use the Cauchy Schwartz inequality,
Lemmas \ref{lem3.1}, \ref{lem3.5} to obtain the bound as quoted in the
Lemma.
For the third term, we apply similar argument. We note that
for $ 0 \le l < s-1$,
\begin{multline*}
\left\| D_\alpha^{l} \left [ \frac{q_2 [\omega^{(2)}] (\alpha,
\alpha') }{ q_1 [\omega^{(2)}] (\alpha, \alpha') }  \right ]
D_\alpha^{s-1-l} \left [ \frac{q_1[\omega^{(2)}-\omega^{(1)}]
(\alpha,\alpha')}{
q_1[\omega^{(1)}](\alpha,\alpha')} \right ] \right\|_0  \\
\le \left|  D_\alpha^{l} \left [ \frac{q_2 [\omega^{(2)}] (\alpha,
\alpha') }{ q_1 [\omega^{(2)}] (\alpha, \alpha') }  \right ] \right
|_{\infty}\left\| D_\alpha^{s-1-l} \left [
\frac{q_1[\omega^{(2)}-\omega^{(1)}] (\alpha,\alpha')}{
q_1[\omega^{(1)}](\alpha,\alpha')} \right ] \right\|_0.
\end{multline*}
It is readily checked that
$$ D_{\alpha'}
\left [ \frac{q_2 [\omega^{(2)}] (\alpha, \alpha') }{ q_1
[\omega^{(2)}] (\alpha, \alpha') }  \right ] = - D_\alpha \left [
\frac{q_2 [\omega^{(2)}] (\alpha', \alpha) }{ q_1 [\omega^{(2)}]
(\alpha', \alpha) }  \right ]. $$ Since $ |. |_{\infty} \le C \| ,
\|_1$, it follows from Lemma \ref{lem3.5} that for $l < s-1$.
$$
\left| D_\alpha^{l} \left [ \frac{q_2 [\omega^{(2)}] (\alpha,
\alpha') }{ q_1 [\omega^{(2)}] (\alpha, \alpha') }  \right ]
\right|_{\infty} \le C_1\| \omega^{(2)}_\alpha \|_{s} \exp \left
(C_2 \| \omega^{(2)}_\alpha \|_{s-1} \right )
$$
with $C_1$  and $C_2$ depending on $s$. When $l=s-1$,
\begin{multline*}
\left \| D_\alpha^{s-1} \left [ \frac{q_2 [\omega^{(2)}] (\alpha,
\alpha') }{ q_1 [\omega^{(2)}] (\alpha, \alpha') }  \right ]
\frac{q_1[\omega^{(2)}-\omega^{(1)}] (\alpha,\alpha')}{
q_1[\omega^{(1)}](\alpha,\alpha')} \right \|_0  \\
\le \left| \left [ \frac{q_1[\omega^{(2)}-\omega^{(1)}]
(\alpha,\alpha')}{ q_1[\omega^{(1)}](\alpha,\alpha')} \right ]
\right|_{\infty} \left \| D_\alpha^{s-1} \left [ \frac{q_2
[\omega^{(2)}] (\alpha, \alpha') }{ q_1 [\omega^{(2)}] (\alpha,
\alpha') }  \right ]  \right\|_0.
\end{multline*}
Once again using Sobolev inequality $ |. |_\infty \le C \| .\|_1$ and
using Lemmas \ref{lem3.1}, \ref{lem3.5} we obtain
the stated bounds.

Since the function $l$ is symmetric about $\alpha$ and $\alpha'$, it
is easy to see that the stated bounds also hold for the second part
on the right side of (\ref{3.5}).

For $s\ge3$,
we use the more refined estimates in Lemma \ref{lem3.1} to obtain the third statement.

\end{proof}
\begin{lemma}{(See \cite{AD1} or appendix for proof)}
\label{lem3.9}
For  $\psi\in H^s\big(\mathbb{T}[0,2\pi]\big)$ with $s\geq 1$, the
operator $[\mathcal{H},\psi]$ is bounded from
$H^0\big(\mathbb{T}[0,2\pi]\big) $ to
$H^{s-1}\big(\mathbb{T}[0,2\pi]\big)$. And we have
\begin{eqnarray*}
\|[\mathcal{H},\psi]f\|_{s-1} \leq C \|f\|_0 \|\psi\|_s,
\end{eqnarray*}
 where $C$ depends on $s$.
\end{lemma}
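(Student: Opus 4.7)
The plan is to work entirely in Fourier space, where Definition \ref{def1.2} gives $\widehat{\mathcal{H}f}(k)=-i\,\sgn(k)\hat{f}(k)$. Expanding $\psi$ and $f$ in Fourier series and using the convolution formula for the Fourier coefficients of a product, a direct computation yields
$$
\big([\mathcal{H},\psi]f\big)_k \;=\; -i\sum_{j\in\mathbb{Z}}\big[\sgn(k)-\sgn(j)\big]\hat{\psi}(k-j)\hat{f}(j).
$$
The entire estimate will rest on one elementary observation: the factor $\sgn(k)-\sgn(j)$ vanishes unless $k$ and $j$ have strictly opposite signs or exactly one of them is zero, and on this support one always has $|k-j|\ge |k|$.

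Using that observation, for every $k\neq 0$ I would bound
$$
|k|^{s-1}\big|([\mathcal{H},\psi]f)_k\big| \;\le\; 2\sum_{j}|k-j|^{s-1}|\hat{\psi}(k-j)|\,|\hat{f}(j)| \;=\; 2\,(h_\psi * |\hat{f}|)(k),
$$
where $h_\psi(m):=|m|^{s-1}|\hat{\psi}(m)|$ for $m\neq 0$ and $h_\psi(0):=0$. Thus the derivative weight $|k|^{s-1}$ that defines the $H^{s-1}$ norm of the output has been absorbed into an extra derivative on $\psi$.

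To finish, I would invoke Young's convolution inequality, $\|h_\psi*|\hat{f}|\|_{\ell^2}\le \|h_\psi\|_{\ell^1}\|\hat{f}\|_{\ell^2}$, together with the Cauchy--Schwarz bound
$$
\|h_\psi\|_{\ell^1}=\sum_{m\neq 0}\frac{1}{|m|}\cdot|m|^{s}|\hat\psi(m)|\;\le\;\Big(\sum_{m\neq 0}\frac{1}{m^{2}}\Big)^{1/2}\Big(\sum_{m}|m|^{2s}|\hat{\psi}(m)|^{2}\Big)^{1/2}\;\le\;C\,\|\psi\|_s,
$$
which is precisely where the hypothesis $s\ge 1$ is used to make the series $\sum 1/m^2$ converge. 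Combining this with Parseval's identity $\|\hat{f}\|_{\ell^2}=\|f\|_0$, and handling the $k=0$ mode separately (it is controlled by $\|\psi\|_0\|f\|_0 \le \|\psi\|_s\|f\|_0$ via a direct Cauchy--Schwarz), yields the desired estimate $\|[\mathcal{H},\psi]f\|_{s-1}\le C\|f\|_0\|\psi\|_s$ with $C$ depending only on $s$.

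There is no serious obstacle: the crux is the sign-support observation $|k-j|\ge|k|$, which is the Fourier-side manifestation of the cancellation built into $\mathcal{H}(\psi f)-\psi\mathcal{H}(f)$. Everything else is a routine application of Young's inequality and Cauchy--Schwarz. (An equivalent proof through the kernel representation of $\mathcal{H}$ is also possible, replacing $\psi(\alpha)-\psi(\alpha')$ by $(\alpha-\alpha')q_1[\psi](\alpha,\alpha')$ and splitting $\cot\tfrac{1}{2}(\alpha-\alpha')=2/(\alpha-\alpha')+l\big(\tfrac{1}{2}(\alpha-\alpha')\big)$ as in Lemma \ref{lem3.3}, but the Fourier argument above is the most economical on the torus.)
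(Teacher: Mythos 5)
Your argument is correct, but it is not the route the paper itself takes for Lemma \ref{lem3.9}. The paper's designated proof (in \cite{AD1}, reproduced in the appendix) stays in physical space: it writes $[\mathcal{H},\psi]f(\alpha)=\frac{1}{2\pi}\int_{\alpha-\pi}^{\alpha+\pi} f(\alpha')\big(\psi(\alpha')-\psi(\alpha)\big)\cot\big(\tfrac12(\alpha-\alpha')\big)\,d\alpha'$, factors the kernel as the divided difference $q_1[\psi](\alpha,\alpha')$ times the smooth function $(\alpha-\alpha')\cot\tfrac12(\alpha-\alpha')$, and then, after letting the $s-1$ derivatives fall on the kernel and invoking the $q_1$ bounds of Lemma \ref{lem3.3}, concludes by the Generalized Young (Schur-type) inequality. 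Your proof instead works on the Fourier side, exploiting the cancellation $\sgn(k)-\sgn(j)=0$ unless $k$ and $j$ lie on opposite sides of the origin, which forces $|k-j|\ge|k|$ and lets the output weight $|k|^{s-1}$ be transferred onto $\hat\psi$; Young's convolution inequality $\ell^1*\ell^2\subset\ell^2$ with the $\ell^1$ norm placed on $|m|^{s-1}\hat\psi(m)$ then finishes. This is in fact the same mechanism the paper uses in its in-text proof of the companion Lemma \ref{lem3.10}, with the roles of $f$ and $\psi$ in the convolution inequality interchanged (there the $\ell^1$ norm sits on $\hat f$, giving $\|f\|_1\|\psi\|_s$; here it sits on the weighted $\hat\psi$, giving $\|f\|_0\|\psi\|_s$). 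What your route buys: it works for arbitrary real $s\ge1$ with no integer-derivative count, and it needs no divided-difference machinery; what the kernel route buys is that it is the template that generalizes to the genuinely curve-dependent operators $\mathcal{K}[\omega]$ treated in Lemmas \ref{lem3.7} and \ref{lem3.8}, where no clean Fourier multiplier structure is available.

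One small correction of attribution rather than substance: the hypothesis $s\ge1$ is not needed to make $\sum_{m\ne0} m^{-2}$ converge (that series converges regardless); it is needed earlier, at the absorption step $|k|^{s-1}\le|k-j|^{s-1}$, which uses monotonicity of $x\mapsto x^{s-1}$ and hence $s-1\ge0$. With that understood, every step of your chain, including the separate treatment of the $k=0$ mode by Cauchy--Schwarz, is valid.
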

\begin{lemma}
\label{lem3.10}
 For $s>\frac{1}{2}$
and  $\psi \in H^s\big(\mathbb{T}[0,2\pi]\big)$, the operator
 $[\mathcal{H},\psi]$ is bounded from $H^1\big(\mathbb{T}[0,2\pi]\big)$ to $H^s\big(\mathbb{T}[0,2\pi]\big)$, and
 \begin{eqnarray*}
 \|[\mathcal{H},\psi]f\|_s\leq C \|f\|_1\|\psi\|_s,
 \end{eqnarray*}
 where $C$ depends on $s$.
 \end{lemma}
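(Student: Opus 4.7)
The plan is to prove the estimate by a direct Fourier analysis, which is better adapted here than trying to reduce to Lemma \ref{lem3.9} (which would lose a derivative). Expanding $\psi=\sum_k\hat\psi(k)e^{ik\alpha}$ and $f=\sum_j\hat f(j)e^{ij\alpha}$, one checks
$$\bigl([\mathcal{H},\psi]f\bigr)^{\wedge}(k)=-i\sum_{j}\hat\psi(k-j)\hat f(j)\bigl[\sgn(k)-\sgn(j)\bigr].$$
The crucial observation is that the factor $\sgn(k)-\sgn(j)$ vanishes unless $k$ and $j$ have strictly opposite signs (or one of them is zero), and in that case $|k-j|\geq|k|$. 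This lets me transfer the weight $|k|^s$ from the output frequency onto the $\psi$-factor of the convolution, where all the Sobolev smoothness lives.

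For $k\neq 0$, I would therefore bound $|k|^s\bigl|([\mathcal{H},\psi]f)^{\wedge}(k)\bigr|$ by
$$2\sum_{j}|k-j|^s|\hat\psi(k-j)|\,|\hat f(j)|=\widehat{(gh)}(k),$$
where $g(\alpha):=\sum_k|k|^s|\hat\psi(k)|\,e^{ik\alpha}$ and $h(\alpha):=\sum_j|\hat f(j)|\,e^{ij\alpha}$. Parseval together with the pointwise inequality $\|gh\|_0\leq|h|_{\infty}\|g\|_0$ then reduces matters to the easy identities $\|g\|_0=\|\psi\|_s$ and $|h|_{\infty}\leq\sum_j|\hat f(j)|$; the latter sum is controlled by $C\|f\|_1$ via Cauchy--Schwarz with weights $|j|^{-1},|j|$, using $\sum_{j\neq 0}j^{-2}<\infty$.

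The zero-th Fourier mode has to be handled separately, since the sign cancellation does not supply a $|k|^s$ factor there. Because $\mathcal{H}$ has zero mean, $\widehat{\mathcal{H}(\psi f)}(0)=0$, so only $\widehat{\psi\mathcal{H}f}(0)$ contributes; splitting it as $\sum_{j\neq 0}\bigl(|j|^s\hat\psi(-j)\bigr)\cdot\bigl(|j|^{-s}(-i\sgn j)\hat f(j)\bigr)$ and applying Cauchy--Schwarz gives a bound of the form $\|\psi\|_s\bigl(\sum_{j\neq 0}|j|^{-2s}\bigr)^{1/2}\|f\|_0$. This is the unique place in the argument where the hypothesis $s>\tfrac12$ is actually needed, ensuring $\sum|j|^{-2s}$ converges; it is also the only mild obstacle in what is otherwise a routine Fourier computation.
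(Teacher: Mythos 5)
Your proof is correct and follows essentially the same route as the paper's: both work on the Fourier side, use the cancellation of $\sgn(k)-\sgn(j)$ to transfer the weight $|k|^s$ onto $\hat\psi$, and close with the $\ell^1\ast\ell^2\to\ell^2$ convolution bound (equivalently $\|gh\|_0\le|h|_\infty\|g\|_0$) together with $\sum_j|\hat f(j)|\le C\|f\|_1$. The only, harmless, difference is the zero mode, which the paper bounds directly by $\|\psi\|_0\|f\|_0\le\|\psi\|_s\|f\|_0$ without invoking $s>\tfrac12$, whereas your weighted Cauchy--Schwarz uses that hypothesis there.
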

 \begin{proof}
 We know that
 \begin{eqnarray}
 \|[\mathcal{H},\psi]f\|_s^2&=&\sum_{k\neq0}
 |k|^{2s}
 \big|\widehat{\mathcal{H}(\psi f)}(k)-\widehat{\psi \mathcal{H}f}(k)\big|^2+
 \big|\widehat{\mathcal{H}(\psi f)}(0)-\widehat{\psi \mathcal{H}f}(0)\big|^2.\nonumber
 \end{eqnarray}
 Since
 \begin{displaymath}
 \widehat{\mathcal{H}(\psi f)}(k)=(-i)\sgn(k)\widehat{\psi
 f}(k)=(-i)\sgn(k)\sum_{j=-\infty}^{\infty}\hat{\psi}(j)\hat{f}({k-j}),\mbox{
 for }k\neq 0,
 \end{displaymath}
 and
 \begin{displaymath}
 \widehat{\psi \mathcal{H}f}(k)=\sum_{j=-\infty}^{\infty}\hat{\psi}(j)\widehat{\mathcal{H}f}({k-j})=(-i)\sum_{j\neq
 k}\hat{\psi}(j)
 \sgn(k-j)\hat{f}({k-j}),
 \end{displaymath}
by Cauchy's inequality and the inequality $\|gh\|_0\leq
| h |_{\infty} \|g\|_0 \le C \| h\|_1 \|g\|_0$, we have
\begin{eqnarray*}
 &&\big\|[\mathcal{H},\psi]f\big\|_s^2\\
 &=&\sum_{k\neq 0}
 |k|^{2s}\Big|-i\sgn(k)\sum_{j=-\infty}^{\infty}\hat{\psi}(j)\hat{f}({k-j})+i\sum_{j\neq
 k}\hat{\psi}(j)
 \sgn(k-j)\hat{f}({k-j})\Big|^2\\
 &&+\Big|-i\sum_{j\neq 0}\hat{\psi}(j)
 \sgn(-j)\hat{f}({-j})\Big|^2\\
 &=&\sum_{k>0}|k|^{2s}\Big|2\sum_{j>k}
 \hat{\psi}(j)\hat{f}({k-j})+\hat{\psi}(k)
 \hat{f}(0)\Big|^2\\
 &&+\sum_{k<0}|k|^{2s}\Big|2\sum_{j<k}
 \hat{\psi}(j)\hat{f}({k-j})+\hat{\psi}(k)
 \hat{f}(0)\Big|^2
 +\Big|\sum_{j\neq 0}\hat{\psi}(j)
 \sgn(-j)\hat{f}(-j)\Big|^2 \\
&\leq&\sum_{k>0}8|k|^{2s}\Big|\sum_{j>k}\hat{
\psi}(j)\hat{f}({k-j})\Big|^2
 +\sum_{k<0}8|k|^{2s}\Big|\sum_{j<k}
\hat{\psi}(j)\hat{f}({k-j})\Big|^2 \\
&&+2\|\psi\|_s^2 |\hat{f}(0)|^2
 +\Big|\sum_{j\neq 0}\hat{\psi}(j)
 \sgn(-j)\hat{f}({-j})\Big|^2\\
 &\leq&\sum_{k>0}8\Big|\sum_{j>k}|j|^{s}\big|\hat{\psi}(j)\hat{f}({k-j})\big|\Big|^2
 +\sum_{k<0}8\Big|\sum_{j<k}|j|^s\big|\hat{\psi}(j)\hat{f}({k-j})\big|\Big|^2\\
 &&+2\|\psi\|_s^2 |\hat{f}(0)|^2+\|\psi\|_0^2\|f\|_0^2\\
&\leq&8\sum_{k=-\infty}^{\infty}\Big|\sum_{j=-\infty}^\infty\big||j|^s\hat{\psi}(j)\big|\big|\hat{f}({k-j})\big|\Big|^2
+3\|\psi\|_s^2\|f\|_0^2.
\end{eqnarray*}
We define
$$\big\{|j|^s|\hat{\psi}(s)|\big\}_{j\in\mathbb{Z}}=\Psi\mbox{
 and   }\big\{\hat{f}(j)\big\}_{j\in\mathbb{Z}}=\mbox{\boldmath
 $f$}.$$
 By Proposition 3.1999 in \cite{IO}, we know that $\|\mbox{\boldmath
 $f$}\ast\Psi\|_2\leq \|\mbox{\boldmath $f$}\|_1\|\Psi\|_2$. Hence
 we obtain the result of the lemma.

 \end{proof}

 \begin{lemma}
\label{lem3.11} If $f \in H^1\left( \mathbb{T} [0, 2\pi] \right) $,
$\omega^{(1)}$ and $\omega^{(2)}$ correspond to
$\tilde{\theta}^{(1)}$ and $\tilde{\theta}^{(2)}$ respectively, each
in $ \dot{H}^{1} $, $\|\tilde\theta^{(1)}\|_1$ and
$\|\tilde\theta^{(2)}\|_1<\epsilon_1$, then
\begin{eqnarray}
\|\mathcal{G}[\omega^{(1)}]f-\mathcal{G}[\omega^{(2)}]f\|_0 &\leq&
C_1
\|f\|_0\exp\left(C_2(\|\tilde\theta^{(1)}\|_1+\|\tilde\theta^{(2)}\|_1\right)
\|\tilde\theta^{(1)}-\tilde\theta^{(2)}\|_1.\nonumber
\end{eqnarray}
If  ${\tilde \theta}^1, {\tilde \theta}^2 \in \dot{H}^s$ for $s\geq
1$, then
\begin{eqnarray*}
&&\|\mathcal{G}[\omega^{(1)}]f-\mathcal{G}[\omega^{(2)}]f\|_s \\
&\leq& C_1\exp\Big({C_2}\big(\|\tilde\theta^{(1)}\|_{s}+
\|\tilde\theta^{(2)}\|_{s}\big)\Big)\|\tilde\theta^{(1)}-\tilde\theta^{(2)}\|_s\|f\|_1,\nonumber
\end{eqnarray*}
while    for $s\ge 3$,
\begin{eqnarray*}
&&\|\mathcal{G}[\omega^{(1)}]f-\mathcal{G}[\omega^{(2)}]f\|_s \\
&\leq& C_1\exp\Big({C_2}\big(\|\tilde\theta^{(1)}\|_{s-1}+
\|\tilde\theta^{(2)}\|_{s-1}\big)\Big)\Big(\big(\|\tilde\theta^{(1)}\|_s+\|\tilde\theta^{(2)}\|_s\big)\|\tilde\theta^{(1)}-\tilde\theta^{(2)}\|_{s-1}\\
&&\hspace{6cm}+
\|\tilde\theta^{(1)}-\tilde\theta^{(2)}\|_{s}\Big)\|f\|_1,
\end{eqnarray*} where the constants $C_1$ and $C_2$ depend on $s$
only.
\end{lemma}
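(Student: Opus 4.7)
The plan is to decompose the difference $\mathcal{G}[\omega^{(1)}]f-\mathcal{G}[\omega^{(2)}]f$ via the formula
\begin{equation*}
\mathcal{G}[\omega]\gamma = \omega_\alpha\Bigl[\mathcal{H},\tfrac{1}{\omega_\alpha}\Bigr]\gamma + 2i\omega_\alpha\mathcal{K}[\omega]\gamma,
\end{equation*}
and to handle the two pieces separately. For the commutator piece, one adds and subtracts to write
\begin{equation*}
\omega^{(1)}_\alpha\bigl[\mathcal{H},\tfrac{1}{\omega^{(1)}_\alpha}\bigr]f - \omega^{(2)}_\alpha\bigl[\mathcal{H},\tfrac{1}{\omega^{(2)}_\alpha}\bigr]f = \bigl(\omega^{(1)}_\alpha-\omega^{(2)}_\alpha\bigr)\bigl[\mathcal{H},\tfrac{1}{\omega^{(1)}_\alpha}\bigr]f + \omega^{(2)}_\alpha\bigl[\mathcal{H},\tfrac{1}{\omega^{(1)}_\alpha}-\tfrac{1}{\omega^{(2)}_\alpha}\bigr]f.
\end{equation*}
For the $\mathcal{K}$ piece, similarly
\begin{equation*}
2i\omega^{(1)}_\alpha\mathcal{K}[\omega^{(1)}]f - 2i\omega^{(2)}_\alpha\mathcal{K}[\omega^{(2)}]f = 2i\bigl(\omega^{(1)}_\alpha-\omega^{(2)}_\alpha\bigr)\mathcal{K}[\omega^{(1)}]f + 2i\omega^{(2)}_\alpha\bigl(\mathcal{K}[\omega^{(1)}]-\mathcal{K}[\omega^{(2)}]\bigr)f.
\end{equation*}

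For the $s=0$ statement, each term is bounded by placing the $\omega_\alpha$-factor in $L^\infty$ (using the Sobolev embedding and Lemma \ref{lem3.1} at $s=1$) and applying Lemma \ref{lem3.10} with $s=1$ to control the commutators $[\mathcal{H},\tfrac{1}{\omega^{(1)}_\alpha}]f$ and $[\mathcal{H},\tfrac{1}{\omega^{(1)}_\alpha}-\tfrac{1}{\omega^{(2)}_\alpha}]f$ in $H^0$ in terms of $\|f\|_0$ times the $H^1$ norms of $\tfrac{1}{\omega_\alpha}$ or its difference. For the $\mathcal{K}$ piece, I invoke Lemma \ref{lem3.7} on the first summand and the first inequality in Lemma \ref{lem3.8} on the second. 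Combining with the bounds $\|\omega^{(1)}_\alpha-\omega^{(2)}_\alpha\|_1$, $\|\tfrac{1}{\omega^{(1)}_\alpha}-\tfrac{1}{\omega^{(2)}_\alpha}\|_1 \le C\|\tilde\theta^{(1)}-\tilde\theta^{(2)}\|_1$ from Lemma \ref{lem3.1} gives the first statement.

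For the $s\ge 1$ statement, I apply the Banach algebra property in Note \ref{note1.1} to each of the four summands, so that multiplication by $\omega^{(2)}_\alpha$ or $\omega^{(1)}_\alpha-\omega^{(2)}_\alpha$ costs an $H^s$ factor controlled by Lemma \ref{lem3.1}. The commutator $[\mathcal{H},\tfrac{1}{\omega_\alpha}]f$ is estimated in $H^s$ by Lemma \ref{lem3.10} (which needs $f \in H^1$), and the difference of $\mathcal{K}$ operators by Lemma \ref{lem3.8}. Assembling these yields the stated bound. The improvement for $s\ge 3$ comes from using the sharper versions (\ref{5thstate}) and (\ref{6thstate}) of Lemma \ref{lem3.1}, combined with the $s\ge 3$ variant of Lemma \ref{lem3.8}; this distributes one $s$-norm onto the larger factor $\|\tilde\theta^{(1)}\|_s+\|\tilde\theta^{(2)}\|_s$ and the remaining difference is measured in $\|\cdot\|_{s-1}$, except for the single unavoidable $\|\tilde\theta^{(1)}-\tilde\theta^{(2)}\|_s$ term that arises from directly differencing $\omega_\alpha$ in $H^s$.

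The main obstacle I anticipate is bookkeeping: every term generates a product of up to three factors (an $\omega$-difference, a commutator or $\mathcal{K}$-piece, and possibly a remaining $\omega_\alpha$), and each factor has its own admissible norm dictated by either Lemma \ref{lem3.9}--\ref{lem3.10} or Lemma \ref{lem3.7}--\ref{lem3.8}. Care is required to make sure that the factor carrying the $\|\tilde\theta^{(1)}-\tilde\theta^{(2)}\|_s$ is measured at the top norm while every other factor is pushed to an $H^{s-1}$ or $L^\infty$ bound so that the sharper $s\ge 3$ estimate actually emerges rather than an $s$-norm times $s$-norm product. This is routine once the four terms are laid out as above but is the only place where the argument is not purely mechanical.
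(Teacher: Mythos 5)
Your proposal is correct and follows essentially the same route as the paper's proof: the identical four-term decomposition of $\mathcal{G}[\omega^{(1)}]f-\mathcal{G}[\omega^{(2)}]f$ into commutator and $\mathcal{K}$ pieces, estimated with Lemma \ref{lem3.1} for the $\omega_\alpha$ factors, Lemmas \ref{lem3.7}--\ref{lem3.8} for the $\mathcal{K}$ terms, the commutator lemmas, and the two product estimates (Banach algebra for $s\ge1$, the $\|hg\|_s\le C_s(\|h\|_{s-1}\|g\|_s+\|h\|_s\|g\|_{s-1})$ refinement together with (\ref{5thstate})--(\ref{6thstate}) for $s\ge3$). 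The only quibble is that the $H^0$ commutator bound in terms of $\|f\|_0$ is really Lemma \ref{lem3.9} rather than Lemma \ref{lem3.10} (which costs $\|f\|_1$), a labeling slip the paper's own proof shares.
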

\begin{proof}
From (\ref{2.5.0}), it follows that
\begin{multline*}
\| \mathcal{G}[\omega^{(1)}]f- \mathcal{G}[\omega^{(2)}]f  \|_0 \le
\left\| ( \omega^{(1)}_\alpha - \omega^{(2)}_\alpha ) \left [
\mathcal{H}, \frac{1}{\omega^1_\alpha} \right ] f \right\|_0 +
\left\| \omega^{2}_\alpha \left [ \mathcal{H},
\frac{1}{\omega^1_\alpha}-\frac{1}{\omega^2_\alpha} \right ]
f \right\|_0 \\
+ 2 \left\| ( \omega_\alpha^1 - \omega_\alpha^2 ) \mathcal{K}
[\omega^1] f \right\|_0 + + 2 \left\| \omega_\alpha^2 \left (
\mathcal{K} [\omega^1] - \mathcal{K} [\omega^2 ] \right ) f
\right\|_0.
\end{multline*}
Using Lemma \ref{lem3.1}, \ref{lem3.8}, \ref{lem3.10} and $\| h g
\|_0 \le | h|_\infty \| g \|_0 \le C \| h \|_1 \| g \|_0$, the first
statement holds.

 Now, consider
\begin{multline*}
\| \mathcal{G}[\omega^{(1)}]f- \mathcal{G}[\omega^{(2)}]f  \|_s
\le\left \| ( \omega^1_\alpha - \omega^2_\alpha ) \left [
\mathcal{H}, \frac{1}{\omega^1_\alpha} \right ] f \right\|_s +
\left\| \omega^{2}_\alpha \left [ \mathcal{H},
\frac{1}{\omega^1_\alpha}-\frac{1}{\omega^2_\alpha} \right ]
f \right\|_s \\
+ 2 \left\| ( \omega_\alpha^1 - \omega_\alpha^2 ) \mathcal{K}
[\omega^1] f \right\|_s + 2 \left\| \omega_\alpha^2 \left (
\mathcal{K} [\omega^1] - \mathcal{K} [\omega^2 ] \right ) f
\right\|_s.
\end{multline*}
Using Lemmas \ref{lem3.1}, \ref{lem3.8}, \ref{lem3.10}, using $\| h
g \|_s \le C_s \| h \|_s \| g \|_s$ for $s\ge 1$ and $\| h g \|_s
\le C_s (\| h \|_{s-1} \| g \|_s+\|h\|_s\|g\|_{s-1})$ for $s\ge 2$ ,
we see that the last two statements hold.

\end{proof}

\begin{prop}
\label{prop2.5} Assume $\tilde\theta\in \dot{H}^s$ for $s\ge3$. If
$\|\tilde \theta\|_1<\epsilon_1$, then for sufficiently small
$\epsilon_1$,
  there
exists unique solution $ \gamma \in \{u\in H^{s-2}
\big(\mathbb{T}[0,2\pi]\big)|\hat{u}(0)=0\}$ satisfying (\ref{2.6}).
This solution $\gamma$ satisfies the estimates
\begin{eqnarray}
\| \gamma \|_0  &\leq& \frac{C_0\sigma}{L} \| {\tilde \theta} \|_2,\nonumber \\
\|\gamma\|_{s-2}&\leq
&\frac{C_1\sigma}{L}\exp(C_2\|\tilde\theta\|_{s-2}) \|\tilde\theta\|_s, \nonumber\\
\left\| \gamma - \frac{2\pi}{L} \sigma \theta_{\alpha \alpha}
\right\|_{s} &\leq
&\frac{C_3\sigma}{L}\exp(C_4\|\tilde\theta\|_{s-1}) \| {\tilde
\theta} \|_{s} \| {\tilde \theta} \|_{3} ,\nonumber
\end{eqnarray}
where  $C_1$, $C_2$, $C_3$ and $C_4$ depend on $s$, but all are
independent of $L$. And for $s=3$, $C_2=0$.

If $\gamma^{(1)} $ and $\gamma^{(2)} $ correspond to $({\tilde
\theta}^{(1)}, L^{(1)} )$ and $({\tilde \theta}^{(2)} , L^{(2)} ) $,
each in $\mathcal{V}$, then for $3\le s\le r$,
$$ \| \gamma^{(1)} - \gamma^{(2)} \|_{s-2} \le C
\left ( \| {\tilde \theta}^{(1)} - {\tilde \theta}^{(2)} \|_s +
|L^{(1)}-L^{(2)}| \right ),
$$
$$ \left\| \gamma^{(1)} - \frac{2 \pi \sigma}{L^{(1)}} \theta^{(1)}_{\alpha \alpha}
- \gamma^{(2)} + \frac{2 \pi \sigma}{L^{(2)} }
\theta^{(2)}_{\alpha\alpha} \right\|_{s-2} \le C \left ( \| {\tilde
\theta}^{(1)} - {\tilde \theta}^{(2)} \|_{s-2} +  |L^{(1)} - L^{(2)}
| \right ),
$$
where $C$ depends on the diameter of $\mathcal{V}$ and $s$.
\end{prop}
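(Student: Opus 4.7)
The key observation is that by (\ref{Fomega0}), $\mathcal{F}[\omega_0] f = \hat f(0)$ annihilates mean-zero functions. Since $\theta_{\alpha\alpha}$ has zero mean, I seek $\gamma$ in the mean-zero subspace of $H^0$, on which (\ref{2.6}) reduces to
$$\bigl(I + A_\mu(\mathcal{F}[\omega] - \mathcal{F}[\omega_0])\bigr)\gamma = \frac{2\pi\sigma}{L}\theta_{\alpha\alpha}.$$
Applying the first bound of Lemma \ref{lem3.11} with $\tilde\theta^{(2)} = 0$ (and using $\mathcal{F} = \Re(\mathcal{G}/i)$), the $H^0 \to H^0$ operator norm of $A_\mu(\mathcal{F}[\omega] - \mathcal{F}[\omega_0])$ is at most $C_1 \exp(C_2\|\tilde\theta\|_1)\|\tilde\theta\|_1$, which is $\le 1/2$ after shrinking $\epsilon_1$ (uniformly for $|A_\mu| \le 1$). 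A Neumann series yields the unique mean-zero $\gamma \in H^0$ with $\|\gamma\|_0 \le 2(2\pi\sigma/L)\|\theta_{\alpha\alpha}\|_0 \le (C\sigma/L)\|\tilde\theta\|_2$ via Corollary \ref{cortheta}.

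For higher regularity, I re-read the equation as $\gamma = (2\pi\sigma/L)\theta_{\alpha\alpha} - A_\mu(\mathcal{F}[\omega] - \mathcal{F}[\omega_0])\gamma$ and bootstrap. The second statement of Lemma \ref{lem3.11}, specialized to $\tilde\theta^{(2)} = 0$, gives $\|(\mathcal{F}[\omega] - \mathcal{F}[\omega_0])\gamma\|_s \le C_1 \exp(C_2\|\tilde\theta\|_s)\|\tilde\theta\|_s \|\gamma\|_1$ for $s \ge 1$. Taking $s = 1$, smallness of $\|\tilde\theta\|_1$ lets me absorb $\|\gamma\|_1$ to the left, yielding $\|\gamma\|_1 \le (C\sigma/L)\|\tilde\theta\|_3$. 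Feeding this back into the same estimate at level $s-2$ produces the middle bound on $\|\gamma\|_{s-2}$. For the refined estimate I use that $\gamma - (2\pi\sigma/L)\theta_{\alpha\alpha} = -A_\mu(\mathcal{F}[\omega] - \mathcal{F}[\omega_0])\gamma$, and for $s \ge 3$ apply the sharper third statement of Lemma \ref{lem3.11} (again with $\tilde\theta^{(2)} = 0$) whose $H^s$ bound is of the form $C_1 \exp(C_2\|\tilde\theta\|_{s-1})(1 + \|\tilde\theta\|_{s-1})\|\tilde\theta\|_s \|\gamma\|_1$. Substituting $\|\gamma\|_1 \le (C\sigma/L)\|\tilde\theta\|_3$ reproduces the claimed form.

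For the Lipschitz statements, subtracting the two equations gives
$$\bigl(I + A_\mu\mathcal{F}[\omega^{(1)}]\bigr)(\gamma^{(1)} - \gamma^{(2)}) = 2\pi\sigma\Bigl(\frac{\theta^{(1)}_{\alpha\alpha}}{L^{(1)}} - \frac{\theta^{(2)}_{\alpha\alpha}}{L^{(2)}}\Bigr) - A_\mu\bigl(\mathcal{F}[\omega^{(1)}] - \mathcal{F}[\omega^{(2)}]\bigr)\gamma^{(2)}.$$
Lemma \ref{lem3.11} estimates the last term in $H^{s-2}$ by $C\|\tilde\theta^{(1)} - \tilde\theta^{(2)}\|_s \|\gamma^{(2)}\|_1$ with $\|\gamma^{(2)}\|_1$ controlled by the previous step; the first driving term is obviously bounded by $C(\|\tilde\theta^{(1)} - \tilde\theta^{(2)}\|_s + |L^{(1)} - L^{(2)}|)$. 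I then invert $I + A_\mu\mathcal{F}[\omega^{(1)}]$ on the mean-zero subspace at $H^0$ by the same Neumann argument and bootstrap in regularity exactly as in the second paragraph. The second Lipschitz bound follows by writing $\gamma^{(j)} - (2\pi\sigma/L^{(j)})\theta^{(j)}_{\alpha\alpha} = -A_\mu(\mathcal{F}[\omega^{(j)}] - \mathcal{F}[\omega_0])\gamma^{(j)}$, splitting the difference as $(\mathcal{F}[\omega^{(1)}] - \mathcal{F}[\omega^{(2)}])\gamma^{(1)} + (\mathcal{F}[\omega^{(2)}] - \mathcal{F}[\omega_0])(\gamma^{(1)} - \gamma^{(2)})$, and applying Lemma \ref{lem3.11} to each piece together with the first Lipschitz bound.

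The main obstacle is the regularity mismatch in the inversion step: Lemma \ref{lem3.11} controls $\|(\mathcal{F}[\omega] - \mathcal{F}[\omega_0])\gamma\|_s$ only in terms of $\|\gamma\|_1$, not $\|\gamma\|_s$, so there is no direct $H^{s-2} \to H^{s-2}$ inversion with small parameter $\|\tilde\theta\|_1$ alone. The scheme must therefore be organized as an $H^0$-inversion followed by a two-step bootstrap (first to $H^1$, then to $H^{s-2}$), exploiting throughout the smoothing property that the nonidentity part of the operator acts on $\gamma$ only via $\|\gamma\|_1$.
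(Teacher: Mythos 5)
Your proposal is correct and follows essentially the same route as the paper: use $\mathcal{F}[\omega_0]\gamma=\hat\gamma(0)=0$ to rewrite (\ref{2.6}) as $\bigl(I+A_\mu(\mathcal{F}[\omega]-\mathcal{F}[\omega_0])\bigr)\gamma=\frac{2\pi\sigma}{L}\theta_{\alpha\alpha}$, invert on the mean-zero subspace of $H^0$ via Lemma \ref{lem3.11} and smallness of $\|\tilde\theta\|_1$, bootstrap to higher norms through the smoothing estimates (the paper's terse ``for sufficiently small $\epsilon_1$'' is exactly your explicit $H^1$-then-$H^{s-2}$ step, using $C_2=0$ at $s=3$), and obtain the Lipschitz bounds from the same difference splitting as in (\ref{mathcalF}). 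The only cosmetic difference is which factor carries the difference $\gamma^{(1)}-\gamma^{(2)}$ in that splitting, which changes nothing.
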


\begin{proof}

From (\ref{Fomega0}),  since ${\hat \gamma} (0)=0$,
$\mathcal{F} [\omega_0] \gamma =0$. Therefore, (\ref{2.6}) implies
$$ \left [I + A_{\mu} \left ( \mathcal{F} [\omega] -\mathcal{F} [\omega_0]
\right ) \right ] \gamma = \frac{2\pi\sigma}{L} \theta_{\alpha \alpha}
$$
Therefore, if $\tilde\theta \in \dot{H}^2$, then Lemma \ref{lem3.11}
implies
$$ \| \mathcal{F} [\omega] \gamma -\mathcal{F} [\omega_0 ] \gamma \|_0
\le C \| {\tilde \theta} \|_1 \|\gamma \|_0.$$ where $C$ depends on
$\epsilon_1$. So, for sufficiently small $\epsilon_1$, if $\|\tilde
\theta\|_1 \leq \epsilon_1$, then
$$ \left [1 + A_{\mu} \left ( \mathcal{F} [\omega] -\mathcal{F}
[\omega_0]  \right ) \right ]^{-1} $$ exists  and from the bounds
above and Corollary \ref{cortheta},
$$ \| \gamma \|_0  \le \frac{C_0\sigma}{L} \| \theta_{\alpha \alpha} \|_0
\le \frac{C_0 \sigma}{L} \| {\tilde \theta} \|_2.$$ Further, we
obtain from the second part of Lemma \ref{lem3.11},
$$ \| \mathcal{F} [ \omega]\gamma  - \mathcal{F} [\omega_0] \gamma \|_{s-2}
\le C_1\exp(C_2\|\tilde\theta\|_{s-2}) \| {\tilde \theta} \|_{s-2}
\| \gamma \|_1,  $$ where $C_1$ and $C_2$ depend on  $s$. Therefore,
for $s \ge 3$, it follows from (\ref{2.6}) that
$$ \| \gamma \|_{s-2} \le \frac{2\pi \sigma }{L} \| {\tilde \theta } \|_{s}
+ C_1\exp(C_2\|\tilde\theta\|_{s-2}) \| {\tilde \theta} \|_{s-2} \|
\gamma \|_1 $$ which $C_1$ and $C_2$ depend on  $s$, which implies
for sufficiently small $\epsilon_1$ that the second statement holds.

 For the
third statement, we note that (\ref{2.6}) and the third part of
Lemma \ref{lem3.11} implies that
$$\left \| \gamma - \frac{2 \pi \sigma}{L} \theta_{\alpha \alpha}
\right\|_{s}\leq \frac{C_3\sigma}{L}
\exp(C_4\|\tilde\theta\|_{s-1})\| {\tilde \theta} \|_{s} \| {\tilde
\theta} \|_{3}, $$ where  $C_3$ and $C_4$ depend on  $s$.

From (\ref{2.6}), we obtain
$$ \| \gamma^{(1)} - \gamma^{(2)} \|_{s-2}  \le \left\|
\frac{2 \pi \sigma}{L^{(1)}} {\theta}^{(1)}_{\alpha \alpha} -
\frac{2 \pi \sigma}{L^{(2)}} {\theta}^{(2)}_{\alpha \alpha}
\right\|_{s-2} + \| \mathcal{F} [ \omega^{(1)} ] \gamma^{(1)} -
\mathcal{F} [ \omega^{(2)} ] \gamma^{(2)} \|_{s-2}. $$ Using
$$  \| \frac{1}{L^{(1)}} \theta^{(1)}_{\alpha \alpha} - \frac{1}{L^{(2)}}
\theta^{(2)}_{\alpha \alpha} \|_{s-2} \le
\frac{|L^{(1)}-L^{(2)}|}{L^{(1)} L^{(2)}} \| \theta^{(1)}_{\alpha
\alpha} \|_{s-2} + \frac{1}{L^{(2)}} \| \theta^{(1)}_{\alpha \alpha}
- \theta^{(2)}_{\alpha \alpha} \|_{s-2},
$$
and using Lemmas \ref{lem3.11} and the first part of the proposition,
\begin{multline}
\label{mathcalF} \| \mathcal{F} [\omega^{(1)} ] \gamma^{(1)} -
\mathcal{F} [\omega^{(2)} ] \gamma^{(2)} \|_{s-2} \le  \left \|
\mathcal{F} [\omega^{(1)} ] (\gamma^{(1)}-\gamma^{(2)} )
- \mathcal{F} [\omega_0 ] (\gamma^{(1)} - \gamma^{(2)} ) \right  \|_{s-2} \\
+ \left\| \mathcal{F} [\omega^{(1)} ] \gamma^{(2)} - \mathcal{F}
[\omega^{(2)} ]\gamma^{(2)}
\right\|_{s-2} \\
\le C \| {\tilde \theta}^{(2)} \|_{s-2} \| \gamma^{(1)} -
\gamma^{(2)} \|_1 + \frac{C}{L^{(2)}} \| {\tilde \theta}^{(1)} -
{\tilde \theta}^{(2)} \|_{s-2} \| {\tilde \theta}^{(2)} \|_3
\end{multline}
with $C$ depending on $s$ and the diameter of $\mathcal{V}$. The
fourth statement in the proposition follows since $({\tilde
\theta}^{(1)}, L^{(1)} ) , ({\tilde \theta}^{(2)}, L^{(2)}) \in
\mathcal{V} $. The fifth statement follows from (\ref{2.6}) by the
same set of arguments as above.

\end{proof}

\begin{lemma}
\label{lemUTtheta}  Assume $\tilde\theta\in\dot{H}^s$ for $s\ge3$.
If $\|\tilde\theta\|_1<\epsilon_1$, for sufficiently small
$\epsilon_1$, the corresponding $U$ and $T$ in (\ref{2.2}) and
(B.3), with ${\hat \theta} (1)$ and ${\hat \theta} (-1)$ determined
from ${\tilde \theta}$ using (B.4), satisfies the following
estimates:
$$ \left\| U - \frac{2 \pi^2 \sigma}{L^2} \mathcal{H} [\theta_{\alpha\alpha} ]
\right\|_0 \le \frac{C_1 \sigma }{L^2} \| {\tilde \theta} \|_1 \|
{\tilde \theta} \|_2,$$
$$ \left\| U - \frac{2 \pi^2 \sigma}{L^2} \mathcal{H} [\theta_{\alpha \alpha}]
\right\|_{s-2} \le \frac{C_2\sigma}{L^2}
\exp(C_3\|\tilde\theta\|_{s-2})\| {\tilde \theta}
\|_{s-2}\|\tilde\theta\|_{3},$$
$$ \left\| U - \frac{2 \pi^2 \sigma}{L^2} \mathcal{H} [\theta_{\alpha \alpha}]
\right\|_{s} \le \frac{C_2\sigma}{L^2}
\exp(C_3\|\tilde\theta\|_{s-1})\| {\tilde \theta}
\|_{s}\|\tilde\theta\|_{3},$$
$$\|U\|_{s-2}\le \frac{C_2\sigma}{L^2}
\exp(C_3\|\tilde\theta\|_{s-2})\|\tilde\theta\|_s,$$
$$ \| T \|_{s-1} \le  \frac{C_2 \sigma}{L^2}\exp(C_3\|\tilde\theta\|_{s})  \| {\tilde \theta} \|_{s}, $$
where $C_1$ depends on $\epsilon_1$, $C_2$ and $C_3$ depend on $s$.

 If $U^{(1)}$ and $U^{(2)}$ (or $T^{(1)}$, $T^{(2)}$) correspond respectively
to $\left ( {\tilde \theta}^{(1)}, L^{(1)} \right ) $ and $\left (
{\tilde \theta}^{(2)}, L^{(2)} \right ) $, each in $\mathcal{V} $,
then for  $r\ge3$,
$$ \| U^{(1)} - U^{(2)} \|_{r-2}  \le  C_4 \left (
\| {\tilde \theta}^{(1)} - {\tilde \theta}^{(2)}  \|_{r} + |L^{(1)}
- L^{(2)} |  \right ), $$
\begin{multline*}
\left\| U^{(1)} - \frac{2 \pi^2}{(L^{(1)})^2} \mathcal{H}
[\theta^{(1)}_{\alpha \alpha} ] - U^{(2)} - \frac{2
\pi^2}{(L^{(2)})^2} \mathcal{H} [\theta^{(2)}_{\alpha \alpha} ]
\right\|_{r-2} \\
\le C_4\left (  \| {\tilde \theta}^{(1)}\|_r+ \| {\tilde
\theta}^{(2)}\|_r\right)\left(\| {\tilde \theta}^{(1)} - {\tilde
\theta}^{(2)}  \|_{r-2} + |L^{(1)} - L^{(2)} | \right ),
\end{multline*}
$$ \| T^{(1)} - T^{(2)} \|_{r-1}  \le C_4\left ( \|
{\tilde \theta}^{(1)} - {\tilde \theta}^{(2)}  \|_{r} + |L^{(1)} -
L^{(2)} | \right ),$$ where $C_4$ depends on the diameter of
$\mathcal{V}$ and $r$.
\end{lemma}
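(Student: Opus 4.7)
The plan is to exploit the decomposition of $U$ in (\ref{2.2}),
$$U=\frac{\pi}{L}\mathcal{H}[\gamma]+\frac{\pi}{L}\Re\big(\mathcal{G}[\omega]\gamma\big),$$
together with the fact that (\ref{omega0f}) and $\hat\gamma(0)=0$ imply $\mathcal{G}[\omega_0]\gamma=0$, which permits the reference-state subtraction
$$U-\frac{2\pi^2\sigma}{L^2}\mathcal{H}[\theta_{\alpha\alpha}]=\frac{\pi}{L}\mathcal{H}\Big(\gamma-\frac{2\pi\sigma}{L}\theta_{\alpha\alpha}\Big)+\frac{\pi}{L}\Re\big(\mathcal{G}[\omega]\gamma-\mathcal{G}[\omega_0]\gamma\big).$$
Both summands then carry a factor that vanishes at $\tilde\theta=0$, giving the desired nonlinear structure of the bounds.

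First I would derive the $H^0$ estimate by combining the reformulation $\gamma-(2\pi\sigma/L)\theta_{\alpha\alpha}=-A_\mu\mathcal{F}[\omega]\gamma=-A_\mu(\mathcal{F}[\omega]-\mathcal{F}[\omega_0])\gamma$, which comes from (\ref{2.6}), with the $s=0$ statement of Lemma \ref{lem3.11}: this yields $\|\gamma-(2\pi\sigma/L)\theta_{\alpha\alpha}\|_0\le C\|\tilde\theta\|_1\|\gamma\|_0$, and then the first part of Proposition \ref{prop2.5} gives $\|\gamma\|_0\le C\sigma\|\tilde\theta\|_2/L$, producing the prefactor $\sigma/L^2$ together with the product $\|\tilde\theta\|_1\|\tilde\theta\|_2$. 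The second summand is handled identically by the same $s=0$ statement of Lemma \ref{lem3.11}. For the two middle statements in $H^{s-2}$ and $H^s$, the same decomposition is used, but now the higher-regularity parts of Lemma \ref{lem3.11} and Proposition \ref{prop2.5} are invoked: one of the two factors $\|\tilde\theta\|_s$ or $\|\tilde\theta\|_{s-2}$ comes from the commutator/remainder estimate in Lemma \ref{lem3.11}, while the other factor together with $\|\tilde\theta\|_3$ arises from the bound on $\|\gamma\|_1$ or $\|\gamma\|_{s-2}$ in Proposition \ref{prop2.5}. The bound on $\|U\|_{s-2}$ is then immediate via the triangle inequality, using $\|\mathcal{H}[\theta_{\alpha\alpha}]\|_{s-2}\le\|\theta_{\alpha\alpha}\|_{s-2}\le 2\|\tilde\theta\|_s$ from Corollary \ref{cortheta}.

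For $T$, I would differentiate the defining formula in (B.3) to obtain $T_\alpha=(1+\theta_\alpha)U-\frac{1}{2\pi}\int_0^{2\pi}(1+\theta_\alpha)U\,d\alpha$; since $T(0)=0$ and $T$ is periodic, $\|T\|_{s-1}$ is controlled by $\|T_\alpha\|_{s-2}$ up to an explicit constant times $\|U\|_0$, and the dominant term is bounded, for $s-2\ge1$, by $C(1+\|\theta_\alpha\|_{s-2})\|U\|_{s-2}$ via the Banach algebra property (Note \ref{note1.1}), yielding the stated $T$-estimate after inserting the $U$ bound. For the difference estimates, one writes $U^{(1)}-U^{(2)}$ using the same decomposition applied to each solution, splits $\mathcal{G}[\omega^{(1)}]\gamma^{(1)}-\mathcal{G}[\omega^{(2)}]\gamma^{(2)}$ as $\mathcal{G}[\omega^{(1)}](\gamma^{(1)}-\gamma^{(2)})+(\mathcal{G}[\omega^{(1)}]-\mathcal{G}[\omega^{(2)}])\gamma^{(2)}$, and invokes the corresponding difference bounds in Proposition \ref{prop2.5} and Lemma \ref{lem3.11}; the bound on $T^{(1)}-T^{(2)}$ then follows from the product rule applied to $(1+\theta_\alpha)U$ combined with the $U$ difference estimate.

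The main obstacle I anticipate is the bookkeeping in the refined $H^{r-2}$ difference bound for $U^{(1)}-U^{(2)}-(2\pi^2/(L^{(1)})^2)\mathcal{H}[\theta^{(1)}_{\alpha\alpha}]+(2\pi^2/(L^{(2)})^2)\mathcal{H}[\theta^{(2)}_{\alpha\alpha}]$: the right-hand side carries only the low-norm factor $\|\tilde\theta^{(1)}-\tilde\theta^{(2)}\|_{r-2}$, with the top-derivative burden shifted onto $\|\tilde\theta^{(1)}\|_r+\|\tilde\theta^{(2)}\|_r$, so when distributing derivatives one must systematically use the sharper Banach-algebra inequality $\|fg\|_s\le C(\|f\|_{s-1}\|g\|_s+\|f\|_s\|g\|_{s-1})$ for $s\ge2$ together with the more refined higher-regularity difference statements of Lemma \ref{lem3.11} and Proposition \ref{prop2.5} (those labelled for $s\ge3$), so that no top-derivative norm of the perturbation is ever multiplied by another top-derivative norm. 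Once this pairing is arranged, every remaining piece reduces to a routine application of the preliminary lemmas.
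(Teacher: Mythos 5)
Your proposal follows essentially the same route as the paper's own proof: the identical decomposition $U - \frac{2\pi^2\sigma}{L^2}\mathcal{H}[\theta_{\alpha\alpha}] = \frac{\pi}{L}\mathcal{H}\big[\gamma - \frac{2\pi\sigma}{L}\theta_{\alpha\alpha}\big] + \frac{\pi}{L}\Re\big(\mathcal{G}[\omega]\gamma - \mathcal{G}[\omega_0]\gamma\big)$, with Lemma \ref{lem3.11} controlling the $\mathcal{G}$-difference, Proposition \ref{prop2.5} supplying the bounds on $\gamma$ and on $\gamma-\frac{2\pi\sigma}{L}\theta_{\alpha\alpha}$, the $T$ estimates obtained from the second equation of (B.3) via the Banach algebra property, and the difference estimates from the same splitting of $\mathcal{G}[\omega^{(1)}]\gamma^{(1)}-\mathcal{G}[\omega^{(2)}]\gamma^{(2)}$ into $(\mathcal{G}[\omega^{(1)}]-\mathcal{G}[\omega^{(2)}])\gamma^{(1)}$ and $\mathcal{G}[\omega^{(2)}](\gamma^{(1)}-\gamma^{(2)})$ together with the $L$-difference term. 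This matches the paper's argument in structure and in the lemmas invoked, so no substantive deviation or gap to report.
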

\begin{proof}
From (\ref{2.5.0}) and (\ref{omega0f}), Lemma \ref{lem3.11}
it follows that
$$ \left\| U - \frac{\pi}{L} \mathcal{H} \gamma \right\|_0 \le
\frac{\pi}{L} \| \mathcal{G} [\omega] \gamma - \mathcal{G}
[\omega_0] \gamma \|_0 \le  \frac{C_1}{L}\| {\tilde \theta} \|_1
\|\gamma\| _0, $$with $C_1$ depending on $\epsilon_1$. Using
Proposition \ref{prop2.5}, we obtain
$$ \left\| U - \frac{2 \pi^2 \sigma}{L^2} \mathcal{H} [\theta_{\alpha\alpha} ]
\right\|_0 \le \frac{C_1\sigma}{L^2} \| {\tilde \theta} \|_1
\|{\tilde \theta} \|_2
$$
with $C_1$ depending on $\epsilon_1$. Again from (\ref{2.5.0}) and
(\ref{omega0f}), Lemma \ref{lem3.11} and Proposition \ref{prop2.5},
we obtain
\begin{eqnarray*}
\left\| U -\frac{2 \pi^2 \sigma}{L^2} \mathcal{H} [ \theta_{\alpha
\alpha} ] \right\|_{s} &\le& \frac{\pi}{L}
\| \mathcal{G} [\omega] \gamma - \mathcal{G} [\omega_0] \gamma \|_{s} +\frac{\pi}{L}\left\|\mathcal{H}
\left[\gamma-\frac{2\pi\sigma}{L}\theta_{\alpha\alpha}\right]\right\|_s\\
&\le& \frac{C_2}L\exp(C_3\|\tilde\theta\|_{s-1})\|  {\tilde \theta}
\|_{s} \|\gamma\| _1 \le \frac{C_2
\sigma}{L^2}\exp(C_3\|\tilde\theta\|_{s-1})  \|{\tilde \theta}
\|_{s} \| {\tilde \theta} \|_3,
\end{eqnarray*}
where $C_2$ and $C_3$ depend on $s$. Similarly, we can get the
second and fourth statements. This gives all the desired results for
$U$ in terms of ${\tilde \theta}$.

Again, from noting that the second equation from (B.3), and the
above estimates on $U$, we obtain
\begin{eqnarray}\label{T0}
\|T\|_0\leq C\|U(1+\theta_\alpha)\|_1\leq
\frac{C_2\sigma}{L^2}\exp(C_3\|\tilde\theta\|_3)\|\tilde\theta\|_3,
\end{eqnarray}
and
\begin{eqnarray*}
&&\left \| T_\alpha
- \frac{2 \pi^2 \sigma}{L^2} \mathcal{H} [\theta_{\alpha\alpha} ] \right\|_{s-2} \\
&\le& \left\| U - \frac{2 \pi^2 \sigma}{L^2} \mathcal{H}
[\theta_{\alpha\alpha} ] \right\|_{s-2}
+ \| U \theta_\alpha \|_{s-2}  \\
&\le& \frac{C_2 \sigma}{L^2}\exp(C_3\|\tilde\theta\|_{s}) \left [
\|{\tilde \theta} \|_{s-2} \|{\tilde \theta} \|_3 + \| {\tilde
\theta}\|_{s}\| {\tilde \theta}\|_{s-1} \right ],
\end{eqnarray*}
where $C_2$ and $C_3$ depend on $s$. Hence the fourth statement
holds.

 Also, we obtain from
(\ref{2.5.0}), (\ref{omega0f}),
\begin{multline*}
\| U^{(1)} - \frac{2 \pi^2}{(L^{(1)})^2} \mathcal{H}
[\theta^{(1)}_{\alpha\alpha} ] - U^{(2)} + \frac{2
\pi^2}{(L^{(2)})^2} \mathcal{H} [\theta^{(2)}_{\alpha\alpha} ]
\|_{r-2} \le \| \frac{\pi}{L_1} \mathcal{G} [\omega^{(1)} ]
\gamma^{(1)}
- \frac{\pi}{L_2} \mathcal{G} [\omega^{(2)} ] \gamma^{(2)} \|_{r-2} \\
\le  \frac{|L^{(1)}-L^{(2)}|}{L^{(1)} L^{(2)}} \| \mathcal{G}
[\omega^{(1)}] \gamma^{(1)} - \mathcal{G} [\omega_0] \mathcal
\gamma^{(1)} \|_{r-2} + \frac{C}{L^{(2)}} \| \mathcal{G} [\omega^{(1)}
] \gamma^{(1)} - \mathcal{G} [\omega^{(2)} ] \gamma^{(1)} \|_{r-2}
\\
+ \frac{C}{L^{(2)}} \left\| \mathcal{G} [\omega^{(2)} ] \left
(\gamma^{(1)}-\gamma^{(2)} \right ) \right\|_{r-2}.
\end{multline*}
The stated results on differences between $U^{(1)} $, $U^{(2)}$
follow from Lemma \ref{lem3.11} and Proposition \ref{prop2.5} on
using the condition that each of $\left ( {\tilde \theta}^{(1)} ,
L^{(1)} \right ), \left ( {\tilde \theta}^{(2)} , L^{(2)} \right )
\in \mathcal{V} $. We note the second equation from (B.3), so the
stated results follow for $T^{(1)} - T^{(2)}$  as well.
\end{proof}

\section{Energy estimate}

 We define  energy  we will use is the $H^r\big(\mathbb{T}[0,2\pi]\big)$ norm
of $\Tilde\theta_n$; it is defined by
\begin{displaymath}
E_n(t)=\frac{1}{2}\int_0^{2\pi}(D^r\Tilde\theta_n)^2d\alpha.
\end{displaymath}

We first need to estimate the following terms in the evolution
equations.

\begin{lemma}
\label{prel2.14} Assume $ X_n = \left ( {\tilde \theta}_n, L_n
\right )$ is a solution to the initial value problem (\ref{2.12})
with ${\tilde \theta}_n \in \mathcal{B}$ for $r \ge 3$. If the size
$\epsilon$ of the ball $\mathcal{B}$ is small enough, then ${\tilde
\theta}_n (., t) \in \mathcal{B} $ for all $t$ for which solution
exists. Further, the corresponding energy $E_n$, as defined above,
satisfies the inequality
$$ \frac{d E_n}{dt} \le -\frac{\pi^2 \sigma}{L_n^2} E_n. $$
\end{lemma}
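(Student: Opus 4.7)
My plan begins with a standard bootstrap. By continuity of $\tilde\theta_n$ in $\dot H^r$, the time $T^\ast=\sup\{t:\tilde\theta_n(\cdot,s)\in\mathcal{B}\text{ for all }s\in[0,t]\}$ is positive, and on $[0,T^\ast)$ all the preliminary estimates of \S3, together with Proposition~\ref{prop2.5} and Lemma~\ref{lemUTtheta}, are available. Once the energy inequality $\frac{dE_n}{dt}\le -\frac{\pi^2\sigma}{L_n^2}E_n$ is established on this interval, it forces $\|\tilde\theta_n(\cdot,t)\|_r\le\|\tilde\theta_n(\cdot,0)\|_r<\epsilon$, preventing exit from $\mathcal{B}$, so $T^\ast$ coincides with the full interval of existence.

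To derive the energy inequality, I would differentiate and substitute from (C.1):
\[
\frac{dE_n}{dt}=\frac{2\pi}{L_n}\int_0^{2\pi}D^r\tilde\theta_n\,D^r\bigl(U_{n,\alpha}+T_n(1+\theta_{n,\alpha})\bigr)\,d\alpha,
\]
where the projection $P_n$ drops out because $D^r\tilde\theta_n$ is supported on Fourier modes with $2\le|k|\le n$. Next I split $U_n$ into its linear principal part $\tfrac{2\pi^2\sigma}{L_n^2}\mathcal{H}\theta_{n,\alpha\alpha}$ and a remainder $R_U$ with $\|R_U\|_r\le C\sigma L_n^{-2}\|\tilde\theta_n\|_r\|\tilde\theta_n\|_3$ (Lemma~\ref{lemUTtheta}). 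The contribution of the principal part, via $\mathcal{H}D=\Lambda$ and skew-adjointness, equals
\[
-\frac{8\pi^4\sigma}{L_n^3}\sum_{|k|\ge 2}|k|^{2r+3}|\hat\theta_n(k)|^2,
\]
the modes $k=0,\pm1$ of $\theta_n$ being orthogonal to $D^r\tilde\theta_n$. The crucial spectral gap $|k|\ge 2\Rightarrow |k|^{2r+3}\ge 8|k|^{2r}$ then bounds this by $-\tfrac{64\pi^3\sigma}{L_n^3}E_n$, substantially stronger than the target rate and thus leaving ample margin to absorb everything else.

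For the error terms I would integrate by parts, writing $\int D^r\tilde\theta_n\cdot D^{r+1}R_U\,d\alpha=-\int D^{r+1}\tilde\theta_n\cdot D^rR_U\,d\alpha$, and use Cauchy--Schwarz together with the complementary gap bound $\|\tilde\theta_n\|_{r+1}^2\le\tfrac12\sum_{|k|\ge 2}|k|^{2r+3}|\hat\theta_n(k)|^2$ to obtain a contribution of size $O(\epsilon)$ relative to the principal dissipation. The term involving $T_n(1+\theta_{n,\alpha})$ is treated analogously, combining the Banach algebra property (Note~\ref{note1.1}), Lemma~\ref{lemUTtheta}'s estimate $\|T_n\|_{r-1}\le C\sigma L_n^{-2}\|\tilde\theta_n\|_r$, and integration by parts on the top-derivative piece. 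Every error bound picks up at least one factor of $\|\tilde\theta_n\|_r$ or $\|\tilde\theta_n\|_3$, each $\le C\epsilon$; Young's inequality then lets me absorb each error into the principal term for $\epsilon$ sufficiently small, leaving $\frac{dE_n}{dt}\le -\frac{c\sigma}{L_n^3}\sum_{|k|\ge 2}|k|^{2r+3}|\hat\theta_n(k)|^2$ with a definite $c>0$, which by the spectral gap yields the stated rate.

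The main obstacle will be the careful bookkeeping for the $T_n(1+\theta_{n,\alpha})$ contribution: $T_n$ is defined nonlocally through $U_n$, so one must distribute the $r$ derivatives via Leibniz, transfer the top derivative onto $D^r\tilde\theta_n$ where necessary, and ensure that in every piece the smallness coming from $\|\tilde\theta_n\|_3\le C\|\tilde\theta_n\|_r<C\epsilon$ dominates the $\|\tilde\theta_n\|_{r+1}$ that arises alongside, so the quadratic form $\sum|k|^{2r+3}|\hat\theta_n(k)|^2$ remains the controlling term.
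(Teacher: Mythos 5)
Your bootstrap structure, the splitting of $U_n$ into $\frac{2\pi^2\sigma}{L_n^2}\mathcal{H}[\theta_{n,\alpha\alpha}]$ plus a cubic remainder, and the Fourier computation of the dissipative principal term all match the paper, but there is a genuine gap in your handling of the tangential term $T_n(1+\theta_{n,\alpha})$. Your claim that \emph{every} bound arising from this term picks up a smallness factor $\|\tilde\theta_n\|_r$ or $\|\tilde\theta_n\|_3\le C\epsilon$ \emph{relative to the dissipation} is false for the piece in which the factor $1$ of $(1+\theta_{n,\alpha})$ is taken. Since $T_{n,\alpha}=(1+\theta_{n,\alpha})U_n-\mathrm{const}$, the linearization of $T_n$ is $\frac{2\pi^2\sigma}{L_n^2}\mathcal{H}[\theta_{n,\alpha}]$ up to an additive constant, so $\frac{2\pi}{L_n}\int_0^{2\pi}D^r\tilde\theta_n\,D^rT_n\,d\alpha$ contains the purely quadratic, \emph{positive} contribution $\frac{4\pi^3\sigma}{L_n^3}\int_0^{2\pi}D^r\tilde\theta_n\,\Lambda D^r\tilde\theta_n\,d\alpha\sim\frac{\sigma}{L_n^3}\sum_{|k|\ge2}|k|^{2r+1}|\hat\theta_n(k)|^2$, with a universal $O(1)$ coefficient and no extra factor of $\epsilon$: both this term and the dissipation are quadratic in $\tilde\theta_n$, so "having a factor $\|\tilde\theta_n\|_r\le\epsilon$" in a crude bound such as $|\int D^r\tilde\theta_n D^rT_n|\le\|\tilde\theta_n\|_{r+1}\|T_n\|_{r-1}\le C\sigma L_n^{-2}\|\tilde\theta_n\|_{r+1}\|\tilde\theta_n\|_r$ does not make it small relative to $\|\tilde\theta_n\|^2_{r+3/2}$; the constant $C$ from Lemma \ref{lemUTtheta} is not explicit, so Young's inequality cannot absorb it for small $\epsilon$.

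This destabilizing term must instead be computed exactly and defeated by the spectral gap, which is precisely what the paper does: its $I_2=\int_0^{2\pi}D^r\tilde\theta_n D^{r-1}(P_nU_n)\,d\alpha$ (coming from $D^rT_n=D^{r-1}U_n+D^{r-1}(\theta_{n,\alpha}U_n)$) has principal part $+\frac{2\pi^2\sigma}{L_n^2}\|\tilde\theta_n\|^2_{r+1/2}$, which is then dominated by the dissipation $-\frac{2\pi^2\sigma}{L_n^2}\|\tilde\theta_n\|^2_{r+3/2}$ only because the absent modes $k=0,\pm1$ give $\|\tilde\theta_n\|^2_{r+1/2}\le\frac14\|\tilde\theta_n\|^2_{r+3/2}$, leaving the net coefficient $-\frac{3\pi^2\sigma}{2L_n^2}$; the genuinely cubic pieces ($\theta_{n,\alpha}U_n$ inside $T_{n,\alpha}$, the $T_n\theta_{n,\alpha}$ product, and the $U_n$ remainder) are handled exactly as you propose. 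Relatedly, your remark that the dissipation $-\frac{64\pi^3\sigma}{L_n^3}E_n$ leaves "ample margin to absorb everything else" is not the relevant comparison: the competing quadratic term lives at the level of $\|\tilde\theta_n\|^2_{r+1/2}$, not $\|\tilde\theta_n\|^2_r$, so the margin must be measured between the exact coefficients of $\|\cdot\|^2_{r+3/2}$ and $\|\cdot\|^2_{r+1/2}$, which forces the exact extraction of the linear part of $T_n$ described above.
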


\begin{proof}
For $r\geq 3$, taking the derivative of $E_n(t)$ with respect to
$t$, we have
\begin{displaymath}
\frac{d}{dt}E_n(t)=\int_0^{2\pi}(D^r\Tilde\theta_n)
(D^r\Tilde\theta_{n,t})d\alpha.
\end{displaymath}
Using (C.1), (C.2) and (\ref{2.11}), on integration by parts we find
\begin{eqnarray*}
\frac{d}{dt} E_n &=& I_1 + I_2 + I_3 + I_4, {\rm where} \\
I_1 &=& -\int_0^{2\pi} D^{r+1} {\tilde \theta}_n
D^r \left ( P_n U_n \right ) d\alpha, \\
I_2 &=& \int_0^{2\pi} D^r {\tilde \theta}_n D^{r-1}
\left ( P_n U_n \right )  d\alpha, \\
I_3 &=& \int_0^{2\pi} D^r {\tilde \theta}_n D^{r-1}
P_n \left ( \theta_{n,\alpha} U_n \right )  d\alpha, \\
I_4 &=& \int_0^{2\pi} D^r {\tilde \theta}_n D^{r} P_n \left ( T_n
\theta_{n,\alpha} \right )  d\alpha.
\end{eqnarray*}
On using ${\tilde \theta}_n = P_n \theta_n $, we can rewrite
\begin{equation*}
I_1 = -\frac{2 \pi^2 \sigma}{L^2_n} \int_0^{2 \pi} D^{r+1} {\tilde
\theta}_n D^{r+2} \mathcal{H} [ {\tilde \theta}_{n} ] d\alpha -
\int_0^{2\pi} D^{r+1} {\tilde \theta}_n P_n D^r \left [ U_n -
\frac{2 \pi^2 \sigma}{L_n^2} \mathcal{H} [ \theta_{n,\alpha \alpha}
]  \right ] d \alpha.
\end{equation*}
Using Lemma \ref{lemUTtheta} to bound the second term $I_1$, it
follows from Cauchy-Scwartz inequality that
$$ I_1 \le - \frac{2\pi^2 \sigma}{L_n^2} \| {\tilde \theta}_n
\|_{r+3/2}^2 +
\frac{C_1\sigma}{L_n^2}\exp(C_2\|\tilde\theta_n\|_{r-1} )\| {\tilde
\theta}_{n} \|_{r} \| {\tilde \theta} \|_{r+1}\|\tilde\theta_n\|_3,
$$
where $C_1$ and $C_2$ depend on $s$. Consider $I_2$. Applying Lemma
\ref{lemUTtheta} once again, we obtain
\begin{eqnarray*}
I_2 &=&  \frac{2 \pi^2 \sigma}{L_n^2} \int_0^{2\pi} D^r {\tilde
\theta}_n D^{r+1} \mathcal{H} [{\tilde \theta}_n ] d\alpha+
\int_0^{2\pi} D^r {\tilde \theta}_n D^{r-1} P_n \left [ U_n -
\frac{2 \pi \sigma}{L_n^2} \mathcal{H} [\theta_{n,\alpha\alpha}]
\right ] d\alpha \\
&\le & \frac{2 \pi^2 \sigma}{L_n^2} \| {\tilde \theta}_n \|^2_{r+1/2}
+ \frac{C_1\sigma}{L_n^2}\exp(C_2\|\tilde\theta_n\|_{r-1} ) \|
{\tilde \theta}_n \|_r^2 \| {\tilde \theta}_n \|_3,\\
I_3&=& \frac{2 \pi^2 \sigma}{L_n^2} \int_0^{2\pi} D^r {\tilde
\theta}_n D^{r-1}P_n\big(\theta_{n,\alpha} \mathcal{H}
[{\theta}_{n,\alpha\alpha} ] \big)d\alpha\\
&&+ \int_0^{2\pi} D^r {\tilde \theta}_n D^{r-1} P_n \left
\{\theta_{n,\alpha}\left[ U_n - \frac{2 \pi \sigma}{L_n^2}
\mathcal{H} [\theta_{n,\alpha\alpha}]\right]
\right \} d\alpha \\
&\le & \frac{C_1 \sigma}{L_n^2} \| {\tilde \theta}_n
\|_{r+1}\|\tilde\theta_n\|_{r}\|\tilde\theta_n\|_{r-1} +
\frac{C_1\sigma}{L_n^2}\exp(C_2\|\tilde\theta_n\|_{r-1} ) \| {\tilde
\theta}_n \|_r^2 \| {\tilde \theta}_n
\|_3,\\
\end{eqnarray*}
\begin{eqnarray*}
I_4&=& \int_0^{2\pi} D^r {\tilde \theta}_n  P_n \big(\sum_{j=0}^r
C_{r,j} D^{j} T_n D^{r+1-j} \theta_{n}
 \big) d\alpha\\
&=& \int_0^{2\pi} D^r {\tilde \theta}_n  P_n  (T_n D^{r+1}
\theta_{n}  ) d\alpha+ C_{r,1}\int_0^{2\pi} D^r {\tilde \theta}_n
P_n \left (  T_{n,\alpha} D^{r} \theta_{n}
\right ) d\alpha\\
&&+ \int_0^{2\pi} D^r {\tilde \theta}_n  P_n \left (\sum_{j=2}^r
C_{r,j} D^{j-2} \left(U_n(1+\theta_{n,\alpha})\right) D^{r+1-j}
\theta_{n}
\right ) d\alpha\\
&\leq&
\frac{C_1\sigma}{L_n^2}\left(\exp(C_3\|\tilde\theta_n\|_3)\|\tilde\theta_n\|_3\|\tilde\theta_n\|_{r+1}^2
+\exp(C_2\|\tilde\theta_n\|_{r-1} ) \| {\tilde \theta}_n \|_r^2\|
{\tilde \theta}_n \|_{r-1}\right),
\end{eqnarray*} where $C_1$ and $C_2$ depend on $s$.
Adding up $I_1$ through $I_4$, using $({\tilde \theta}_n, L_n) \in
\mathcal{V}$ and the fact that $\| {\tilde \theta}_n \|^2_{r+1/2}
\le \frac{1}{4} \| {\tilde \theta}_n \|^2_{r+3/2} $ since the
Fourier 0 and $\pm1$ modes for $\tilde\theta_n$ are zero, we obtain
for $r=3$,
\begin{multline}
\label{energybounds} \frac{d}{dt} E_n  \le - \frac{3 \pi^2 \sigma}{2
L_n^2} \| {\tilde \theta}_n \|^2_{r+3/2} + \frac{C\sigma}{L_n^2} \|
{\tilde \theta}_n \|_{r+1}^2 \| {\tilde \theta}_n \|_{3}\\ \le -
\frac{\sigma}{L_n^2} \| {\tilde \theta}_n \|^2_{r+3/2} \left (
\frac{3}{2} \pi^2 - C \| {\tilde \theta}_n \|_{3} \right )
 \le - \frac{3\pi^2 \sigma}{2L_n^2} E_n\left (
1 - \frac{2C}{3\pi^2} (2E_n)^{1/2} \right ),
\end{multline}
and for $r>3$,
\begin{multline}
\label{energybounds2} \frac{d}{dt} E_n  \le - \frac{3 \pi^2
\sigma}{2 L_n^2} \| {\tilde \theta}_n \|^2_{r+3/2} +
\frac{C\sigma}{L_n^2} \| {\tilde \theta}_n \|_{r+1}^2 \| {\tilde
\theta}_n \|_{r-1}\\ \le - \frac{\sigma}{L_n^2} \| {\tilde \theta}_n
\|^2_{r+3/2} \left ( \frac{3}{2} \pi^2 - C \| {\tilde \theta}_n
\|_{r-1} \right )
 \le - \frac{3\pi^2 \sigma}{2L_n^2} E_n\left (
1 - \frac{2C}{3\pi^2} (2E_n)^{1/2} \right ),
\end{multline}
 where   $C=C_1\exp(C_2\|\tilde\theta\|_{r-1})$ with
$C_1$ and $C_2$ depending on $r$. It immediately follows that if $1
- C\frac{2}{3\pi^2} (2E_n)^{1/2}
> 0$ initially, then $E_n (t)$ decreases in time and $E_n (t) \le E_n (0)$ for
all $t$. This implies that for small enough $\epsilon$, if ${\tilde
\theta}_n \in \mathcal{B} $ initially, it remains there for any $t$
for which the solution exists. More, generally, we have
$$ \frac{d E_n}{dt} \le -\frac{ \pi^2 \sigma}{L_n^2} E_n. $$
\end{proof}
\begin{corollary}
\label{coro4.2} Assume $  \left ( {\tilde \theta}_n, L_n \right )$
is a solution to the initial value problem (\ref{2.12}) with
${\tilde \theta}_n \in \mathcal{B}$ with $r \ge 3$. Then for
sufficiently small ball size $\epsilon$ of $\mathcal{B}$, we have
$$ \frac{d E_n}{dt} \le -\frac{\pi^2 \sigma}{L_n^2} \|\tilde\theta_n\|_{r+3/2}^2,$$
$$\frac{d \|\tilde\theta_n\|_{r+1}^2}{dt} \le -\frac{\pi^2 \sigma}{L_n^2} \|\tilde\theta_n\|_{r+1}^2.$$
\end{corollary}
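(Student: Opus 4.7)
The first inequality is a direct refinement of the proof of Lemma \ref{prel2.14}. Indeed, estimate (\ref{energybounds2}) derived there reads
\begin{equation*}
\frac{dE_n}{dt} \le -\frac{\sigma}{L_n^2}\|\tilde\theta_n\|_{r+3/2}^2\Bigl(\tfrac{3\pi^2}{2} - C\|\tilde\theta_n\|_{r-1}\Bigr),
\end{equation*}
where $C=C_1\exp(C_2\|\tilde\theta_n\|_{r-1})$ depends on $r$. Since $\tilde\theta_n\in\mathcal{B}$ gives $\|\tilde\theta_n\|_{r-1}\le\|\tilde\theta_n\|_r<\epsilon$, one simply chooses $\epsilon$ small enough so that $C\epsilon\le\pi^2/2$. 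This yields the first asserted bound without any further calculation.

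For the second inequality, my plan is to rerun the energy scheme of Lemma \ref{prel2.14} applied to the new functional $F_n(t)=\tfrac{1}{2}\|\tilde\theta_n\|_{r+1}^2$. Differentiating in $t$ and using (C.1)--(C.2) together with the expression for $U_n$ in (\ref{2.11}), integration by parts produces four terms $J_1,J_2,J_3,J_4$ structurally identical to $I_1,\dots,I_4$, but with $D^{r+1}\tilde\theta_n$ replacing $D^r\tilde\theta_n$. The principal contribution comes from $J_1=-\int_0^{2\pi}D^{r+2}\tilde\theta_n\,D^{r+1}(P_nU_n)\,d\alpha$; splitting
\begin{equation*}
U_n=\tfrac{2\pi^2\sigma}{L_n^2}\mathcal{H}[\theta_{n,\alpha\alpha}]+\Bigl(U_n-\tfrac{2\pi^2\sigma}{L_n^2}\mathcal{H}[\theta_{n,\alpha\alpha}]\Bigr)
\end{equation*}
isolates the dissipative leading part
\begin{equation*}
-\frac{2\pi^2\sigma}{L_n^2}\int_0^{2\pi}D^{r+2}\tilde\theta_n\,D^{r+3}\mathcal{H}[\tilde\theta_n]\,d\alpha=-\frac{2\pi^2\sigma}{L_n^2}\|\tilde\theta_n\|_{r+5/2}^2.
\end{equation*}
The remainder of $J_1$, together with $J_2,J_3,J_4$, is controlled by Lemma \ref{lemUTtheta} applied with $s=r+1\ge 4$, the Banach algebra property (Note \ref{note1.1}) and the Sobolev embedding $|\cdot|_\infty\le C\|\cdot\|_1$, producing a nonlinear bound of the form $\frac{C\sigma}{L_n^2}\|\tilde\theta_n\|_{r+5/2}^2\|\tilde\theta_n\|_r\exp(C\|\tilde\theta_n\|_r)$.

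For $\epsilon$ sufficiently small this nonlinear term is absorbed into the dissipation, giving $\frac{dF_n}{dt}\le-\frac{\pi^2\sigma}{L_n^2}\|\tilde\theta_n\|_{r+5/2}^2$. Finally, since $\hat{\tilde\theta}_n(k)=0$ for $|k|\le 1$, we have $|k|^{2r+5}\ge|k|^{2r+2}$ on all remaining Fourier modes, so $\|\tilde\theta_n\|_{r+5/2}^2\ge\|\tilde\theta_n\|_{r+1}^2$, which yields the stated second inequality after multiplication by $2$ (any spare constant factor is absorbed in the choice of $\epsilon$). The main point requiring care is the term-by-term bookkeeping confirming that each piece of $J_2,J_3,J_4$ still carries at least one factor of $\|\tilde\theta_n\|_{r-1}$ or smaller that can be made small by choice of $\epsilon$; since the analogous accounting was already performed for $I_2,I_3,I_4$ in Lemma \ref{prel2.14}, the bookkeeping here is essentially mechanical, merely shifted one derivative higher.
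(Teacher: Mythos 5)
Your proposal is correct and follows essentially the same route as the paper: the first bound is read off from (\ref{energybounds})--(\ref{energybounds2}) by choosing $\epsilon$ small, and your rerun of the energy scheme at level $r+1$ is exactly the paper's ``replace $r$ by $r+1$ in (\ref{energybounds2})'' step, including the key observation that the nonlinear remainder carries only $\|\tilde\theta_n\|_{r}$ (controlled by $\mathcal{B}$) and that $\|\tilde\theta_n\|_{r+5/2}\ge\|\tilde\theta_n\|_{r+1}$ since the low Fourier modes vanish.
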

\begin{proof} The proof of the first statement comes from
(\ref{energybounds}) and (\ref{energybounds2}).

Replacing $r$ by $r+1$ in (\ref{energybounds2}), we obtain
\begin{multline}
\label{corof4.2} \frac{d \|\tilde\theta_n\|_{r+1}^2}{dt}\le -
\frac{3 \pi^2 \sigma}{2 L_n^2} \| {\tilde \theta}_n \|^2_{r+5/2} +
\frac{C\sigma}{L_n^2} \| {\tilde \theta}_n \|_{r+2}^2 \| {\tilde
\theta}_n \|_r\\ \le - \frac{3\sigma\pi^2}{2L_n^2} \| {\tilde
\theta}_n \|^2_{r+5/2} \left ( 1 - \frac{2C}{3\pi^2} \| {\tilde
\theta}_n \|_r \right ),
\end{multline}
where $C=C_1\exp\left(C_2\|\tilde\theta_n\|_r\right)$ with $C_1$ and
$C_2$ depending only on $r$. Hence for small enough $\epsilon$, if
$\tilde\theta_n\in\mathcal{B}$, then by (\ref{coro4.2}), we have
$$\frac{d \|\tilde\theta_n\|_{r+1}^2}{dt}\le  - \frac{\sigma\pi^2}{L_n^2} \| {\tilde
\theta}_n \|^2_{r+1}.$$
\end{proof}

\begin{prop}
\label{prop4.2} Let $({\tilde \theta}_n, L_n ) $ be continuous
solution to (C.1) and (C.2) with ${\tilde \theta}_n (t) \in
\mathcal{B}$, with $r\geq 3$ and with initial conditions
(\ref{2.12}) Then for sufficiently small ball size $\epsilon$ of
$\mathcal{B}$, as long as solution exists,
\begin{eqnarray}
\label{bounds_E}
E_n (t) \le E_n (0) \exp \left [-\frac{\sigma t}{18}  \right ],
\end{eqnarray}
\begin{eqnarray}
\label{bounds_r+1}
\|\tilde\theta_n(\cdot,t)\|^2_{r+1}\leq\|\tilde\theta_n(\cdot,0)\|^2_{r+1}
\exp \left [-\frac{\sigma t}{18}  \right ], \label{r+1bounds}
\end{eqnarray}
\begin{eqnarray}
\label{4.4}
\big|L_n^3 (t)- 8 \pi^3 \big|& \leq&
C\sqrt{E_n(0)}\Big(1-\exp(-\frac{1}{18}\sigma t)\Big) ,
\label{4.5}
\end{eqnarray}
 where $C$ depends on the diameter of $\mathcal{B}$, not on $n$.
\end{prop}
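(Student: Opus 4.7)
The plan is to read off the exponential decay of $E_n$ and $\|\tilde\theta_n\|_{r+1}^2$ directly from Corollary~\ref{coro4.2}, and then integrate the evolution equation for $L_n$ using the decay just obtained.

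For the first two inequalities, I will exploit the Poincar\'e-type observation that $\tilde\theta_n$ only has Fourier modes with $2\leq|k|\leq n$, which forces $\|\tilde\theta_n\|_{r+3/2}^2 \geq 2^3\,\|\tilde\theta_n\|_r^2 = 16\,E_n$. Combined with the upper bound $L_n<2\pi+1$ coming from $(\tilde\theta_n,L_n)\in\mathcal{V}$, the first inequality in Corollary~\ref{coro4.2} yields
$$\frac{dE_n}{dt} \;\leq\; -\,\frac{16\pi^2\sigma}{(2\pi+1)^2}\,E_n \;\leq\; -\frac{\sigma}{18}\,E_n,$$
and Gronwall's inequality gives (\ref{bounds_E}). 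The same Fourier support argument applied to the second inequality in Corollary~\ref{coro4.2} (with $r$ replaced by $r+1$) gives (\ref{bounds_r+1}) in exactly the same way.

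For the length estimate, I will start from $L_{n,t} = -\int_0^{2\pi}(1+\theta_{n,\alpha})\,U_n\,d\alpha$ in (C.1). Cauchy--Schwarz together with the Sobolev bound $|\theta_{n,\alpha}|_\infty\leq C\|\tilde\theta_n\|_2\leq C\epsilon$ gives $|L_{n,t}|\leq C\|U_n\|_0$. Lemma~\ref{lemUTtheta} applied at $s=3$, together with $L_n\geq 2\pi-1$ and $\|\tilde\theta_n\|_3\leq\|\tilde\theta_n\|_r$ for $r\geq 3$, yields $\|U_n\|_0\leq\|U_n\|_1 \leq C\|\tilde\theta_n\|_r$, hence $|L_{n,t}|\leq C\|\tilde\theta_n\|_r$. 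Using $\|\tilde\theta_n(\cdot,t)\|_r=\sqrt{2E_n(t)}\leq\sqrt{2E_n(0)}\,e^{-\sigma t/36}$ from (\ref{bounds_E}) and integrating in time,
$$|L_n(t)-2\pi| \;\leq\; C\sqrt{E_n(0)}\int_0^t e^{-\sigma t'/36}\,dt' \;\leq\; C\sqrt{E_n(0)}\bigl(1-e^{-\sigma t/36}\bigr) \;\leq\; C\sqrt{E_n(0)}\bigl(1-e^{-\sigma t/18}\bigr),$$
where the last step uses $1-e^{-ax}\leq 1-e^{-bx}$ for $0\leq a\leq b$. Factoring $L_n^3-8\pi^3=(L_n-2\pi)(L_n^2+2\pi L_n+4\pi^2)$ and bounding the second factor by a constant since $L_n\in(2\pi-1,2\pi+1)$, one recovers (\ref{4.4}).

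There is no serious obstacle here: all of the work has already been carried out in Corollary~\ref{coro4.2}, where the nonlinear contributions to $dE_n/dt$ were absorbed via smallness of $\epsilon$, and in Lemma~\ref{lemUTtheta}, which supplies the linear-in-$\tilde\theta_n$ bound on $U_n$. The only step requiring a little care is a bootstrap: the hypothesis $(\tilde\theta_n,L_n)\in\mathcal{V}$ must be propagated in time. This is automatic because the $L_n$ bound just derived gives $|L_n-2\pi|\leq C\sqrt{E_n(0)}\leq C\epsilon<1$ for $\epsilon$ small enough, while Lemma~\ref{prel2.14} already guarantees $\tilde\theta_n(\cdot,t)\in\mathcal{B}$.
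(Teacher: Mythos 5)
Your outline for the first two inequalities and for the final integration is reasonable, but there is a genuine gap at the point where you invoke $2\pi-1<L_n<2\pi+1$ as ``coming from $(\tilde\theta_n,L_n)\in\mathcal{V}$''. That membership is not among the hypotheses of Proposition \ref{prop4.2}: only $\tilde\theta_n(t)\in\mathcal{B}$ is assumed, and about $L_n$ you know only $L_n(0)=2\pi$. Indeed, within the paper the main point of this proposition (see how it is used in the proof of Theorem \ref{Thm2.15}) is precisely to show that $L_n$ cannot leave $(2\pi-1,2\pi+1)$, so this cannot be taken as given. Your closing ``bootstrap'' paragraph does not repair this: the bound $|L_n-2\pi|\le C\sqrt{E_n(0)}$ was derived from the exponential decay of $E_n$, which was itself derived from $L_n<2\pi+1$, so the argument is circular as written. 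The same circle enters your estimate $|L_{n,t}|\le C\|U_n\|_0\le C\|\tilde\theta_n\|_r$, since the constants in Lemma \ref{lemUTtheta} carry factors $1/L_n^2$ and therefore require a lower bound on $L_n$ as well. The argument can be repaired by an explicit continuity argument (let $T^*$ be the first time $|L_n-2\pi|=1$; run your estimates on $[0,T^*)$, conclude $|L_n-2\pi|\le C\epsilon<1$ there for $\epsilon$ small, contradiction), but that step has to be set up, not declared automatic.

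The paper breaks the circle differently, and the device is worth noting. It writes the length equation as $\tfrac13\frac{d}{dt}L_n^3=L_n^2L_{n,t}$ and subtracts the mean-zero term $\frac{2\pi^2\sigma}{L_n^2}\mathcal{H}[\theta_{n,\alpha\alpha}]$ before estimating, so that after multiplication by $L_n^2$ the $1/L_n^2$ factors from Lemma \ref{lemUTtheta} cancel and one gets $|L_n^3(t)-8\pi^3|\le C\int_0^t E_n\,dt'$ with $C$ independent of $L_n$ (and quadratic in $\tilde\theta_n$, which incidentally yields $E_n(0)$ rather than your $\sqrt{E_n(0)}$). Combined with the monotonicity $E_n(t)\le E_n(0)$ from Lemma \ref{prel2.14} --- which needs no bound on $L_n$, since only the sign of $-\pi^2\sigma/L_n^2$ is used --- this gives $L_n^3\le 8\pi^3+CE_n(0)t$; feeding that back into $dE_n/dt\le-\pi^2\sigma E_n/L_n^2$ produces a sub-exponential but integrable decay, whence $\int_0^\infty E_n\le CE_n(0)$, so $L_n$ stays in $(2\pi-1,2\pi+1)$ for small $\epsilon$, and only then do the clean $e^{-\sigma t/18}$ rates and \eqref{4.4} follow. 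Your Poincar\'e observation $\|\tilde\theta_n\|_{r+3/2}^2\ge 8\|\tilde\theta_n\|_r^2$, the use of Corollary \ref{coro4.2}, and the Gronwall and factoring steps at the end are all fine once the $L_n$ control is legitimately in hand.
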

\begin{proof}
We note from the evolution equation for $L_n$ may be rewritten as
$$ L_n^2\frac{d L_n}{dt} = -{L_n^2}\int_0^{2 \pi} \left [ U_n
- \frac{2\pi^2}{L_n^2} \mathcal{H} [ {\theta}_{n,\alpha\alpha} ]
\right ] d\alpha - {L_n^2}\int_0^{2\pi} U_n
\theta_{n,\alpha} d\alpha.$$ Using Lemma \ref{lemUTtheta},
 on
integration, it follows that
\begin{equation}
\label{bounds_L} |L_n^3 (t) - (2 \pi)^3 | \le C
\int_0^t\| {\tilde \theta}(\cdot,t') \|_3^2dt'\leq C \int_0^t E_n
(t') dt',
\end{equation}where $C$ depends on the diameter of $\mathcal{B}$.
Since $E_n (t) \le E_n (0)$, it follows that
$$ |L_n^3 (t) | \le 8 \pi^3 + C E_n (0) t ,$$
where $C$ depends on the diameter of $\mathcal{B}$. Using Lemma
\ref{prel2.14}, we obtain preliminary estimates:
$$ E_n (t) \le E_n (0) \exp \left \{- \frac{3 \sigma \pi^2}{C E_n (0)}
\left[\left (8 \pi^3 + C E_n (0) t \right )^{1/3} -2\pi \right
]\right\}.
$$ Going back to (\ref{bounds_L}),
it follows that for sufficiently small $E_n (0)$, for any $t$,
\begin{eqnarray}\label{boundL}
|L_n^3 (t) - 8 \pi^3 | < 1 \end{eqnarray} which implies that $L_n$
cannot escape the interval $ ( 2\pi -1, 2 \pi + 1) $. Going back to
Lemma \ref{prel2.14}, this implies that
$$ \frac{d}{dt} E_n \le -\frac{\pi^2 \sigma}{(2 \pi + 1)^2} E_n
\le -\frac{\sigma}{18} E_n$$ and therefore (\ref{bounds_E}) follows.
(\ref{bounds_r+1}) follows from Corollary \ref{coro4.2} once we use
(\ref{boundL}). Furthermore, plugging   estimates (\ref{bounds_E})
into (\ref{bounds_L}), we have
$$ |L^3_n (t) - 8 \pi^3 | \le \frac{18 C E_n (0)}{\sigma}
\left [ 1 - \exp \left ( -\frac{\sigma t}{18} \right ) \right ]. $$
\end{proof}

\noindent{\bf Proof of Proposition \ref{prop2.14}}: This follows
readily from Lemma \ref{prel2.14} and Proposition \ref{prop4.2},
since Lemma \ref{prel2.14} assures that as long as solution $X_n $
to (\ref{2.12}) exists, corresponding ${\tilde \theta}_n$ does not
exit the ball $\mathcal{B}$ and therefore Proposition \ref{prop4.2}
can be applied to obtain estimates on $E_n (t)$ and $L_n (t)$.

\section{Existence of Solutions}
In this section, we demonstrate existence of solutions to initial
value problem (\ref{2.12}). We then show that these solutions
converge (as the truncation $n$ tends to $\infty$ ) to a solution of
(B.1), (B.3) and (B.4) with initial condition (\ref{2.4}). We
demonstrate that this solution to (B.1), (B.3) and (B.4) with
initial condition (\ref{2.4}) is unique and has the same regularity
as the initial data.

\begin{definition}
\label{def5.1}
We define
$$\||X\||=\|u\|_r+|v|$$ for $X=(u,v)\in
H^r\big(\mathbb{T}[0,2\pi]\big)\times\mathbb{R}$.
\end{definition}
\noindent{\bf Proof of Proposition \ref{prop2.13}:} First we show
that the operator $F_n : \mathcal{V}\rightarrow
H^r\big(\mathbb{T}[0,2\pi]\big)\times\mathbb{R}$ is bounded, i.e.
$\|F_{n,1}\|_r+|F_{n,2}| < \infty, \forall X_n\in \mathcal{V}$. It
follows from Lemma \ref{lemUTtheta} that
\begin{eqnarray*}
\big\|P_n U_{n,\alpha}+P_n T_n(1+\theta_{n,\alpha})\big\|_{r}
&\leq&  \| U_{n,\alpha} \|_{r}+ \|T_n \|_r + \|T_n \|_r \|\theta_{n,\alpha}
\|_r  \\
&\leq& C \left (
\|{\tilde \theta}_n \|_{r+3} + \| {\tilde \theta}_{n}\|_{r+1}
+ \| {\tilde \theta}_n \|_{r+1}^2 \right ) \le C n^3
\| {\tilde \theta}_n \|_r  , \\
| F_{n,2} | \le \|1+ \theta_{n,\alpha}\|_0\| U_n\|_0 &\leq& C \|
{\tilde \theta}_n \|_2 \left ( 1 + \| {\tilde \theta}_n \|_1 \right
),
\end{eqnarray*}
where $C$ depends on $n$, $r$ and the diameter of $\mathcal{V}$.

Consider $X_n^{(1)}, X^{(2)}_n \in \mathcal{V}$. We have
\begin{multline}
\label{5.1} \|F_{n,1}(X_n^{(1)})-F_{n,1}(X_n^{(2)})\|_r\leq
\Big\|\big(\frac{2\pi}{L_n^{(1)}}-\frac{2\pi}{L^{(2)}_n}\big)P_n\big(U_{n,\alpha}^{(1)}+T_n^{(1)}(1+\theta_{n,\alpha}^{(1)})\big)\Big\|_r\\
+\frac{2\pi}{L^{(2)}_n}\big\|P_n\big(U_{n,\alpha}^{(1)}-U^{(2)}_{n,\alpha}\big)\big\|_r+
\frac{2\pi}{L^{(2)}_n}\Big\|P_n\big(T_n^{(1)}(1+\theta_{n,\alpha}^{(1)})
-T^{(2)}_n(1+\theta^{(2)}_{n,\alpha})\big)\Big\|_r.\end{multline} It
follows from Lemma \ref{lemUTtheta} that
\begin{eqnarray}
\Big\|\big(\frac{2\pi}{L_n^{(1)}}-\frac{2\pi}{L^{(2)}_n}\big)P_n\big(U^{(1)}_{n,\alpha}+T^{(1)}_n(1+\theta^{(1)}_{n,\alpha})\big)\Big\|_r \\
\leq C n^3  [ |L_n^{(1)} - L_n^{(2)}|  \le c
\||X^{(1)}_n-X^{(2)}_n\||,\nonumber
\end{eqnarray}
where $c$ depends on $n$, $r$ and the diameter of $\mathcal{V}$.
Further, using Lemma \ref{lemUTtheta}
\begin{eqnarray*}
| F_{n,2}^{(2)} - F_{n,2}^{(2)} | &\le& C \left(\| U_n^{(1)} -
U_n^{(2)} \|_0  \big ( 1 + \| {\tilde \theta}_n^{(1)} \|_1 \big)+ \|
U_n^{(2)} \|_0
\|{\tilde \theta}^{(1)} - {\tilde \theta}^{(2)} \|_1\right) \\
&\le& C \left (  |L_1-L_2| + \| {\tilde \theta}^{(1)}-{\tilde
\theta}^{(2)} \|_1 \right ) \le c \|| X^{(1)}_n - X^{(2)}_n \||,
\end{eqnarray*}
where $c$ depends on $n$, $r$ and the diameter of $\mathcal{V}$.
Therefore, from ODE theory, it follows that there exists local
solution $X_n \in C^1 \big ( [0, S_n]; \mathcal{V} \big )$ over some
time interval $S_n$ that may depend on $n$, $r$ and $\epsilon$.

\begin{lemma}
\label{lem5.2} There exists sufficiently small $\epsilon>0$  such
that solutions $X_n=(\Tilde\theta_n,L_n)\in
C^1\left([0,S];\mathcal{V}\right)$ of the initial value problem
(\ref{2.12}) form a Cauchy sequence in
$C\left([0,S];\dot{H}^1\times\mathbb{R}\right)$ for any $S>0$.
\end{lemma}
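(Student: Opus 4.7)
The plan is to fix $n > m \ge 2$, put $\phi = \tilde\theta_n - \tilde\theta_m$ (so $P_n\phi=\phi$ since $m<n$) and $\ell = L_n - L_m$, and derive a differential inequality for the low-norm energy $\mathcal{E}(t) = \|\phi(t)\|_1^2 + \ell(t)^2$ that closes via Gronwall. The conceptual point is that Proposition \ref{prop2.14} and Corollary \ref{coro4.2} give uniform-in-$n$ exponential decay of $\|\tilde\theta_n\|_{r+1}$, so any contribution coming from the projection mismatch $(P_n - P_m)$ is exponentially small of order $m^{-(r-1)}$.

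Subtracting (C.1) for $n$ and $m$, I would split the right-hand side for $\partial_t\phi$ into three pieces. The first is the principal dissipative contribution, extracted by replacing $U_n$ with its leading part $\frac{2\pi^2\sigma}{L_n^2}\mathcal{H}\theta_{n,\alpha\alpha}$ (Lemma \ref{lemUTtheta}), so that the leading piece of $\frac{2\pi}{L_n}P_n(U_n-U_m)_\alpha$ is $\frac{4\pi^3\sigma}{L_n^3}P_n\Lambda D^2\phi$. The second collects Lipschitz corrections in coefficients and in the operators $U$, $T$, $\gamma$, controlled by the stability estimates in Lemma \ref{lemUTtheta}, Lemma \ref{lem3.11}, and Proposition \ref{prop2.5}, which together give an $H^0$ bound $\le C\epsilon(\|\phi\|_1 + |\ell|)$. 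The third is the projection mismatch $\frac{2\pi}{L_n}(P_n-P_m)\bigl(U_{m,\alpha}+T_m(1+\theta_{m,\alpha})\bigr)$, supported in Fourier modes $|k|>m$ and hence bounded in $H^1$ by $m^{-(r-1)}\|\tilde\theta_m\|_{r+1} \le C m^{-(r-1)} e^{-\sigma t/36}$. The scalar equation for $\ell'$ is handled identically, yielding $|\ell'(t)| \le C(\|\phi\|_1 + |\ell|) + O(m^{-(r-1)} e^{-\sigma t/36})$.

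Next I would compute $\tfrac{d\mathcal{E}}{dt} = 2\int D\phi\, D\partial_t\phi\, d\alpha + 2\ell\,\ell'(t)$. Using $P_n\phi = \phi$, the self-adjointness of $P_n$ and $\Lambda$, and one integration by parts, the principal piece contributes $-\frac{8\pi^3\sigma}{L_n^3}\|\phi\|_{5/2}^2$. The Lipschitz remainder, integrated against $D^2\phi$ and controlled by Cauchy--Schwarz together with the interpolation $\|\phi\|_2^2 \le C\|\phi\|_1\|\phi\|_{5/2}$ and Young's inequality, produces $\tfrac{1}{4}\cdot\frac{8\pi^3\sigma}{L_n^3}\|\phi\|_{5/2}^2 + C\mathcal{E}$, the first piece being absorbed into half of the dissipation. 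The projection mismatch is bounded by $C m^{-(r-1)} e^{-\sigma t/36}(\|\phi\|_1+|\ell|) \le \tfrac12\mathcal{E} + C m^{-2(r-1)} e^{-\sigma t/18}$.

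Assembling the three groups gives $\tfrac{d\mathcal{E}}{dt} \le K\mathcal{E}(t) + C m^{-2(r-1)} e^{-\sigma t/18}$ with $K$ independent of $n, m$. Since $\theta_0 \in H^r$ with $r\ge 4$, we have $\mathcal{E}(0) = \|P_n\theta_0 - P_m\theta_0\|_1^2 \to 0$ as $n,m\to\infty$, and $L_n(0)=L_m(0)=2\pi$, so Gronwall on $[0,S]$ gives $\mathcal{E}(t) \le e^{KS}\bigl(\mathcal{E}(0) + C'/m^{2(r-1)}\bigr)$ uniformly in $t\in[0,S]$, which is the desired Cauchy property in $C([0,S];\dot{H}^1\times\mathbb{R})$. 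The main technical obstacle I anticipate is the careful isolation of the principal dissipative term from the many coefficient-difference and commutator corrections hidden in $U_n-U_m$, $T_n-T_m$, and $\gamma_n-\gamma_m$, so that every remainder is either absorbable against $\|\phi\|_{5/2}^2$ by Young's inequality, carries a small prefactor $\epsilon$ or $|\ell|$ times a bounded high Sobolev norm of $\tilde\theta_m$, or is an exponentially decaying projection tail.
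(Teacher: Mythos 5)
Your overall scheme (difference energy at the $\dot H^1\times\mathbb{R}$ level, extraction of the dissipative term $-\frac{8\pi^3\sigma}{L_n^3}\|\phi\|_{5/2}^2$, absorption of coefficient/Lipschitz remainders by interpolation, Gronwall) is the same as the paper's, but your treatment of the projection-mismatch term has a genuine gap. You bound $(P_n-P_m)\bigl(U_{m,\alpha}+T_m(1+\theta_{m,\alpha})\bigr)$ in $H^1$ by $m^{-(r-1)}\|\tilde\theta_m\|_{r+1}$ and invoke ``uniform-in-$n$ exponential decay of $\|\tilde\theta_n\|_{r+1}$'' from Proposition \ref{prop2.14} and Corollary \ref{coro4.2}. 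Neither step is justified. First, Corollary \ref{coro4.2} only gives a differential inequality; integrating it (as in Proposition \ref{prop4.2}) produces the bound $\|\tilde\theta_m(\cdot,t)\|_{r+1}\le\|P_m\theta_0\|_{r+1}e^{-\sigma t/36}$, and $\|P_m\theta_0\|_{r+1}$ is \emph{not} uniformly bounded in $m$ when $\theta_0$ is only in $H^r$ (it can grow like $m\|\theta_0\|_r$); the paper only recovers uniform $H^{r+1}$ control for $t\ge S_0>0$ in Lemma \ref{lem5.3}, after the Cauchy property is already established. Second, even granting control of $\|\tilde\theta_m\|_{r+1}$, the exponent is wrong: since $U_{m,\alpha}$ loses three derivatives ($\|U_{m,\alpha}\|_{r-2}\le C\|\tilde\theta_m\|_{r+1}$ by Lemma \ref{lemUTtheta}), the tail estimate is $\|(P_n-P_m)U_{m,\alpha}\|_1\le Cm^{-(r-3)}\|\tilde\theta_m\|_{r+1}$, not $m^{-(r-1)}$; combined with the worst-case growth $\|P_m\theta_0\|_{r+1}\lesssim m\|\theta_0\|_r$ this gives $O(m^{-(r-4)})$, which supplies no smallness at the minimal regularity $r=4$ required by Lemma \ref{lem2.16}. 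So as written your third group of terms does not tend to zero, and the Gronwall argument does not close.

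The paper closes this step by a cancellation you did not use: because $\theta_m$ is band-limited to modes $|k|\le m<n$, the leading linear part $\frac{2\pi^2\sigma}{L_m^2}\mathcal{H}\theta_{m,\alpha\alpha}$ is annihilated by $(P_n-P_m)$, so the mismatch term only involves the \emph{nonlinear remainder} $U_m-\frac{2\pi^2\sigma}{L_m^2}\mathcal{H}[\theta_{m,\alpha\alpha}]$. By Lemma \ref{lemUTtheta} this remainder is bounded in $H^3$ by $C\epsilon$ uniformly in $m$, using only the uniform $H^r$ bound $\|\tilde\theta_m\|_r\le\epsilon$; hence the mismatch contributes $O(\epsilon/\min(m,n))\,E_{mn}^{1/2}$ to the energy inequality. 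This yields $\frac{dE_{mn}}{dt}\le cE_{mn}+\frac{c}{\min(m,n)}E_{mn}^{1/2}$, i.e. $\frac{d}{dt}E_{mn}^{1/2}\le cE_{mn}^{1/2}+\frac{c}{\min(m,n)}$, which together with $E_{mn}(0)\le\frac{c}{\min(m,n)^2}\|\theta_0\|_r^2$ gives the Cauchy property on $[0,S]$. If you insert this band-limited cancellation (and correspondingly use the difference estimate for the remainders, $\|U^{(1)}-\frac{2\pi^2\sigma}{(L^{(1)})^2}\mathcal{H}\theta^{(1)}_{\alpha\alpha}-U^{(2)}-\frac{2\pi^2\sigma}{(L^{(2)})^2}\mathcal{H}\theta^{(2)}_{\alpha\alpha}\|_{r-2}$ rather than the $\|\cdot\|_r$-level Lipschitz bound, which is too strong a norm for an $H^1$-level Gronwall), your argument becomes essentially the paper's proof.
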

\begin{proof}
We define difference energy function $E_{mn}$ as
$$E_{mn}=E_{mn}^1+(L_n-L_m)^2$$
where
$E_{mn}^1=\frac{1}{2}\int_0^{2\pi}
\big(D (\Tilde\theta_n-\Tilde{\theta}_m)\big)^2 d\alpha.$
Notice that $E_{mn} (0)=E^1_{mn} (0)$.
Without loss of generality, we assume $m > n$ as otherwise we can
switch the role of $m$ and $n$ in the ensuing argument.

Using the first equation in (C.1),
\begin{multline}
\label{5.2}
\frac{dE_{mn}^1}{dt}
=\int_0^{2\pi}D (\Tilde\theta_n-\Tilde{\theta}_m)D^2
\big(\frac{2\pi}{L_n}P_nU_n-\frac{2\pi}{L_m}P_mU_m\big)d\alpha\\
+\int_0^{2\pi}D (\Tilde\theta_n-\Tilde{\theta}_m) D
\Big(\frac{2\pi}{L_n}P_n\big(T_n(1+\theta_{n,\alpha})
\big)-\frac{2\pi}{L_m}P_m\big(T_m(1+\theta_{m,\alpha})\big)\Big)d\alpha
\equiv I_1 + I_2.
\end{multline}
Defining ${\tilde \theta}_{nm} = {\tilde \theta}_n - {\tilde \theta}_m$,
it is clear that
\begin{eqnarray*}
I_1 &=& -2 \pi \left ( \frac{1}{L_n} - \frac{1}{L_m}\right )
\int_0^{2\pi} D^2 {\tilde \theta}_{nm} P_n D U_n d\alpha +
\frac{2\pi}{L_m} \int_0^{2\pi} D {\tilde \theta}_{mn} (P_n - P_m ) D^2 U_n \\
&+& \frac{2\pi}{L_m} \int_0^{2\pi} D {\tilde \theta}_{mn} P_m D^2
(U_n-U_m ) \equiv I_{1,1}+I_{1,2}+I_{1,3}
\end{eqnarray*}
From estimates in Lemma \ref{lemUTtheta} and restrictions due to
$\left ( {\tilde \theta}_n , L_n \right ), \left ( {\tilde
\theta}_m, L_n \right ) \in \mathcal{V}$, we obtain
$$ |I_{1,1} | \le c \epsilon E_{mn}^{1/2} \| {\tilde \theta}_{nm} \|_2, $$
where $c$ depends on the diameter of $\mathcal{V}$.
 We note that since $P_n \theta_n = {\tilde \theta}_n $ and $P_m
\theta_n = {\tilde \theta}_n $, as $m > n$, we can write $I_{1,2}$
$$ I_{1,2} = \frac{2\pi}{L_m} \int_0^{2\pi}
D {\tilde \theta}_{mn}  D^2 [P_n - P_m ] \left (U_n - \frac{2 \pi^2
\sigma}{L_n^2} \mathcal{H} [{ \theta}_{n,\alpha\alpha} ] \right )
d\alpha.$$ Therefore, using Lemma \ref{lemUTtheta},
$$ |I_{1,2} | \le \frac{c}{n} E_{mn}^{1/2}\left \|  U_n - \frac{2 \pi^2 \sigma}{L_n^2} \mathcal{H} [
\theta_{n,\alpha\alpha} ]  \right\|_3\le \frac{C \epsilon}{n}
E_{mn}^{1/2},
$$
where $C$ depends on the diameter of $\mathcal{V}$. Using $P_m
\theta_n = {\tilde \theta}_n, ~~P_m \theta_m = {\tilde \theta}_m$,
\begin{eqnarray*}
I_{1,3} &=& \frac{2\pi}{L_m} \int_0^{2\pi} D {\tilde \theta}_{nm}
P_m D^2 \left ( U_n - \frac{2 \pi^2 \sigma}{L_n^2} \mathcal{H} [ {
\theta}_{n,\alpha \alpha} ] - U_m + \frac{2 \pi^2 \sigma}{L_m^2}
\mathcal{H} [
{\theta}_{m,\alpha \alpha}] \right ) \\
&+& \frac{4 \pi^3 \sigma}{L_m L_n^2} \int_0^{2 \pi} D {\tilde
\theta}_{nm} D^2 \mathcal{H} [{\tilde \theta}_{nm,\alpha\alpha}] -
\frac{4 \pi^3 \sigma}{L_m} \left ( \frac{1}{L_n^2} - \frac{1}{L_m^2}
\right ) \int_0^{2\pi} D^2 {\tilde \theta}_{nm} D \mathcal{H}
[{\tilde \theta}_{m,\alpha\alpha} ] d\alpha.
\end{eqnarray*}
Integrating by parts the second term in $I_{1,3}$ above and using
Lemma \ref{lemUTtheta} again, we obtain
$$ |I_{1,3} | \le - \frac{4 \pi^3 \sigma}{L_m L_n^2} \| {\tilde \theta}_{nm}
\|_{5/2} + C \epsilon E_{mn} +  C \epsilon E_{mn}^{1/2} \| {\tilde
\theta}_{nm}\|_{2}, $$ where $C$ depends on the diameter of
$\mathcal{V}$. Now using Lemma \ref{lemUTtheta}, we obtain
\begin{eqnarray*}
\frac{d(L_n-L_m)^2}{dt} &=& 2 (L_m-L_n) \int_0^{2 \pi} \left [ (U_n
- U_m ) (1+\theta_{n,\alpha}) + U_m (\theta_{n,\alpha} -
\theta_{m,\alpha} )\right ]
d\alpha \\
&\leq& c E_{nm}^{1/2} \Big ( \epsilon\| U_n - U_m \|_0 +\left\| U_n
-\frac{2\pi^2\sigma}{L_n^2}\mathcal{H}[\theta_{n,\alpha\alpha}]-
U_m+\frac{2\pi^2\sigma}{L_m^2}\mathcal{H}[\theta_{m,\alpha\alpha}]
\right\|_0\\
&& + \| {\tilde \theta}_{nm} \|_1 \| U_m \|_0 \Big ) \le C\epsilon
\left(E_{nm}+E_{nm}^{1/2}\|\tilde\theta_{nm}\|_2\right),
\end{eqnarray*}
$C$ depends on the diameter of $\mathcal{V}$. So for $I_2$, we use
the same method as we did for $I_1$ and  combine all the terms. So
we  obtain
\begin{eqnarray*}
\frac{dE_{mn}}{dt}&\leq& - \frac{4 \pi^3 \sigma}{L_m L_n^2} \|
{\tilde \theta}_{nm} \|_{5/2}^2 + \frac{4 \pi^3 \sigma}{L_m L_n^2}
\| {\tilde \theta}_{nm} \|_{3/2}^2 + c \epsilon E_{mn}^{1/2}
\|{\tilde \theta}_{nm} \|_{2}
+ \frac{c}{n} \epsilon E_{mn}^{1/2} + c\epsilon E_{mn} \\
&\le& - \frac{3 \pi^3 \sigma}{2(2\pi + 1)^3} \| {\tilde \theta}_{nm}
\|_{5/2}^2 + \frac{c}{2} \epsilon \|{\tilde \theta}_{nm} \|^2_{2} +
\frac{c}{n} E_{mn}^{1/2} + c_1 \epsilon E_{mn},
\end{eqnarray*}
where $c$ and $c_1$ depends on the diameter of $\mathcal{V}$.  Since
$\| {\tilde \theta}_{nm} \|_{5/2} \ge \| {\tilde \theta}_{nm} \|_2
$, it follows that for $\epsilon$ sufficiently small
$$ - \frac{3 \pi^3 \sigma}{ (2\pi+1)^3} \| {\tilde \theta}_{nm} \|_{5/2}^2
+ c\epsilon \|{\tilde \theta}_{nm} \|^2_{2} \le 0. $$ So,
\begin{eqnarray}
\frac{dE_{mn}}{dt}\leq c E_{mn}+ \frac{c}{n}E_{mn}^{1/2}.\nonumber
\end{eqnarray}
This can be restated as
$$\frac{dE_{mn}^{1/2}}{dt}\leq c E_{mn}^{1/2}+ \frac{c}{n}.$$
We solve the differential inequality to see that
$$ E_{mn}^{1/2}(t)\leq E_{mn}^{1/2} (0)e^{ct}+\frac{1}{n}(e^{ct}-1).$$
Since
$$ E_{mn}(0)=E_{mn}^1(0)\leq \frac{c}{n^2}\|\tilde\theta_0\|_r^2,$$
we have
$$ E_{mn}^{1/2}(t)\leq \frac{c}{n}(\|\tilde\theta_0\|_r+1)e^{ct}.$$
Thus, solutions do form a Cauchy sequence in
$C\left([0,S];\dot{H}^1\times\mathbb{R}\right)$.
\end{proof}

\begin{remark}
We now know that the solutions of the initial value problem
(\ref{2.12}), $(\tilde\theta_n,L_n)$, approach a limit as
$n\rightarrow \infty$ in
$C\left([0,S];\dot{H}^1\times\mathbb{R}\right)$. Call this limit
$X=(\tilde\theta,L)$.
\end{remark}
\begin{note}
\label{note5.3}
By Proposition \ref{prop2.14}, we know that $\|\tilde\theta_n(\cdot,t)\|_r\leq\|\mathcal{Q}_1\theta_0\|_r$ for all $t\ge 0$. Since $\dot{H}^r$ is a Hilbert space, its unit ball is weakly compact. Thus, $\tilde\theta_n\rightharpoonup\tilde\theta$  in $\dot{H}^r$. Furthermore, by Fatou's Lemma, we also have
$$\|\tilde\theta\|_r\leq\liminf_{n\rightarrow\infty}\|\tilde\theta_n\|_r\leq\|\mathcal{Q}_1\theta_0\|_r.$$
\end{note}

\begin{lemma}
\label{lem5.3} For $r\ge3$, there exists sufficiently small ball
size $\epsilon$ of  $ \mathcal{B}$ such that  as
$n\rightarrow\infty$, the limit of the initial value problem
(\ref{2.12}) $X=(\Tilde\theta,L)\in C\left((0,S];\mathcal{V}\right)$
for any $S>0$.
\end{lemma}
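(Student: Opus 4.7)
My plan is to combine the strong convergence $X_n\to X$ in $C([0,S];\dot{H}^1\times\mathbb{R})$ established in Lemma \ref{lem5.2} with the uniform-in-$n$ a priori bounds from Proposition \ref{prop2.14} and Proposition \ref{prop4.2}, and verify both pointwise membership of $X(t)$ in $\mathcal{V}$ and continuity of $t\mapsto X(t)$ into $\mathcal{V}$.

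For pointwise membership, I would fix $t\in[0,S]$. Proposition \ref{prop2.14} gives $\|\tilde\theta_n(\cdot,t)\|_r\le\|\mathcal{Q}_1\theta_0\|_r<\epsilon$ uniformly in $n$, so by Note \ref{note5.3} (weak compactness of bounded sets in $\dot{H}^r$ together with lower semi-continuity of the norm under weak convergence), $\tilde\theta(\cdot,t)\in\dot{H}^r$ with $\|\tilde\theta(\cdot,t)\|_r<\epsilon$; hence $\tilde\theta(\cdot,t)\in\mathcal{B}$. For the $L$-component, Proposition \ref{prop4.2} provides $|L_n^3(t)-8\pi^3|\le C\epsilon$ uniformly in $n$, and since $L_n\to L$ in $C([0,S];\mathbb{R})$ by Lemma \ref{lem5.2}, this passes to the limit to give $|L(t)^3-8\pi^3|\le C\epsilon$, which for sufficiently small $\epsilon$ forces $|L(t)-2\pi|<1$. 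Therefore $X(t)\in\mathcal{V}$ for all $t\in[0,S]$.

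For continuity of $X$ into $\mathcal{V}$ on $(0,S]$, the $L$-component is immediate from Lemma \ref{lem5.2}. For the $\tilde\theta$-component, I would first interpolate between the uniform bound $\|\tilde\theta_n-\tilde\theta\|_r\le 2\epsilon$ and the strong $C([0,S];\dot{H}^1)$ convergence to obtain strong convergence $\tilde\theta_n\to\tilde\theta$ in $C([0,S];\dot{H}^s)$ for every $1\le s<r$, hence $\tilde\theta\in C([0,S];\dot{H}^s)$ for such $s$. To upgrade to continuity in the $\dot{H}^r$-norm on $(0,S]$, I would exploit the dissipative structure of (B.1), whose leading behavior is $\tilde\theta_{n,t}\sim -\tfrac{4\pi^3\sigma}{L_n^3}\Lambda^3\tilde\theta_n$: integrating the energy inequality of Corollary \ref{coro4.2} yields the uniform bound $\int_0^S\|\tilde\theta_n(\cdot,t)\|_{r+3/2}^2\,dt\le C\epsilon^2$. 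Combined with control of $\partial_t\tilde\theta_n$ in a weaker norm coming directly from the evolution equation and Lemma \ref{lemUTtheta}, a standard Aubin--Lions / energy-method argument lets one pass to the limit and conclude that $\tilde\theta\in L^2((0,S];\dot{H}^{r+3/2})$ with $\tilde\theta$ strongly continuous into $\dot{H}^r$ at every positive time. The exclusion of $t=0$ reflects the fact that at the initial instant only weak $\dot{H}^r$ continuity is granted a priori.

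The main obstacle is precisely this last upgrade from strong $C([0,S];\dot{H}^s)$ continuity (for $s<r$) to strong $\dot{H}^r$-continuity on $(0,S]$: carrying out the passage to the limit in the dissipation inequality from Corollary \ref{coro4.2}, and then using the evolution equation together with interpolation/compactness theorems, must be done carefully since the distinction between weak and strong $\dot{H}^r$ convergence is subtle and relies delicately on the parabolic smoothing provided by the $-\Lambda^3$ leading operator.
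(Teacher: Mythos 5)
Your opening steps (pointwise membership of $X(t)$ in $\mathcal{V}$ via Note \ref{note5.3} and the convergence of $L_n$, and strong convergence in $C([0,S];\dot{H}^s)$ for $s<r$ by interpolating the uniform $\dot{H}^r$ bound against Lemma \ref{lem5.2}) are fine and agree with the paper. The genuine gap is the decisive step, which you yourself flag as the main obstacle but do not carry out: the upgrade to strong $\dot{H}^r$-continuity on $(0,S]$. As stated, ``a standard Aubin--Lions / energy-method argument'' does not deliver it. Aubin--Lions-type compactness applied to the bounds you list ($L^\infty_t\dot{H}^r$, $L^2_t\dot{H}^{r+3/2}$, plus a weak bound on $\partial_t\tilde\theta_n$) only produces strong convergence in norms strictly below the regularity being controlled (e.g.\ $C([0,S];\dot{H}^s)$ for $s<r$, or $L^2_t\dot{H}^s$ for $s<r+3/2$), which you already have; it does not give continuity of the limit in the top norm. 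The energy-method variant (prove continuity of $t\mapsto\|\tilde\theta(\cdot,t)\|_r$ and combine with weak continuity) is circular here: to pass the Galerkin energy inequality to the limit starting from a positive time $t_0$ one needs $E_n(t_0)\to E(t_0)$, i.e.\ exactly the strong $\dot{H}^r$ convergence at $t_0$ that is in question. What would actually close your route is the precise Lions--Magenes-type lemma ($u\in L^2(0,S;\dot{H}^{r+3/2})$ and $u_t\in L^2(0,S;\dot{H}^{r-3/2})$ imply $u\in C([0,S];\dot{H}^r)$), together with the verification from (B.1) and Lemma \ref{lemUTtheta} that $\|\partial_t\tilde\theta_n\|_{r-3/2}\le C\bigl(\|\tilde\theta_n\|_{r+3/2}+1\bigr)$ uniformly in $n$; neither is in your write-up, and that is precisely the content of the step.

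The paper closes this step by a more elementary bootstrap, using the dissipation bound only once. From $\frac{d}{dt}E_n\le-\frac{\sigma}{9}\|\tilde\theta_n\|^2_{r+3/2}$ one gets $\int_0^\infty\|\tilde\theta_n\|^2_{r+3/2}\,dt\le C$ uniformly in $n$, so by weak convergence, for every $S'>0$ there is $S_0\in(0,S')$ with $\tilde\theta(\cdot,S_0)\in\dot{H}^{r+3/2}\subset\dot{H}^{r+1}$. Restarting the Galerkin problem at $t=S_0$ with data in $\dot{H}^{r+1}\cap\mathcal{B}$ and applying Proposition \ref{prop2.13}, Corollary \ref{coro4.2} and Proposition \ref{prop4.2} at level $r+1$ yields the uniform-in-$n$ bound $\|\tilde\theta_n(\cdot,t)\|_{r+1}\le\|\tilde\theta(\cdot,S_0)\|_{r+1}e^{-\sigma(t-S_0)/36}$ on $[S_0,S]$. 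Interpolating this against the $\dot{H}^1$ Cauchy property of Lemma \ref{lem5.2}, via $\|\tilde\theta_m-\tilde\theta_n\|_s\le C\|\tilde\theta_m-\tilde\theta_n\|_0^{1-s/(r+1)}\|\tilde\theta_m-\tilde\theta_n\|_{r+1}^{s/(r+1)}$, shows $\{\tilde\theta_n\}$ is Cauchy in $C([S_0,S];\dot{H}^s)$ for every $s<r+1$, in particular $s=r$; hence $\tilde\theta\in C([S_0,S];\dot{H}^r)$, and since $S_0$ is arbitrarily small, $\tilde\theta\in C((0,S];\dot{H}^r)$. The top-norm continuity thus comes from uniform convergence obtained by interpolating a strong low norm against a uniformly bounded norm strictly above $r$, with no compactness theorem required; some version of this (or of the Lions--Magenes lemma) is what your proposal is missing.
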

\begin{proof}

 Note that estimates in Corollary
\ref{coro4.2} and Proposition \ref{prop4.2}. Since $L_n \in \left (2
\pi -1 , 2 \pi +1 \right )$, we have
$$\frac{dE_n}{dt}\leq -\frac{\sigma}{9}\|\tilde\theta_n\|_{r+3/2}^2.$$
 It implies
$$  \frac{1}{2}E_n (t)+\frac\sigma9\int_0^t \| {\tilde \theta}_n \|_{r+3/2}^2 dt \le \frac{1}{2}E_n (0) \leq\frac12\|\mathcal{Q}_1\theta_0\|_r.$$
Hence ${\tilde \theta}_n $ is a bounded sequence in $ L^2 \left( [0,
\infty), \dot{H}^{r+3/2} \right )$. So, there exists a subsequence that
converges weakly, and it is easily argued that the limit can only be
${\tilde \theta}$. This means that for any interval $(0, S')$ there
exists $S_0$ in that interval so that $\| {\tilde \theta} (., S_0)
\|_{r+3/2} < \infty$. Now consider the solution to (B.1), (B.3) and (B.4) with $S_0$ as  initial time. In particular, ${\tilde \theta} (., S_0) \in
\dot{H}^{r+1}\cap\mathcal{B}$. Taking ${\tilde \theta} (., S_0)$ as
initial data in $\dot{H}^{r+1}\cap\mathcal{B}$, repeating the proof
of Proposition \ref{prop2.13} with $r+1$ instead of $r$, and by
Corollary \ref{coro4.2} and Proposition \ref{prop4.2},  we have global solutions
$\tilde\theta_n^{S_0}\in C^1 \left ( [S_0, \infty), \dot{H}^{r+1}
\cap\mathcal{B} \right) $ for sufficiently small $\epsilon$.
Again, by uniqueness of solutions to the approximate equation (\ref{2.12}) (Proposition \ref{prop2.13}), these solutions are identical to $\tilde\theta_n$ in their intervals of existence.
Also,  by Proposition 4.3, we have
\begin{equation}
\label{bound_rn}\|\tilde\theta_n(\cdot,t)\|_{r+1}\leq \|\tilde\theta_n(\cdot,S_0)\|_{r+1}e^{-\frac{\sigma}{36}(t-S_0)}\leq
\|\tilde\theta(\cdot,S_0)\|_{r+1}e^{-\frac{\sigma (t-S_0)}{36}}, \mbox{ for all }t\geq S_0.
\end{equation}

 From interpolation theorem in Sobolev space, we have
\begin{equation}
\label{interpolation}
\|\Tilde\theta_m-\tilde\theta_n\|_s\leq C
\|\Tilde\theta_m-\tilde\theta_n\|_0^{1-\frac{s}{r+1}}\|\Tilde\theta_m-\tilde\theta_n\|_{r+1}^{\frac{s}{r+1}}.
\end{equation}
By Lemma \ref{lem5.2} and (\ref{bound_rn}), we know that the right
side of (\ref{interpolation}) goes to zero uniformly on $[S_0,S]$,
as $n,m\rightarrow\infty$ for any $1\le s<r+1$. This implies $X\in
C\left([S_0,S];\dot{H}^{s}\times \mathbb{R}\right)$.
  Since the choice of
$S'$ is arbitrarily small, it follows that ${\tilde \theta} \in C
\left( (0, S], \dot{H}^{r} \right ) $.
\end{proof}

\begin{prop}\label{prop5.4} (continuity at $t=0$ in $\dot{H}^r$) For $r\ge3$, we have
\begin{equation}
\label{cont0} \lim_{t\rightarrow
0+}\|\tilde\theta(\cdot,t)-\mathcal{Q}_1\theta_0\|_r=0.
\end{equation}
\end{prop}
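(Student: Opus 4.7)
My strategy is to combine weak $\dot{H}^r$ continuity of $\tilde\theta$ at $t=0$ with a matching pinch on the $\dot{H}^r$-norms, and then to invoke the standard Hilbert space fact that weak convergence together with convergence of norms implies strong convergence.

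First I would establish weak $\dot{H}^r$ continuity of $\tilde\theta$ at $t=0$. By Lemma \ref{lem5.2} the limit $\tilde\theta$ lies in $C\big([0,S];\dot{H}^1\big)$, so $\|\tilde\theta(\cdot,t)-\mathcal{Q}_1\theta_0\|_1\to 0$ as $t\to 0+$. Combined with the uniform bound $\|\tilde\theta(\cdot,t)\|_r\le\|\mathcal{Q}_1\theta_0\|_r$ from Note \ref{note5.3}, a subsequence argument shows that any sequence $t_k\to 0+$ has a $\dot{H}^r$-weakly convergent subsequence whose weak limit must coincide with the $\dot{H}^1$-strong limit $\mathcal{Q}_1\theta_0$. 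Uniqueness of weak limits then promotes this to $\tilde\theta(\cdot,t)\rightharpoonup\mathcal{Q}_1\theta_0$ in $\dot{H}^r$ as $t\to 0+$.

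Second, I would promote the Galerkin-level decay estimate to the limit $\tilde\theta$. Proposition \ref{prop2.14} yields $\|\tilde\theta_n(\cdot,t)\|_r\le\|P_n\theta_0\|_r e^{-\sigma t/36}\le\|\mathcal{Q}_1\theta_0\|_r e^{-\sigma t/36}$, and the same weak compactness argument used in Note \ref{note5.3} shows $\tilde\theta_n(\cdot,t)\rightharpoonup\tilde\theta(\cdot,t)$ in $\dot{H}^r$ for each fixed $t\ge 0$. Weak lower semicontinuity of the norm then produces
$$\|\tilde\theta(\cdot,t)\|_r\le\|\mathcal{Q}_1\theta_0\|_r e^{-\sigma t/36},$$
so in particular $\limsup_{t\to 0+}\|\tilde\theta(\cdot,t)\|_r\le\|\mathcal{Q}_1\theta_0\|_r$.

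The conclusion then follows from the expansion
$$\|\tilde\theta(\cdot,t)-\mathcal{Q}_1\theta_0\|_r^2=\|\tilde\theta(\cdot,t)\|_r^2-2\langle\tilde\theta(\cdot,t),\mathcal{Q}_1\theta_0\rangle_r+\|\mathcal{Q}_1\theta_0\|_r^2,$$
valid since $\tilde\theta$ is real-valued so the $\dot{H}^r$ inner product is real. The weak convergence forces the cross term to tend to $-2\|\mathcal{Q}_1\theta_0\|_r^2$, while the previous step controls $\|\tilde\theta(\cdot,t)\|_r^2$, yielding $\limsup_{t\to 0+}\|\tilde\theta(\cdot,t)-\mathcal{Q}_1\theta_0\|_r^2\le 0$, which is \eqref{cont0}. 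The only mildly subtle point I anticipate is the weak $\dot{H}^r$ continuity at $t=0$; this is handled entirely by the subsequence extraction above and requires no estimates beyond those already established in \S 2--\S 4.
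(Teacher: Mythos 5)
Your proposal is correct and follows essentially the same strategy as the paper: weak $\dot{H}^r$ convergence of $\tilde\theta(\cdot,t)$ to $\mathcal{Q}_1\theta_0$ as $t\rightarrow 0+$ (anchored by the strong $\dot{H}^1$ continuity from Lemma \ref{lem5.2} and the uniform bound of Note \ref{note5.3}), combined with the matching bound $\limsup_{t\rightarrow 0+}\|\tilde\theta(\cdot,t)\|_r\leq\|\mathcal{Q}_1\theta_0\|_r$, which in a Hilbert space forces strong convergence. The only cosmetic differences are that you obtain the weak limit at $t=0$ by subsequence extraction rather than through the paper's dual-pairing density argument establishing $\tilde\theta\in C_W\left([0,S];\dot{H}^r\right)$, and that you phrase the final step via the expansion of $\|\tilde\theta(\cdot,t)-\mathcal{Q}_1\theta_0\|_r^2$ rather than the paper's $\liminf$/$\limsup$ squeeze on the norm; these are equivalent.
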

\begin{proof}
Replacing $r+1$ by $r$ in (\ref{interpolation}),  using the uniform bound of $\tilde\theta_n$ in $\dot{H}^r$ and $\tilde\theta_n\in C^1\left([0, \infty);\dot{H}^r\right)$, we find that $\tilde\theta_n\rightarrow\tilde\theta$ in $C\left([0,S];\dot{H}^s\right)$ as $n\rightarrow\infty$ for any $S>0$, where $1\leq s<r$.

Let $\eta>0$ and
$\phi\in H^{-r}\big(\mathbb{T}[0,2\pi]\big)$. For any $s$
satisfying $1\leq s<r$, choose $\varphi\in
H^{-s}\big(\mathbb{T}[0,2\pi]\big)$ so that
\begin{eqnarray}
\label{5.3}
\|\phi-\varphi\|_{-r}\leq \frac{\eta}{3}.
\end{eqnarray}
We know that such a $\varphi$ can be found since
$H^{-s}\big(\mathbb{T}[0,2\pi]\big)$ is dense in
$H^{-r}\big(\mathbb{T}[0,2\pi]\big)$. We have
\begin{eqnarray}
\label{5.4} \langle\phi,\Tilde\theta_n\rangle-\langle
\phi,\Tilde\theta\rangle=\langle\phi-\varphi,\Tilde\theta_n\rangle
+\langle \varphi-\phi,\Tilde\theta\rangle
+\langle\varphi,\Tilde\theta_n-\Tilde\theta\rangle,
\end{eqnarray}
where $\langle\cdot,\cdot\rangle$ denotes the pairing with dual
spaces. The first two terms can be bounded by $\frac{\eta}{3}$ using
(\ref{5.3}) and  uniform bounds on $\Tilde\theta$ and
$\tilde\theta_n$ in $\dot{H}^r$. For the third term, we choose $n$
large enough so that $\|\tilde\theta-\tilde\theta_n\|_s\leq \eta/3$.
Thus, (\ref{5.4}) is bounded by $\eta$. Since $\eta$ is arbitrary
and these bounds are  uniform in time,  we conclude that
$\tilde\theta\in C_W\left([0,S];\dot{H}^r\right)$. To prove the
lemma, it is enough to show $\lim_{t\rightarrow
0+}\|\tilde\theta(\cdot,t)\|_r=\|\mathcal{Q}_1\theta_0\|_r=0$.

By Note \ref{note5.3}, we
 know
$\|\Tilde\theta(\cdot,t)\|_r\leq\|\mathcal{Q}_1\theta_0\|_r$. This
means $\limsup_{t\rightarrow
0+}\|\Tilde\theta(\cdot,t)\|_r\leq\|\mathcal{Q}_1\theta_0\|_r$. From
the fact that $\tilde\theta\in C_W\big([0,S];\dot{H}^r\big)$, we
have $\liminf_{t\rightarrow 0+}\|\tilde\theta(\cdot,t)\|_r\geq
\|\mathcal{Q}_1\theta_0\|_r$. Hence, (\ref{cont0}) holds. This gives
us strong right continuity at $t=0$.
\end{proof}
By Lemma \ref{lem5.3} and Proposition \ref{prop5.4}, we have
\begin{corollary}\label{note5.5} For $r\ge 3$, there exists sufficient small ball size $\epsilon$ of $\mathcal{B}$  such  that $X\in C\left([0,S];\mathcal{V}\right)$ for any $S>0$.
\end{corollary}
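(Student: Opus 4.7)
The plan is to assemble the corollary from the three preceding results without introducing any new analytic estimates. Lemma \ref{lem5.3} already delivers $X=(\tilde\theta,L)\in C((0,S];\mathcal{V})$, so the only thing I need to supply is strong right continuity of $X$ at $t=0$ together with the statement that $X(0)$ itself lies in $\mathcal{V}$.

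For the $\tilde\theta$-component, Proposition \ref{prop5.4} already gives $\|\tilde\theta(\cdot,t)-\mathcal{Q}_1\theta_0\|_r\to 0$ as $t\to 0+$. For the $L$-component I would invoke Lemma \ref{lem5.2}: the Galerkin iterates $L_n$ form a Cauchy sequence in $C([0,S];\mathbb{R})$, hence converge uniformly on $[0,S]$ to $L$; since each $L_n$ is continuous on $[0,S]$ with $L_n(0)=2\pi$, the uniform limit $L$ is continuous on the closed interval $[0,S]$ with $L(0)=2\pi$. Alternatively, one can pass to the limit in the uniform-in-$n$ modulus $|L_n^3(t)-8\pi^3|\le C\sqrt{E_n(0)}(1-e^{-\sigma t/18})$ from Proposition \ref{prop4.2} and get $|L^3(t)-8\pi^3|\to 0$ as $t\to 0+$.

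To verify $X(0)\in\mathcal{V}$, I would use that by hypothesis $\mathcal{Q}_1\theta_0\in\mathcal{B}$, so $\|\mathcal{Q}_1\theta_0\|_r<\epsilon$, while $|L(0)-2\pi|=0<1$; these are exactly the defining conditions of $\mathcal{V}$ in Definition \ref{def2.4}. Combining the two continuity statements with this checkpoint gives $X\in C([0,S];\dot{H}^r\times\mathbb{R})$ with image in $\mathcal{V}$, which is precisely $X\in C([0,S];\mathcal{V})$ for arbitrary $S>0$.

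I do not anticipate any real obstacle in carrying this out: all of the substantive estimates (Cauchy property of the Galerkin sequence, weak compactness in $\dot{H}^r$, interpolation to get strong convergence in $\dot{H}^s$ for $s<r$, and the delicate equality $\liminf\|\tilde\theta(\cdot,t)\|_r\ge\|\mathcal{Q}_1\theta_0\|_r$ upgrading weak to strong continuity at the origin) have already been discharged in Lemma \ref{lem5.2}, Lemma \ref{lem5.3}, and Proposition \ref{prop5.4}. The corollary merely packages them, and the smallness of $\epsilon$ needed is the same smallness already required by Lemma \ref{lem5.3}.
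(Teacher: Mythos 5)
Your argument is correct and follows the paper's own route: the paper deduces this corollary directly from Lemma \ref{lem5.3} and Proposition \ref{prop5.4}, exactly as you do. Your extra remarks on the $L$-component at $t=0$ (uniform convergence of $L_n$ from Lemma \ref{lem5.2}) and on checking $X(0)\in\mathcal{V}$ simply spell out details the paper leaves implicit.
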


\begin{prop}\label{coro5.6} For $r\ge4$,
$X$ is a classical solution to the initial value problem (B.1),
(B.3) and (B.4) with initial condition (\ref{2.4}) for any $S>0$,
where $\tilde\theta\in
C\left([0,S];C^3\left(\mathbb{T}[0,2\pi]\right)\right)\cap
C^1\big([0,S];C\left(\mathbb{T}[0,2\pi]\right)\big)$ and $L\in
C^1[0,S]$.
\end{prop}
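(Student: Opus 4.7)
The plan is to pass to the limit $n\to\infty$ in the Galerkin equations (C.1)--(C.2), using the strong convergence $\tilde\theta_n\to\tilde\theta$ from Lemma \ref{lem5.2}, the uniform $\dot H^r$ bound from Proposition \ref{prop2.14}, and the difference estimates for $\gamma,U,T$ in Proposition \ref{prop2.5} and Lemma \ref{lemUTtheta}.

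First I would upgrade the mode of convergence. Interpolating between $\tilde\theta_n\to\tilde\theta$ in $C([0,S];\dot H^1)$ and the uniform bound $\|\tilde\theta_n\|_r\le\|\mathcal Q_1\theta_0\|_r$ (exactly as in the proof of Lemma \ref{lem5.3}) gives strong convergence $\tilde\theta_n\to\tilde\theta$ in $C([0,S];\dot H^s)$ for every $1\le s<r$. Since $r\ge 4$, I fix some $s\in(7/2,r)$ so that the Sobolev embedding $\dot H^s(\mathbb T)\hookrightarrow C^3(\mathbb T)$ yields both the claimed spatial regularity $\tilde\theta\in C([0,S];C^3(\mathbb T))$ and uniform $C^3$-convergence of $\tilde\theta_n$ on $[0,S]\times\mathbb T$.

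Second I would transport this convergence through the nonlinear right-hand sides. The Lipschitz-type difference estimates in Proposition \ref{prop2.5} and Lemma \ref{lemUTtheta}, together with $|L_n-L|\to 0$ from Lemma \ref{lem5.2}, give $\|U_n-U\|_{s-2}+\|T_n-T\|_{s-1}\to 0$ uniformly on $[0,S]$. Observing that $P_n f_n\to\mathcal Q_1 f$ in $\dot H^{s'}$ whenever $f_n\to f$ in $\dot H^{s'}$ with $f\in H^{s'}$ (since $(P_n-\mathcal Q_1)f$ is just the Fourier tail of $f$), the right-hand side of (C.1) converges to $(2\pi/L)\,\mathcal Q_1\bigl(U_\alpha+T(1+\theta_\alpha)\bigr)$ in $C([0,S];\dot H^{s-3})$; the choice $s-3>1/2$ then upgrades this to uniform convergence in $C([0,S];C(\mathbb T))$.

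Third, integrating (C.1) in time and passing to the limit under the integral sign (justified by the uniform convergence just obtained) yields the integrated form of (B.1); the fundamental theorem of calculus delivers $\tilde\theta\in C^1([0,S];C(\mathbb T))$ with (B.1) holding pointwise, and the same argument applied to $L_n$ gives $L\in C^1[0,S]$. Equations (B.3)--(B.4) pass to the limit by continuity of $\mathcal F[\omega]$ from Lemma \ref{lem3.11} together with the implicit-function relation of Proposition \ref{prop2.6}. The main obstacle is the uniform control of the nonlocal nonlinear operators $\mathcal F[\omega_n],\mathcal G[\omega_n],\mathcal K[\omega_n]$ as $\omega_n\to\omega$, but this is precisely what the difference lemmas were prepared for; the hypothesis $r\ge 4$ is essential because it carves out the window $7/2<s<r$ that simultaneously yields $C^3$ regularity in space and, after two spatial derivatives are expended on $U$, keeps the image inside $C(\mathbb T)$ for the claimed $C^1$-in-time regularity.
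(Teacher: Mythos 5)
Your argument is correct and follows essentially the same route as the paper: interpolate the $\dot{H}^1$ convergence of Lemma \ref{lem5.2} against the uniform $\dot{H}^r$ bound, use Sobolev embedding for the $C^3$ statement, pass to the limit in (C.1)--(C.2) via the difference estimates of Proposition \ref{prop2.5}, Lemma \ref{lemUTtheta}, Lemma \ref{lem3.11} and Proposition \ref{prop2.6}, and obtain $C^1$ regularity in time from the integrated equation. The only cosmetic difference is that the paper proves convergence of the right-hand side $F_{n,1}$ by showing it is Cauchy in $\dot{H}^0$ and then interpolating against its uniform $\dot{H}^{r-3}$ bound, whereas you apply the difference estimates for $U$ and $T$ directly at an intermediate level $s\in(7/2,r)$; both give the same uniform convergence needed to differentiate the limiting integral equation.
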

\begin{proof}
For $r\ge4$, by Sobolev embedding theorem and Corollary
\ref{note5.5}, we know $X\in
C\left([0,S];C^3\left(\mathbb{T}[0,2\pi]\right)\times\mathbb{R}\right)$
and $\tilde\theta_n\rightarrow \tilde\theta$ as $n\rightarrow\infty$
in $ C\left([0,S];C^3\left(\mathbb{T}[0,2\pi]\right)\right)\cap
C\left([0,S];\dot{H}^s\right)$, for $1\le s<r$.

 Since $g$ is
$C^1$ in the open ball $\dot{H}^1$, $ g(\Tilde\theta_n)\rightarrow
g(\Tilde\theta)\mbox{ as }n\rightarrow \infty$. So
$\hat{\theta}(1;t)=g(\Tilde\theta)$ and $\tilde\theta$ satisfy
(B.4). By Proposition \ref{prop2.5} and (\ref{mathcalF}), we see
that both $\{\gamma_n\}_{n=2}^{\infty}$ and
$\left\{\mathcal{F}[\omega_n]\gamma_n\right\}_{n=2}^{\infty}$ are
Cauchy sequences in
$C\left([0,S];H^1\left(\mathbb{T}[0,2\pi]\right)\right)$. Hence, it
allows us to
 pass to the limit as $n\rightarrow\infty$ in the equation
$$\left(I+\mathcal{F}[\omega_n]\right)\gamma_n=\frac{2\pi}{L_n}\theta_{n,\alpha\alpha},$$
  and obtain
$$\big(I+\mathcal{F}[\omega]\big)\gamma=\frac{2\pi}{L}\theta_{\alpha\alpha}.$$
By  Proposition \ref{prop2.5} again , we have $\gamma\in
C\left([0,S];H^{r-2}\left(\mathbb{T}[0,2\pi]\right)\right)$. We also
have
\begin{equation} \label{limit_theta}\Tilde\theta_n(\alpha,t)=P_n
\theta_0(\alpha)+ \int_0^tF_{n,1}\big(X_n(t')\big)dt'.
\end{equation} From Lemma \ref{lemUTtheta},  it follows that
$\left\{F_{n,1}\right\}_{n=2}^\infty$ is a Cauchy sequence in
$C\left([0,S];\dot{H^0}\right)$. Replacing $r+1$ by $r-3$ and
$\tilde\theta_n$ by $F_{n,1}$ in (\ref{interpolation}) with the
uniform bound of $F_{n,1}$ in $\dot{H}^{r-3}$, we see
$\left\{F_{n,1}\right\}_{n=2}^\infty$ is a Cauchy sequence in
$C\left([0,S];\dot{H^s}\right)$ for $0\le s<r-3$. Hence, we take the
limit in (\ref{limit_theta}), yielding
\begin{eqnarray*}
\Tilde\theta(\alpha,t)=\mathcal{Q}_1\theta_0(\alpha)+\int_0^tF^1\big(X(t')\big)dt',
\end{eqnarray*}
where $F^1$ is the right-hand side of the first equation in (B.1).
This is differentiable in time, giving $\Tilde\theta_t={F}^1(X)\in
C\left([0,S];C\left(\mathbb{T}[0,2\pi]\right)\right)$. Similarly,
$L$ satisfies the second equation of (B.1) and $L_t\in C[0,S]$.
Thus, $X$ is a classical solution to (B.1), (B.3) and (B.4) with
initial condition (\ref{2.4}).
\end{proof}
\begin{lemma}
\label{lem5.4} For $r\ge3$, there exists sufficiently small  ball
size $\epsilon$ of $\mathcal{B}$ such that if $X^{(1)}\in
\mathcal{V}$
 and $X^{(2)}\in \mathcal{V}$ are solutions to
 the initial value problem (B.1), (B.3) and (B.4) with initial condition (\ref{2.4})
for the interval of time $[0,S]$ with any $S>0$, and
 the corresponding initial data $X^{(1)}(\alpha,0)\in \mathcal{V}$
 and $X^{(2)}(\alpha,0)\in \mathcal{V}$,
 then for $0\leq t\leq S$,
\begin{multline*}
\left\|\Tilde\theta^{(1)}(\cdot,t)-
\Tilde\theta^{(2)}(\cdot,t)\right\|_1+\left|L^{(1)}(t)-L^{(2)}(t)\right|\\\leq
\left(\left\|\Tilde\theta^{(1)}(\cdot,0)-
\Tilde\theta^{(2)}(\cdot,0)\right\|_1+\left|L^{(1)}(0)-L^{(2)}(0)\right|\right)\exp\{Bt\}.\end{multline*}
\end{lemma}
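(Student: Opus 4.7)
The plan is to mimic the Cauchy-sequence argument of Lemma \ref{lem5.2}, but applied to two exact solutions of the limiting system rather than to Galerkin truncations; this removes the $O(1/n)$ source term and leaves a clean exponential bound. Introduce the difference energy
\[ E_{12}(t) = \tfrac{1}{2} \int_0^{2\pi} \bigl( D(\tilde\theta^{(1)} - \tilde\theta^{(2)}) \bigr)^2 d\alpha + \bigl(L^{(1)}(t) - L^{(2)}(t)\bigr)^2, \]
and write $\tilde\theta_{12} := \tilde\theta^{(1)} - \tilde\theta^{(2)}$, $L_{12} := L^{(1)} - L^{(2)}$. Differentiating in $t$, substituting the evolution equations (B.1), and integrating by parts once in $\alpha$ yields a sum of terms whose structure is exactly $I_1+I_2+\frac{d}{dt}(L_{12})^2$ as in the proof of Lemma \ref{lem5.2}, only now without any Galerkin projectors $P_n,P_m$, so the troublesome $1/n$ contribution disappears.

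In each term I would isolate the leading-order parabolic piece using the decomposition $U^{(i)} = \frac{2\pi^2 \sigma}{(L^{(i)})^2} \mathcal{H}[\theta^{(i)}_{\alpha\alpha}] + R^{(i)}$ furnished by Lemma \ref{lemUTtheta}. The dominant contribution then comes from
\[ \frac{4\pi^3\sigma}{L^{(2)}(L^{(1)})^2} \int_0^{2\pi} D\tilde\theta_{12}\, D^2 \mathcal{H}[\tilde\theta_{12,\alpha\alpha}] d\alpha = -\frac{4\pi^3\sigma}{L^{(2)}(L^{(1)})^2} \|\tilde\theta_{12}\|_{5/2}^2, \]
which is strictly dissipative. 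The cross-terms produced by the $L^{(1)}\!-\!L^{(2)}$ coefficient split, together with the remainder pieces $R^{(i)}$ and the $T^{(i)}(1+\theta^{(i)}_\alpha)$ terms, are estimated by the difference inequalities in Lemma \ref{lemUTtheta} (specifically the bounds on $U^{(1)}-U^{(2)} - \frac{2\pi^2}{(L^{(1)})^2}\mathcal{H}[\theta^{(1)}_{\alpha\alpha}] + \frac{2\pi^2}{(L^{(2)})^2}\mathcal{H}[\theta^{(2)}_{\alpha\alpha}]$ and on $T^{(1)}-T^{(2)}$) and the ambient bound $\|\tilde\theta^{(i)}\|_r \le \epsilon$ from $X^{(i)}\in\mathcal{V}$. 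This produces contributions of the form $C\epsilon\, E_{12}^{1/2}\|\tilde\theta_{12}\|_2 + C\epsilon\, E_{12}$. The same ingredients applied to the $L$-equation give $\frac{d}{dt}L_{12}^2 \le C\epsilon\bigl(E_{12} + E_{12}^{1/2}\|\tilde\theta_{12}\|_2\bigr)$.

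Since $\mathcal{Q}_1\tilde\theta_{12} = \tilde\theta_{12}$, the Fourier modes $k=0,\pm 1$ of $\tilde\theta_{12}$ vanish, so $\|\tilde\theta_{12}\|_{3/2}^2 \le \tfrac{1}{4}\|\tilde\theta_{12}\|_{5/2}^2$ and $\|\tilde\theta_{12}\|_2^2 \le \tfrac{1}{4}\|\tilde\theta_{12}\|_{5/2}^2$. Choosing $\epsilon$ small enough lets the dissipation absorb every cross-term involving $\|\tilde\theta_{12}\|_2$ or lower-order norms, leaving
\[ \frac{dE_{12}}{dt} \le 2B\, E_{12}, \]
with $B$ depending only on the diameter of $\mathcal{V}$. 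Equivalently, $\frac{d}{dt}E_{12}^{1/2} \le B E_{12}^{1/2}$, and Gronwall's inequality yields $E_{12}(t)^{1/2} \le E_{12}(0)^{1/2} e^{Bt}$; combining the square roots of the two pieces of $E_{12}$ (with an inessential constant absorbed into $B$) gives the stated Lipschitz estimate. The main obstacle will be the careful bookkeeping in splitting $U^{(1)}-U^{(2)}$ and $T^{(1)}(1+\theta^{(1)}_\alpha) - T^{(2)}(1+\theta^{(2)}_\alpha)$ so that \emph{only} the dissipative $\|\tilde\theta_{12}\|_{5/2}^2$ survives at top order and all remaining contributions can be absorbed; this is precisely where the hypothesis $r\ge 3$ is used, since it allows the Lemma \ref{lemUTtheta} difference estimates in $H^{r-2}\hookrightarrow H^1$ to control every remainder in the topology of $E_{12}$.
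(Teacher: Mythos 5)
Your proposal is correct and follows essentially the same route as the paper: the paper's proof of Lemma \ref{lem5.4} defines the same difference energy $E_d=\frac12\|D(\tilde\theta^{(1)}-\tilde\theta^{(2)})\|_0^2+(L^{(1)}-L^{(2)})^2$, reruns the estimates of Lemma \ref{lem5.2} (now without the Galerkin projectors, hence no $1/n$ term), absorbs the $c\epsilon\|\tilde\theta^{(1)}-\tilde\theta^{(2)}\|_2^2$ remainder into the $\|\cdot\|_{5/2}^2$ dissipation for small $\epsilon$, and concludes by Gronwall that $E_d(t)\le E_d(0)e^{Bt}$. The only slips are cosmetic: the comparison constant for $\|\tilde\theta_{12}\|_2^2$ against $\|\tilde\theta_{12}\|_{5/2}^2$ is $\tfrac12$ rather than $\tfrac14$ (since the lowest surviving mode is $|k|=2$), which is immaterial because absorption uses smallness of $\epsilon$, exactly as in the paper.
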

\begin{proof} This proof is very similar to the proof of
Lemma  \ref{lem5.2}, and we re-use some notation. Define $E_d$, the
energy function for the difference of two solutions, by
$E_d^1+(L^{(1)}-L^{(2)})^2$. Here,
$$E_d^1=\frac{1}{2}\int_0^{2\pi}(D_\alpha( \Tilde\theta^{(1)}- \Tilde\theta^{(2)}))^2d\alpha.$$
We now wish to estimate how this energy changes over time.
\begin{eqnarray}
\frac{dE_d^1}{dt}&=&\int_0^{2\pi}D_\alpha(\Tilde\theta^{(1)}- \Tilde\theta^{(2)})D_\alpha (\tilde\theta_{t}^{(1)}-\tilde\theta^{(2)}_{t})d\alpha\nonumber\\
&=&\int_0^{2\pi}D_\alpha(\Tilde\theta^{(1)}-
\Tilde\theta^{(2)})D_\alpha^2\mathcal{Q}_1\big(\frac{2\pi}{L^{(1)}}
U^{(1)}-
\frac{2\pi}{L^{(2)}} U^{(2)}\big)d\alpha\nonumber\\
&&+\int_0^{2\pi}D_\alpha(\Tilde\theta^{(1)}-
\tilde\theta^{(2)})D_\alpha \mathcal{Q}_1\Big(\frac{2\pi}{L^{(1)}}
\big(T^{(1)}(1+\theta_{\alpha}^{(1)})\big)-\frac{2\pi}{L^{(2)}}
 \big(T^{(2)}(1+\theta^{(2)}_{\alpha})\big)\Big)d\alpha.\nonumber
\end{eqnarray}
  Using  the same estimates  as that in Lemma \ref{lem5.2}, we have
\begin{eqnarray}
\frac{dE_d^1}{dt}\leq-\frac{4\pi^3}{(L^{(2)})^3}\sigma\Big(\big\|\Tilde\theta^{(1)}-
\Tilde\theta^{(2)}\big\|_{5/2}^2-\big\|\Tilde\theta^{(1)}-
\Tilde\theta^{(2)}\big\|_{3/2}^2\Big) +
c\epsilon\big(\big\|\Tilde\theta^{(1)}-\Tilde\theta^{(2)}\big\|_2^2+E_d\big),\nonumber
\end{eqnarray}with $c$ depends on the diameter of  $\mathcal{V}$.
 We also have
\begin{eqnarray}
\frac{d(L^{(1)}-L^{(2)})^2}{dt}
&\leq&c\epsilon\big(\|\Tilde\theta^{(1)}-
\Tilde\theta^{(2)}\|_2^2+E_d\big),\nonumber
\end{eqnarray}with $c$ depends on the diameter of $\mathcal{V}$. As
what we did in Lemma \ref{lem5.2}, for sufficiently small
$\epsilon$, there exists a positive constant $B$ such that
\begin{eqnarray}
\frac{dE_d}{dt}\leq BE_d.\nonumber
\end{eqnarray}
We solve the differential inequality to see that
$$ E_d(t)\leq E_d(0)e^{Bt}.$$
This proves the theorem.
\end{proof}
Hence,  uniqueness follows from Lemma \ref{lem5.4}.
\begin{lemma}\label{uniqueness}  For $r\ge 3$, there exists sufficiently small  ball size $\epsilon$
of $\mathcal{B}$ such that solution $X=\big(\Tilde\theta,L
\big)\in\mathcal{V}$ to  (B.1) , (B.3) and (B.4) with initial
condition (\ref{2.4}) is unique in $\dot{H}^1\times\mathbb{R}$.
\end{lemma}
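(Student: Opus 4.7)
The plan is to deduce uniqueness as an immediate consequence of the Lipschitz-type difference estimate already established in Lemma \ref{lem5.4}. Suppose $X^{(1)}=(\tilde\theta^{(1)},L^{(1)})$ and $X^{(2)}=(\tilde\theta^{(2)},L^{(2)})$ are two solutions in $C([0,S];\mathcal{V})$ to (B.1), (B.3) and (B.4) with the same initial condition (\ref{2.4}). Then both lie in $\mathcal{V}$ (for $\epsilon$ small enough that Lemma \ref{lem5.4} applies), so I would apply Lemma \ref{lem5.4} directly to obtain
\begin{equation*}
\|\tilde\theta^{(1)}(\cdot,t)-\tilde\theta^{(2)}(\cdot,t)\|_1+|L^{(1)}(t)-L^{(2)}(t)|\le\Big(\|\tilde\theta^{(1)}(\cdot,0)-\tilde\theta^{(2)}(\cdot,0)\|_1+|L^{(1)}(0)-L^{(2)}(0)|\Big)e^{Bt}.
\end{equation*}
Since the initial data coincide, the right-hand side vanishes identically in $t$, which forces $\tilde\theta^{(1)}(\cdot,t)=\tilde\theta^{(2)}(\cdot,t)$ in $\dot{H}^1$ and $L^{(1)}(t)=L^{(2)}(t)$ for every $t\in[0,S]$. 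Since $S$ was arbitrary, this yields uniqueness in $\dot{H}^1\times\mathbb{R}$.

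There is essentially no obstacle here; the real work was already carried out in Lemma \ref{lem5.4}, where the energy-type estimate on the difference was controlled using the Lipschitz bounds of Lemma \ref{lemUTtheta} and Proposition \ref{prop2.5}, together with the smallness of $\epsilon$ to absorb the top-order term $\|\tilde\theta^{(1)}-\tilde\theta^{(2)}\|_{5/2}$ into a dissipative contribution. Nothing beyond a one-line Gronwall argument with zero initial data is required. One should just verify that the hypotheses of Lemma \ref{lem5.4} are met in the current setting, namely that both solutions remain in $\mathcal{V}$ on $[0,S]$, which is exactly the content of the assumption $X\in\mathcal{V}$ in the statement of the lemma being proved. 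Hence the proof is a direct corollary.
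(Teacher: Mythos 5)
Your proposal is correct and is exactly the paper's argument: the paper states that uniqueness follows directly from Lemma \ref{lem5.4}, i.e.\ applying the difference estimate with identical initial data so the right-hand side vanishes. You have merely spelled out the one-line Gronwall step that the paper leaves implicit.
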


\noindent{\bf Proof of Lemma \ref{lem2.16}:} This follows from
Lemmas \ref{lem5.2}, \ref{lem5.3}, \ref{uniqueness}, Corollary
\ref{note5.5} and Proposition \ref{coro5.6}.

\noindent{\bf Proof of Proposition \ref{prop2.18}:} Since the two
flows are both irrotational and incompressible, there exist velocity
potential $\phi_{i}$ for two fluids, $i=1,2$.
 Taking the derivative with respect to $t$ on both sides of (\ref{5.5}),
 we have
\begin{eqnarray*}
\frac{d\mathcal{S}(t)}{dt}&=&\frac{1}{2}\Im\int_0^{2\pi}\big(z_{\alpha}z_t^{\ast}-z_tz_\alpha^\ast\big)d\alpha\nonumber\\
&=&-\frac{L}{4\pi}\Re\int_0^{2\pi}\big(ie^{i\frac{\pi}{2}+i\alpha+i\theta(\alpha)}
z_t^{\ast}+z_t(-ie^{-i\frac{\pi}2-i\alpha-i\theta(\alpha)})\big)d\alpha\\
&=&-\frac{L}{2\pi}\int_0^{2\pi}(x_t,y_t)\cdot{\bf
n}d\alpha=-\frac{L}{2\pi}\int_0^{2\pi}Ud\alpha=-\frac{L}{2\pi}\int_0^{2\pi}\frac{\partial\phi_2}{\partial
{\bf n}}d\alpha.\nonumber
 \end{eqnarray*}
 By Green's second identity, we have
 \begin{eqnarray}
\frac{d\mathcal{S}(t)}{dt}=0.\nonumber
\end{eqnarray}
Hence the area of the bubble is invariant with time.

 Since $\big(\tilde\theta_n,L_n\big)$ converges to
$(\Tilde\theta,L)$ in
$C\left((0,S];\dot{H}^r\times\mathbb{R}\right)$ for any $S>0$, by
Proposition \ref{prop2.14} and
$\|P_n\theta_0\|_r\leq\|\mathcal{Q}_1\theta_0\|_r$, we have
\begin{eqnarray}\label{energytheta}\|\Tilde\theta(\cdot,t)\|_r=
\lim_{n\rightarrow\infty}\|\tilde\theta_n(\cdot,t)\|_r\leq
\|\mathcal{Q}_1\theta_0\|_re^{-\frac{1}{36}\sigma t}.
\end{eqnarray}

By (\ref{2.7}) and (\ref{energytheta}), the statement for
$\hat{\theta}(\pm1)$ hold.

Since the area is invariant with time, we have
\begin{eqnarray}\label{5.11}
\frac{L^2}{8\pi^2}\Im\int_0^{2\pi}\omega_\alpha\omega^{\ast}d\alpha
=\mathcal{S}\frac{1}{2\pi}\Im\int_0^{2\pi}\omega_{0,\alpha}\omega^{\ast}_0d\alpha.
\end{eqnarray}
(\ref{5.11}) gives us
\begin{align*}
\big(L^2-4\pi\mathcal{S}\big)\Im\int_0^{2\pi}\omega_\alpha\omega^{\ast}d\alpha+4\pi\mathcal{S}\Big(\Im\int_0^{2\pi}\omega_\alpha\omega^{\ast}d\alpha
-\Im\int_0^{2\pi}\omega_{0,\alpha}\omega^{\ast}_0d\alpha\Big)=0.
\end{align*}
 It implies that
\begin{equation*}
L-2\sqrt{\pi\mathcal{S}}=-\frac{L^2}{2\pi(L+2\sqrt{\pi\mathcal{S}})}\Big(\Im\int_0^{2\pi}\omega_\alpha\omega^{\ast}d\alpha
-\Im\int_0^{2\pi}\omega_{0,\alpha}\omega^{\ast}_0d\alpha\Big).
\end{equation*}
Hence, using $2\pi-1<L<2\pi+1$, we induce the following estimate:
\begin{equation*}
|L-2\sqrt{\pi\mathcal{S}}|\leq C\|\tilde\theta\|_1
\end{equation*}
with $C$ depending on $\mathcal{S}$ and the diameter of
$\mathcal{B}$.  From (\ref{energytheta}), the result for $L$
follows.

From (B.2), using (\ref{T0}) and $2\pi-1<L<2\pi+1$, we have
\begin{equation}\label{5.12}\left|\hat{\theta}(0;t)-\hat{\theta}_0(0)\right|\leq
C\int_0^t\|T(\cdot,t')\|_0\left(1+\|\tilde\theta(\cdot,t')\|_1\right)dt\leq
C\int_0^t\|\tilde\theta(\cdot,t')\|_3dt.\end{equation} Hence,
plugging estimates (\ref{energytheta}) into (\ref{5.12}), the result
for $\hat{\theta}(0)$ holds.

\section{appendix}

Proof of Lemma \ref{lem3.3} (\cite{AD1}):
\begin{proof}
We note that
\begin{displaymath}
D_\alpha^k q_1[\omega]=\int_0^1 t^k D^k
\omega_{\alpha}(t\alpha+(1-t)\alpha')dt~~,~~ D_{\alpha'}^k
q_1[\omega]=\int_0^1 (1-t)^k D^k
\omega_{\alpha}(t\alpha+(1-t)\alpha')dt.
\end{displaymath}
Then, using $2 \pi$ periodicity of $D^k \omega_\alpha$, we obtain
\begin{eqnarray}
&&\int_a^{a+2\pi}\Big|\int_0^1 t^k D^k
\omega_\alpha(t\alpha+(1-t)\alpha')dt\Big|^2d\alpha'\nonumber\\
&\leq&\int_a^{a+2\pi}\Big(\int_0^1| D^k
\omega_\alpha(t\alpha+(1-t)\alpha')(1-t)^{1/4}|^2dt\Big)\Big(\int_0^1 t^{2k}
(1-t)^{-1/2} dt \Big)d\alpha'\nonumber\\
&\leq& C\int_0^1\int_a^{a+2\pi}| D^k
\omega_\alpha(t\alpha+(1-t)\alpha')(1-t)^{1/4}|^2d\alpha'dt\nonumber\\
&\leq& C \int_0^1 \int_{a(1-t)+t\alpha}^{(a+2\pi)(1-t)+t\alpha}| D^k
\omega_\alpha(u)|^2(1-t)^{-1/2}dudt \nonumber\\
&\le& C \int_0^1 (1-t)^{-1/2} dt
\int_0^{2 \pi} |D^k \omega_\alpha (u)|^2 du
\leq C\|D^k \omega_\alpha\|^2_0. \nonumber
\end{eqnarray}
So $D_\alpha^k q_1 \in H^k [a, a+2\pi]$ in variable $\alpha'$ and
$\| D_\alpha^k q_1 [\omega] \|_0 \le C \| \omega_\alpha \|_k $ with $C$
only dependent on $k$. Again
\begin{eqnarray}
&&\int_a^{a+2\pi}\Big|\int_0^1 (1-t)^k D^k
\omega_\alpha(t\alpha+(1-t)\alpha')dt\Big|^2d\alpha'\nonumber\\
&\leq&\int_a^{a+2\pi}\Big(\int_0^1| D^k
\omega_\alpha(t\alpha+(1-t)\alpha')(1-t)^{1/4}|^2dt\Big)\Big(\int_0^1
(1-t)^{2k-1/2} dt\Big)d\alpha'\nonumber\\
&\leq& C\int_0^1\int_a^{a+2\pi}| D^k
\omega_\alpha(t\alpha+(1-t)\alpha')(1-t)^{1/4}|^2d\alpha'dt\nonumber\\
&\leq& C \int_0^1 \int_{a(1-t)+t\alpha}^{(a+2\pi)(1-t)+t\alpha}| D^k
\omega_\alpha(u)|^2(1-t)^{-1/2}dudt
\leq C\|D^k \omega_\alpha\|^2_0. \nonumber
\end{eqnarray}
So $D_{\alpha'}^k q_1 \in H^k [a, a+2\pi]$ in variable $\alpha'$ and
$\| D_{\alpha'}^k q_1 [\omega] \|_0 \le C \| \omega_\alpha \|_k $ with
$C$ only dependent on $k$.

We note that for $k \ge 0$
$$D_\alpha^k q_2[\omega]=-\int_0^1 t^k (1-t) D^k
\omega_{\alpha\alpha}(t\alpha+(1-t)\alpha')dt,$$
$$D_{\alpha'}^k q_2[\omega]=-\int_0^1 (1-t)^{k+1} D^k
\omega_{\alpha\alpha}(t\alpha+(1-t)\alpha')dt.$$
Similar arguments as above leads to the stated bounds for $q_2$.

From symmetry of $q_1$, $q_2$ in $\alpha$ and $\alpha'$, clearly the same
results hold with respect to $\alpha$ instead of $\alpha'$ integration.
\end{proof}

Proof of Lemma \ref{lem3.8} (\cite{AD1}):
\begin{proof}
We begin by taking $r-2$ derivatives of $\mathcal{K}[\omega] f$.
\begin{eqnarray}
D^{r-2}_\alpha \mathcal{K}[\omega] f (\alpha) &=&D^{r-2}_\alpha
\frac{1}{2\pi i}\int_{\alpha-\pi}^{\alpha+\pi}
 f(\alpha')\Big{[}\frac{1}{\omega(\alpha)-z(\alpha')}-\frac{1}{2\omega_{\alpha}(\alpha')}\cot{\frac{1}{2}(\alpha-\alpha')}\Big{]}d\alpha'\nonumber\\
 &=& \frac{1}{2\pi i}\int_{\alpha-\pi}^{\alpha+\pi}
 f(\alpha')D^{r-2}_\alpha\Big{[}\frac{1}{\omega(\alpha)-\omega(\alpha')}
-\frac{1}{2\omega_{\alpha}(\alpha')}\cot{\frac{1}{2}(\alpha-\alpha')}\Big{]}d\alpha'\nonumber\\
 &=& \frac{1}{2\pi i}\int_{\alpha-\pi}^{\alpha+\pi}
 f(\alpha')D^{r-2}_\alpha\Big{[}\frac{1}{\omega(\alpha)-\omega(\alpha')}
-\frac{1}{\omega_{\alpha}(\alpha')(\alpha-\alpha')}\Big{]}d\alpha'\nonumber\\
 &&-\frac{1}{2\pi i}\int_{\alpha-\pi}^{\alpha+\pi}
 \frac{f(\alpha')}{2\omega_{\alpha}(\alpha')}D^{r-2}_\alpha
l\big(\frac{1}{2}(\alpha-\alpha')\big)d\alpha'\nonumber\\
 &=& P_1+P_2.\nonumber
\end{eqnarray}
Since the function $l(\beta)$ is analytical for $-\frac{\pi}{2}\leq
\beta \leq \frac{\pi}{2}$, it is easy to have
\begin{displaymath}
\|P_2\|_0\leq \frac{C}{L} \|f\|_0, \mbox{ where C depends on } r.
\end{displaymath}
Let us see $P_1$.
\begin{eqnarray}
P_1&=&  \frac{1}{2\pi i}\int_{\alpha-\pi}^{\alpha+\pi}
\frac{ f(\alpha')}{\omega_{\alpha}(\alpha')}D^{r-2}_\alpha\Big{[}
\frac{\omega_{\alpha}(\alpha')}{\omega(\alpha)-\omega(\alpha')}-\frac{1}{\alpha-\alpha'}\Big{]}d\alpha'\nonumber\\
&=&  \frac{1}{2\pi i}\int_{\alpha-\pi}^{\alpha+\pi} \frac{
f(\alpha')}{\omega_{\alpha}(\alpha')}D^{r-2}_\alpha\Big(\frac{q_2[\omega](\alpha',\alpha)}{q_1[\omega](\alpha',\alpha)}\Big)
d\alpha'.\nonumber
\end{eqnarray}
(\ref{3.1}) implies that $\big|q_1[\omega](\alpha,\alpha')\big|\geq
\frac{1}{4}$. So by  Lemma \ref{lem3.5}, we have
\begin{displaymath}
\|P_1\|_0 \leq  \frac{C_1}{L} \|f\|_0
\exp{(\frac{C_2}{L}\|\omega_\alpha\|_{r-1})}.
\end{displaymath}
Hence first result follows.
Taking $\alpha$-derivative $r-1$ times $\mathcal{K}[\omega] f$ and integrating
by parts once,
\begin{eqnarray*}
D^{r-1}_\alpha \mathcal{K}[\omega] f (\alpha) &=&D^{r-2}_\alpha
\frac{1}{2\pi i}\int_{\alpha-\pi}^{\alpha+\pi}
D_{\alpha'}\Big(\frac{ f(\alpha')}{\omega_\alpha(\alpha')}\Big)
\Big{[}\frac{\omega_\alpha(\alpha)}{\omega(\alpha)-\omega(\alpha')}-
\frac{1}{2}\cot{\frac{1}{2}(\alpha-\alpha')}\Big{]}d\alpha'\nonumber\\
&=&\frac{1}{2\pi i}\int_{\alpha-\pi}^{\alpha+\pi}
D_{\alpha'}\Big(\frac{ f(\alpha')}{\omega_\alpha(\alpha')}
\Big)D^{r-2}_\alpha\Big{[}\frac{\omega_\alpha(\alpha)}{\omega(\alpha)-\omega(\alpha')}
-\frac{1}{2}\cot{\frac{1}{2}(\alpha-\alpha')}\Big{]}d\alpha'\nonumber\\
&=& -\frac{1}{2\pi
i}\int_{\alpha-\pi}^{\alpha+\pi}D_{\alpha'}\Big(\frac{
f(\alpha')}{\omega_\alpha(\alpha')}\Big)
D^{r-2}_\alpha \left ( \frac{q_2 [\omega] (\alpha, \alpha') }{
q_1 [\omega] (\alpha, \alpha') } \right ) d\alpha' \\
&&-\frac{1}{2\pi i}\int_{\alpha-\pi}^{\alpha+\pi}
D_{\alpha'}\Big(
\frac{f(\alpha')}{2\omega_{\alpha}(\alpha')}\Big)D^{r-2}_\alpha
l\big(\frac{1}{2}(\alpha-\alpha')\big)d\alpha'.\nonumber
\end{eqnarray*}
Using Lemma \ref{lem3.5}, the
the second inequality follows from Cauchy-Schwartz inequality after
noting that $ \| D \left (\frac{f}{\omega_{\alpha}} \right ) \|_0
\le C \| f \|_1 \| \omega_\alpha \|_1 $
\end{proof}

Proof of Lemma \ref{lem3.10} (\cite{AD1}):
\begin{proof}
We begin by writing $[\mathcal{H},\psi]$ as an integral operator:
\begin{displaymath}
[\mathcal{H},\psi]
f(\alpha)=\frac{1}{2\pi}\int_{\alpha-\pi}^{\alpha+\pi}
f(\alpha')\big(\psi(\alpha')-\psi(\alpha)\big)\cot\Big(\frac{1}{2}(\alpha-\alpha')\Big)d\alpha'.
\end{displaymath}
We can write the kernel as
\begin{displaymath}
\Big(\frac{\psi(\alpha')-\psi(\alpha)}{\alpha-\alpha'}\Big)\Big((\alpha-\alpha')\cot\Big(\frac{1}{2}(\alpha-\alpha')\Big).
\end{displaymath}
The first part of this product is a divided difference, and the
 second part is an analytic function on the domain $[-\frac{\pi}{2},\frac{\pi}{2}]$. The lemma now follows
  from the Generalized Young's Inequality.
\end{proof}

\end{document}